\def\centerarcarrow[#1](#2)(#3:#4:#5)
\newcommand{\mb}[1]{\mathbb{{#1}}}
\newcommand{\mc}[1]{\mathcal{{#1}}}
\newcommand{\dd}{\mathrm{d}}
\newcommand{\E}{\mathbb{E}}
\newcommand{\1}{\textbf{1}}
\newcommand{\R}{\mathbb{R}}
\newcommand{\e}{\varepsilon}
\newcommand{\p}[1]{\mathbb{P}\left( #1 \right)}
\newcommand{\scal}[2]{\left\langle #1, #2 \right\rangle}
\DeclareMathOperator{\vol}{vol}
\DeclareMathOperator{\inter}{int}
\DeclareMathOperator{\supp}{supp}
\def\thm@space@setup{%
  \thm@preskip=12pt plus 0pt minus 0pt
  \thm@postskip=0pt plus 0pt minus 0pt
}
\xpatchcmd{\proof}{6\p@\@plus6\p@\relax}{\z@skip}{}{}
\newtheorem{theorem}{Theorem}
\newtheorem{problem}{Problem}
\newtheorem{lemma}[theorem]{Lemma}
\newtheorem{corollary}[theorem]{Corollary}
\theoremstyle{remark}
\newtheorem{remark}[theorem]{Remark}
\theoremstyle{definition}
\title{Slicing $\ell_p$-balls reloaded: stability, planar sections in $\ell_1$}
\author{Giorgos Chasapis}
\address{(G.C. \& T.T.) Carnegie Mellon University}
\email{\{gchasapi,ttkocz\}@andrew.cmu.edu}
\author{Piotr Nayar}
\thanks{P.N. was supported by the National Science Centre, Poland, grant 2018/31/D/ST1/01355}
\address{(P.N.) University of Warsaw}
\email{nayar@mimuw.edu.pl}
\author{Tomasz Tkocz}
\thanks{TT's research supported in part by NSF grant DMS-1955175.}
\date{\today}
\begin{document}

\begin{abstract} 
We show that the two-dimensional minimum-volume central section of the $n$-dimensional cross-polytope is attained by the regular $2n$-gon. We establish stability-type results for hyperplane sections of $\ell_p$-balls in all the cases where the extremisers are known. Our methods are mainly probabilistic, exploring connections between negative moments of projections of random vectors uniformly distributed on convex bodies and volume of their sections.
\end{abstract}

\maketitle

{\footnotesize
\noindent {\em 2020 Mathematics Subject Classification.} Primary 52A40; Secondary 52A20.

\noindent {\em Key words. Cross-polytope, convex bodies, volumes of sections, stability, $p$-norm, negative moments.} 
}
\bigskip

\section{Introduction}

 For $p > 0$ let  $B_p^n=\{(x_1,\ldots, x_n) \in \R^n: \sum_{i=1}^n |x_i|^p \leq 1\}$ be the unit ball in the standard $\ell_p^n$ norm. The problem of determining $k$-dimensional sections of $B_p^n$ of maximal and minimal volume  proved to be notoriously difficult and 
has attracted significant attention over the past few decades, notably prompting development of several important analytic, geometric and probabilistic techniques. It originated in the context of the sections of the cube from questions in geometry of numbers (see, e.g. \cite{H79, V79}).

Conspicuously, Fourier analytic methods have played a prominent role in these developments, starting perhaps with Ball's solution \cite{B86} to maximal volume hyperplane sections of the cube, and significantly advanced in the many works that followed. We refer to Koldobsky's monograph \cite{K}. In its comprehensive introduction we find the following elementary formula 
\begin{equation}\label{eq:kol}
\vol_{n-1}(K \cap a^\perp) = \frac12\lim_{\e \to 0+} \e\int_K |\scal{x}{a}|^{-1+\e} \dd x
\end{equation}
for the volume of the section of an origin-symmetric star body $K$ in $\R^n$ by the hyperplane $a^\perp$ perpendicular to a unit vector $a$ in $\R^n$. This formula can perhaps be traced back to Kalton and Koldobsky's paper \cite{KalKol}, where it appears in the context of embeddings into $L_p$-spaces with negative $p$ and the connection to intersection bodies (significant in the full resolution of the famous Busemann-Petty problem, see \cite{GKS, Lu, Zh}). 

This formula can be seen as a starting point and inspiration of the present paper. Probabilistically, the right hand side of \eqref{eq:kol}, after normalising, is the limit of the negative moments $\E|\scal{X}{a}|^{-1+\e}$ of the marginal $\scal{X}{a}$ of a random vector $X$ uniformly distributed on $K$. Since plainly $\frac{\e}{2}\int_{\R} |t|^{-1+\e}f(t)\dd t \to f(0)$ as $\e \to 0+$ for a (say bounded and continuous) density $f$ on $\R$, we get the left hand side. This point of view naturally connects the problem of extremal volume sections of convex bodies with Khinchin-type inequalities for negative moments (for the latter, in the context of the cube, we refer to the recent work \cite{CKT}). Here we employ the same idea to sharpen all the known results for extremal volume hyperplane sections of $\ell_p$-balls.

\subsection*{Notation}
We try to follow standard notation used in probability and convex geometry. For convenience we try to recall or introduce it as we move along but we also summarise most of it here. By a convex body $K$ in $\mathbb{R}^n$ we mean a compact convex set with non-empty interior. We denote by $\vol_n(A)$ the $n$-dimensional Lebesgue measure of a measurable set $A$ in $\mathbb{R}^n$, whereas $\vol_H$ will stand for the Lebesgue $k$-dimensional measure on a $k$-dimensional subspace $H$ of $\R^n$ (instead of writing $\vol_H$ we shall often write $\vol_k$, where $k$ is the dimension of $H$, if it is clear what $H$ is in a given context). For a vector $x=(x_1,\dots, x_n)$ in $\R^n$, $|x| = (\sum_{j=1}^n x_j^2)^{1/2}$ denotes its Euclidean norm, $\scal{x}{y} = \sum_{j=1}^n x_jy_j$ is the standard inner product of two vectors $x$ and $y$ in $\R^n$ and, as usual, $(e_j)_{1 \leq j \leq n}$ is the standard basis of $\R^n$, thus $\scal{e_j}{e_k} = \delta_{jk}$. The orthogonal complement of a subspace $H$ in $\R^n$ is denoted by $H^\perp$ and for a vector $a$ in $\R^n$, $a^\perp = \{x \in \R^n, \scal{x}{a}=0\}$ is the hyperplane with normal $a$. For $p > 0$, $B_p^n=\{x \in \R^n: \sum_{i=1}^n |x_i|^p \leq 1\}$ is the unit $\ell_p$-ball. In particular, $B_2^n$ is the unit Euclidean ball and its boundary, the $(n-1)$-dimensional unit sphere is denoted by $S^{n-1} = \partial B_2^n = \{x \in \R^n, |x| = 1\}$. When $p=\infty$, $B_\infty^n = [-1,1]^n$ is the $n$-dimensional unit cube and its dilate of volume $1$ is denoted by $Q_n = \frac{1}{2}B_\infty^n = [-\frac12,\frac12]^n$. The Minkowski functional (gauge function) associated with a convex body $K$ will be denoted by $\| \cdot \|_K$.

\subsection*{Our results}
It remains an open problem to determine $k$-dimensional sections of $B_p^n$ of extremal volume: the minimal ones when  $2 \leq k \leq n-2$, $0 < p < 2$, and maximal ones when  $2 \leq k \leq n-1$, $2 < p < \infty$. This paper is twofold. First, we take on this question in the case of the cross-polytope and two-dimensional sections, so for $p=1$ and $k=2$. Second, we establish stability-type results for the hyperplane sections in all of the cases where the extremisers are known. Our bounds on deficits are sharp modulo multiplicative constants.

\subsubsection*{Cross-polytope}

Our first main result is the following theorem about minimal volume two-dimensional central sections of the cross-polytope $B_1^n$. 

\begin{theorem}\label{thm:p=1}
Let   $n \geq 3$. For every $2$-dimensional subspace $H$ of $\R^n$ one has
\[
	 \vol_2(B_1^n \cap H) \geq \frac{n^2\sin^3\left(\frac{\pi}{2n}  \right)}{\cos\left( \frac{\pi}{2n} \right)}.
\]
Moreover, if the equality holds, then $B_1^n \cap H$ is isometric to a regular $2n$-gon in $\R^2$. The minimum is achieved for $H=T(\R^2)$, with $Tx=(\scal{v_1}{x}, \ldots, \scal{v_n}{x})$ and $v_k = (\cos(\frac{k\pi}{n}),\sin(\frac{k\pi}{n}))$, $k=1,\ldots,n$. The minimising subspace $H$ is unique, up to coordinate reflections and permutations.
\end{theorem}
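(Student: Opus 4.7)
My plan is to reduce the volume to a one-dimensional integral via polar coordinates on $H$, then optimize over admissible configurations by exploiting the Fourier expansion of $|\cos|$ together with the isotropy constraints.

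\textbf{Setup.} Let $u_i := P_H(e_i) \in H \cong \R^2$. Since $P_H$ is the orthogonal projection onto a $2$-dimensional subspace, the $u_i$ satisfy $\sum_{i=1}^n u_i \otimes u_i = I_H$. For $y \in H$ one has $\|y\|_1 = \sum_i |\scal{u_i}{y}|$, so this expression is the Minkowski functional of $B_1^n \cap H$ in $H$. Polar coordinates $y = r\,e(\theta)$ with $e(\theta) = (\cos\theta, \sin\theta)$ give
\[
\vol_2(B_1^n \cap H) = \frac{1}{2}\int_0^{2\pi}\frac{\dd\theta}{\phi(\theta)^2}, \qquad \phi(\theta) := \sum_{i=1}^n |\scal{u_i}{e(\theta)}|.
\]
Writing $u_i = r_i\,e(\alpha_i)$ with $\alpha_i \in [0, \pi)$, the isotropy identity is equivalent to the scalar constraint $\sum r_i^2 = 2$ together with the complex constraint $\sum r_i^2 e^{2i\alpha_i} = 0$.

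\textbf{Extremizer and explicit value.} Expanding
\[
|\cos\theta| = \frac{2}{\pi} + \frac{4}{\pi}\sum_{k=1}^\infty \frac{(-1)^{k+1}}{4k^2-1}\cos(2k\theta)
\]
yields $\phi(\theta) = \tfrac{2}{\pi}\sum_i r_i + \tfrac{4}{\pi}\sum_{k\geq 1}\tfrac{(-1)^{k+1}}{4k^2-1}\operatorname{Re}\bigl(e^{2ik\theta}\sum_i r_i e^{-2ik\alpha_i}\bigr)$. For the regular configuration $r_i = \sqrt{2/n}$, $\alpha_i = i\pi/n$, all inner sums with $k \not\equiv 0 \pmod n$ vanish, leaving a $\pi/n$-periodic $\phi$. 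A Dirichlet-kernel computation on the fundamental interval $\theta \in (-\pi/(2n), \pi/(2n))$ gives the closed form $\phi(\theta) = \sqrt{2/n}\,\cos\theta/\sin(\pi/(2n))$, from which one computes $\int_0^{2\pi}\phi^{-2}\dd\theta = 2n^2\sin^3(\pi/(2n))/\cos(\pi/(2n))$, matching the claimed area.

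\textbf{Main inequality.} To show no other configuration gives a smaller value, the plan is a two-step reduction: (i) show by a symmetrization / Jensen-type argument that the optimum is achieved with all $r_i$ equal to $\sqrt{2/n}$, and (ii) with equal radii, identify the regular angles as the unique configuration (modulo the coordinate symmetries of $B_1^n$) satisfying $\sum_i e^{2i\alpha_i} = 0$ and minimizing $\int\phi^{-2}\dd\theta$. The main obstacle is step (i): the constraint $\sum r_i^2 e^{2i\alpha_i} = 0$ is quadratic in the $r_i$ while $\phi$ is linear in them, so naively averaging two radii does not preserve the constraint. Overcoming this mismatch will likely require either a Lagrange multiplier analysis (showing the regular configuration is the unique critical point, using strict convexity of $x\mapsto x^{-2}$ to rule out non-regular local minima) or a detour through the dual zonotope $Z = P_H(B_\infty^n)$, whose area equals $4\sum_{i<j}|u_i \wedge u_j|$, combined with the $2$-dimensional Mahler inequality $\vol_2(B_1^n \cap H)\cdot\vol_2(Z)\geq 8$, with equality only at parallelograms (which occur precisely in the trivial case $n=2$).
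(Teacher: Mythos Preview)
Your setup is correct and matches the paper's reformulation (your $u_i$ are the paper's $v_i$, and the isotropy constraint $\sum u_i\otimes u_i = I$ is exactly the condition $T^\ast T = I_{2\times 2}$). The verification that the regular configuration attains the stated value is fine. However, the proposal has a genuine gap: you yourself identify the obstacle in step (i) and do not overcome it. Neither of your suggested detours works. The Lagrange-multiplier route is left entirely unexecuted, and the Mahler route cannot give a sharp result: Mahler's inequality $\vol_2(K)\vol_2(K^\circ)\ge 8$ has equality only at parallelograms, while the extremizer here is a regular $2n$-gon, so chaining Mahler with a (separately sharp) upper bound on $\vol_2(Z)$ cannot produce the correct constant.

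The idea you are missing, which is the heart of the paper's argument, is to decompose the polygon $K_v$ into the $2k$ triangles $T_j = \operatorname{conv}(0, F_j)$ cut out by the edges $F_j$, and then to compare not just the area pieces $A_j = \tfrac12\int_{C_j}\rho^2$ but also the pieces $I_j = \int_{C_j}\rho^{-1}$ of the \emph{inverse} radial integral. The latter is globally controlled by the constraint alone:
\[
\sum_j I_j = \int_{S^1}\rho^{-1} = 4\sum_i |u_i| \le 4\sqrt{n}\Bigl(\sum_i |u_i|^2\Bigr)^{1/2} = 4\sqrt{2n},
\]
which already captures the effect of the radii $r_i$ without any symmetrization in the $r_i$. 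Locally, an elementary computation on a single triangle gives the sharp bound $A_j I_j^2 \ge 4\sin^3\beta_j/\cos\beta_j$, where $2\beta_j$ is the angle at the origin. One then combines the pieces by H\"older,
\[
|K_v|^{1/3}\Bigl(\sum_j I_j\Bigr)^{2/3} \ge \sum_j A_j^{1/3}I_j^{2/3} \ge 4^{1/3}\sum_j \frac{\sin\beta_j}{\cos^{1/3}\beta_j},
\]
and finishes with Jensen for the strictly convex function $\psi(x)=\sin x/\cos^{1/3}x$ (using $\sum_j \beta_j = \pi$) together with the monotonicity of $k\psi(\pi/(2k))$ in $k$ to pass from $k\le n$ to $n$. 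Equality throughout forces equal $|u_i|$, equal $\beta_j$, and each triangle isoceles, which pins down the regular $2n$-gon up to the stated symmetries. This completely bypasses your step (i)/(ii) dichotomy: the Cauchy--Schwarz on $\sum|u_i|$ is what replaces the unavailable ``equal-radii'' reduction.
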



In essence, the argument relies on convexity of certain functions which arise from the radial function of a planar embedding of the cross-section $B_1^n \cap H$, after leveraging the fact that it is a polygon and breaking it up into triangles.

\subsubsection*{Stability}

Our second main result concerns dimension-free refinements of the known results for hyperplane sections, providing  sharp stability of the unique extremising hyperplanes.

\begin{theorem}\label{thm:stab}
There is a positive constant $c_p$ which depends only on $p$ such that for every $n\geq 1$ and every unit vector $a = (a_1, \dots, a_n)$ in $\R^n$ with $a_1 \geq a_2 \geq \dots \geq a_n \geq 0$, we have
\begin{align}
\label{eq:st-Bp-p<2-max}
\frac{\vol_{n-1}(B_p^n \cap a^\perp)}{\vol_{n-1}(B_p^{n}\cap e_1^\perp)} &\leq \left(a_1^p+(1-a_1^2)^{p/2}\right)^{-1/p}, \qquad 0 < p < 2,\\
\label{eq:st-Bp-p<2-min}
\frac{\vol_{n-1}(B_p^n \cap a^\perp)}{\vol_{n-1}(B_p^n \cap (\frac{e_1+\dots+e_n}{\sqrt{n}})^\perp)} &\geq 1 + c_p\sum_{j=1}^n (a_j^2-1/n)^2, \qquad 0 < p < 2,\\
\label{eq:st-Bp-p>2}
\frac{\vol_{n-1}(B_p^n \cap a^\perp)}{\vol_{n-1}(B_p^{n}\cap e_1^\perp)} &\geq 1  +  c_p|a - e_1|^2, \qquad 2 < p \leq \infty,\\
\label{eq:st-Q-max}
\frac{\vol_{n-1}(B_\infty^n \cap a^\perp)}{\vol_{n-1}(B_\infty^{n}\cap (\frac{e_1+e_2}{\sqrt{2}})^\perp)} &\leq 1 - c_\infty\left|a - \frac{e_1+e_2}{\sqrt{2}}\right|.
\end{align}
 Moreover, the dependence on the right hand side of each of these inequalities on the deficit quantity $\delta = \delta(a)$ is best possible, modulo the value of constants $c_p$.
\end{theorem}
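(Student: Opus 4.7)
The plan is to translate each of the four inequalities into a statement about the density at zero of a weighted sum of i.i.d.\ generalized Gaussians. Starting from \eqref{eq:kol} and using the Schechtman--Zinn probabilistic representation of the uniform measure on $B_p^n$, one obtains
\[
\vol_{n-1}(B_p^n \cap a^\perp) = c_{n,p}\, f_{Z_a}(0), \qquad Z_a = \sum_{i=1}^n a_i g_i,
\]
where $g_1,\dots,g_n$ are i.i.d.\ with density proportional to $e^{-|t|^p}$ and $c_{n,p}$ depends only on $n$ and $p$. Equivalently, by Fourier inversion,
\[
\vol_{n-1}(B_p^n \cap a^\perp) = \frac{c_{n,p}}{2\pi}\int_{\R}\prod_{i=1}^n\hat{g}_p(a_i t)\,\dd t,
\]
with $\hat g_p$ the characteristic function of the $p$-generalized Gaussian. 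All four inequalities thus reduce to quantitative bounds on $a\mapsto f_{Z_a}(0)$, which I would treat case by case, exploiting the qualitatively different behaviour of $\hat g_p$ for $p<2$, for $p>2$, and for $p=\infty$.

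For the smooth cases \eqref{eq:st-Bp-p<2-min} and \eqref{eq:st-Bp-p>2}, the extremising direction is a critical point of the section-volume functional on the sphere, with known uniqueness. I would compute the Hessian of $-\log f_{Z_a}(0)$ restricted to $S^{n-1}$ at the extremiser, express it via $(\log\hat g_p)''$ evaluated along the tangent direction, and verify a dimension-free lower bound on its smallest eigenvalue. A Taylor expansion with uniform control of the remainder---provided by moment estimates on $Z_a$ that are uniform in $n$---then yields the quadratic stability. The natural deficit quantities $\sum_j(a_j^2-1/n)^2$ and $|a-e_1|^2$ arise because the problem is invariant under the orthogonal transformations fixing the extremiser, so the Hessian is already diagonal in the corresponding symmetry basis.

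For \eqref{eq:st-Bp-p<2-max}, the factor $(a_1^p+(1-a_1^2)^{p/2})^{-1/p}$ signals a dimensional reduction: it equals $\vol_{n-1}(B_p^n\cap b^\perp)/\vol_{n-1}(B_p^n\cap e_1^\perp)$ for $b=(a_1,\sqrt{1-a_1^2},0,\dots,0)$. The inequality thus asserts that the functional increases when $(a_2,\dots,a_n)$ is concentrated into a single coordinate (preserving $a_1$ and $|a|$). I would prove this through the density representation, showing, after conditioning on $g_1$, that $f_{\sum_{i\geq 2}a_ig_i}(0)\leq f_{\sqrt{1-a_1^2}\,g_2}(0)$, a Khinchin-type comparison for linear combinations of i.i.d.\ $p$-generalized Gaussians in the range $p<2$. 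The resulting $2$-dimensional section volume is then computed explicitly to match the claimed factor.

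The main obstacle is \eqref{eq:st-Q-max}. The extremiser $(e_1+e_2)/\sqrt{2}$ for the cube is Ball's classical maximiser, and the \emph{linear} rather than quadratic deficit reflects a genuine non-smoothness of the section-volume functional at this point. Concretely, $\hat g_\infty(t)=\sin t/t$ decays only like $1/t$, so the Fourier integral is only conditionally convergent and its first-order response to perturbations of $a$ picks up boundary-type contributions which I expect to produce the linear behaviour. I would analyse the integral by splitting $t$ into a bulk region (where Taylor expansion contributes a smooth quadratic term) and tail regions carrying the conditional-convergence correction; the key estimate is a one-sided derivative bound for $a\mapsto\int\prod_i(\sin(a_it)/(a_it))\,\dd t$ in directions that activate the coordinates $a_3,\dots,a_n$. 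Finally, sharpness of each of the four bounds modulo constants is verified by plugging in explicit one-parameter families of unit vectors converging to the extremiser and matching the leading-order asymptotics of the deficit.
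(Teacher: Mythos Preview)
Your starting framework---translating sections into the density at zero $f_{Z_a}(0)$ of a weighted sum of i.i.d.\ $p$-generalized Gaussians---matches the paper exactly, and your reading of \eqref{eq:st-Bp-p<2-max} as a two-coordinate Schur-majorization reduction is correct (though ``conditioning on $g_1$'' is not quite the right mechanism; one needs a peakedness comparison between $\sum_{i\geq 2}a_ig_i$ and $\sqrt{1-a_1^2}\,g_2$, which the paper gets from a Gaussian-mixture representation $g_i = \sqrt{V_i}\,G_i$ that turns the section volume into $\E(\sum a_j^2 \bar V_j)^{-1/2}$ and makes Schur-convexity in the $a_j^2$ transparent).

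The genuine gap is that your Hessian-plus-Taylor plan for \eqref{eq:st-Bp-p<2-min} and \eqref{eq:st-Bp-p>2} is purely \emph{local}, whereas the claimed bounds are global with dimension-free constants. For \eqref{eq:st-Bp-p>2}, consider $a=(1/\sqrt{n},\dots,1/\sqrt{n})$: then $|a-e_1|^2\to 2$, so you need $f_{Z_a}(0)\geq (1+c_p)\,f_{g_1}(0)$ uniformly in $n$, which is a CLT statement (the limit is the Gaussian density at $0$, strictly larger than the $p$-Gaussian density for $p>2$), not something a second-order expansion at $e_1$ can reach. The paper accordingly splits: for $a_1$ large it runs an argument close to yours (peakedness plus an explicit one-dimensional integral inequality), but for $a_1$ small it invokes Berry--Esseen to pin $f_{Z_a}(0)$ near the Gaussian value. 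For \eqref{eq:st-Bp-p<2-min}, a direct Hessian of $a\mapsto f_{Z_a}(0)$ at the diagonal runs into the same locality problem, compounded by the extremiser depending on $n$. The paper sidesteps this entirely: using the Gaussian-mixture formula and the identity $x^{-1/2}=\Gamma(1/2)^{-1}\int_0^\infty e^{-tx}t^{-1/2}\,\dd t$, it reduces to a sum $\sum_j\Lambda(tb_j)$ with $\Lambda(u)=\log\E e^{-u\bar V}$ and $b_j=a_j^2$, and then Taylor-expands $\Lambda$ \emph{in each coordinate separately} around $t/n$; the second derivative $\Lambda''$ admits a global lower bound of the form $c_1 e^{-c_2 t}$, which survives integration in $t$ and yields the full quadratic deficit without any locality restriction.

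For \eqref{eq:st-Q-max} your diagnosis of the linear deficit (non-smoothness at the maximiser) is right, but the proposed bulk/tail splitting of the Fourier integral is too vague to count as a plan. The paper's argument is structurally different: it first proves a local stability lemma near $(e_1+e_2)/\sqrt{2}$ by using the identity $\E|X+Y|^{-1}=\E\min\{|X|^{-1},|Y|^{-1}\}$ for independent rotationally invariant vectors in $\R^3$ (applied with $X=a_1\xi_1+a_2\xi_2$, $Y=\sum_{j\geq 3}a_j\xi_j$) to self-improve Ball's bound; then for $a$ bounded away from the extremiser it refines Ball's H\"older argument quantitatively; finally a Lipschitz estimate (from Busemann's theorem) bridges the remaining regime. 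None of these pieces is visible in your outline.
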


The common starting and main point of the proof of each of these results is an exact formula for $\vol_{n-1}(B_p^n \cap a^\perp)$ in terms of negative moments, as hinted in \eqref{eq:kol}. Another crucial feature common to all the proofs is that even though a random vector uniform on $B_p^n$ has dependent coordinates (except of course the cube case $p = \infty$), the dependence is \emph{mild} and the multiplicativity properties of the power function allow to replace $\scal{X}{a} = \sum a_jX_j$ in \eqref{eq:kol} with a weighted sum of \emph{i.i.d.} random variables, thanks to the well-known probabilistic representation of the uniform measure on $B_p^n$ balls in terms of the product measure with density proportional to $e^{-\sum |x_j|^p}$, see e.g. \cite{BGMN05}. The specific details of further arguments differ however, for instance as a result of the different nature of the extremising hyperplanes and resulting sections, among other things; see Section \ref{sec:heur} for an overview.

Sharpness of these results is explained in detail in the sections devoted to their proofs. 

In a recent independent work \cite{MR}, Melbourne and Roberto have addressed the stability of maximal hyperplane sections of the cube, obtaining a similar result to \eqref{eq:st-Q-max}, with explicit values of the numerical constants involved. Their approach is somewhat different and relies on developing a stability version of Ball's integral inequality.

For the sake of simplicity of our arguments, we have not made any attempts to optimise the values of the involved multiplicative constants $c_p$ (or for that matter even explicitly compute some values, except for the case of \eqref{eq:st-Bp-p>2} when $p = \infty$).

\subsection*{Organisation}
We begin in Section \ref{sec:known-res} with a short overview of the relevant known results spanning the last several decades. Our new result for the cross-polytope, Theorem \ref{thm:p=1}, is proved in Section \ref{sec:p=1,k=2}.   Section \ref{sec:form} is devoted to developing the probabilistic viewpoint on sections via negative moments which forms the backbone of the proofs of our stability results from Theorem \ref{thm:stab}. 
These results are then proved in Sections \ref{sec:Q} and \ref{sec:0pinf},  preceded with some heuristics gathered in Section \ref{sec:heur}. First, we deal with the cube and prove \eqref{eq:st-Bp-p>2} for $p=\infty$ in Section \ref{sec:Qmin}, as well as \eqref{eq:st-Q-max} in Section \ref{sec:Qmax}. Then, we consider the case $0 < p < 2$ and show \eqref{eq:st-Bp-p<2-max} in Section \ref{sec:p<2max}, followed by the proof of \eqref{eq:st-Bp-p<2-min} in Section \ref{sec:p<2min}. Finally, we present the proof of \eqref{eq:st-Bp-p>2} when $2 < p < \infty$ in Section \ref{sec:p>2}. We gather some concluding comments and possible future directions in Section~\ref{sec:conclusion}.

\subsection*{Acknowledgements}

We would like to thank Fedor Nazarov for helpful discussions and for sharing with us his proof of Theorem 1 as well as letting us include it in this paper. We are also indebted to the anonymous referee for many valuable comments which helped significantly improve the manuscript.

\section{Background: known results}\label{sec:known-res}

We begin by briefly recalling the known results.
Let $H_k$ be the hyperplane perpendicular to $e_1+\ldots+e_k$, where $(e_j)_{1 \leq j \leq n}$ is the standard basis of $\R^n$. The smallest hyperplane section of the cube $B_\infty^n$ is obtained by taking the hyperplane $H_1$, which was proved by Hadwiger in \cite{H72} and independently by Hensley in  \cite{H79}. This has been generalised to sections of arbitrary dimension by Vaaler in \cite{V79}. In \cite{B86} Ball showed that $H_2$ gives the hyperplane section of the cube with the largest volume, see also \cite{NP00} for a simpler proof. This important result led to the negative answer to the Busemann-Petty question in large dimensions, see \cite{B87}. The article \cite{B89} contains a study of maximal lower dimensional sections of the cube (the results are optimal if the dimension $k$ of the subspace divides $n$ or $k \geq n/2$). It is shown in \cite{Ole} that $H_2$ is not a maximising subspace for the volume of hyperplane sections of $B_p^n$ for $p \leq 24$. 
For a comprehensive survey of the results for the cube, we refer to Chapter~1 of \cite{Z}. For some  recent related results, we also refer to \cite{Al, Am, ALM, IT, Ko, KoKo, KR20, LPP, Po}.

Meyer and Pajor studied in \cite{MP88} the same problem for $B_p^n$ with finite $p$. They showed that for any dimension $k$, the set $B_p^k$ obtained by taking the standard coordinate subspace $\mathrm{span}\{e_1,\ldots,e_k\}$ is the maximal section for $1 \leq p \leq 2$ and the minimal section for $p \geq 2$. For extensions to $p \in (0,1)$ see \cite{B95, C92}. In \cite{MP88}, Meyer and Pajor also found the minimal hyperplane section of $B_1^n$, which is given by taking the hyperplane $H_n$. Koldobsky in \cite{K98} extended this result to $p \in (0,2)$. Later on several works treated the complex case (see \cite{KolZ, OP}) as well as a further generalisation to \emph{block subspaces} (see \cite{Esk}). 
We emphasise the fact that in all of the cases, the known extremising subspaces are also known to be unique (modulo symmetries).

We mention in passing that the analogous, dual question for extremal \emph{projections} of $B_p^n$ has also been considered. The problem is related to certain Khinchin-type inequalities, as explained in \cite{Ball95,BN02}. In particular, finding extremal projections of $B_1^n$ is equivalent to deriving optimal constants in the classical Khinchin inequality, which was done by Szarek in \cite{S76}, followed up by De, Diakonikolas and Servedio who developed a stability version in \cite{DDS}. The case $p \geq 2$ has been studied by Barthe and Naor in \cite{BN02}, where the authors showed that the smallest and the largest $(n-1)$-dimensional projections of $B_p^n$ are those onto the hyperplanes $H_1$ and $H_n$, respectively. Koldobsky, Ryabogin and Zvavitch in \cite{KRZ} developed a Fourier analytic approach. Chakerian and Filliman in \cite{ChF} found that the $2$-dimensional orthogonal projections of the cube $B_\infty^n$ of maximal volume are attained by regular $2n$-gons (the same extremiser as in our Theorem \ref{thm:p=1}) and, by McMullen's formula from \cite{McM}, this also gives $(n-2)$-dimensional projections of maximal volume. See \cite{Iv2} for recent results on lower dimensional projections of the cross-polytope $B_1^n$.
Paper \cite{ENT18} provides a different unified probabilistic approach to the volume and mean-width of central sections and projections and in addition to identifying the extremisers, also delivers Schur-convexity-type results.

\section{Two-dimensional central sections of the cross-polytope}\label{sec:p=1,k=2}

For the proof of Theorem \ref{thm:p=1} we first need to recall the direct elementary approach to sections viewed as linear embeddings.

\subsection{Sections via linear embeddings}\label{sec:initial}

Recall that $\|\cdot\|_K$ refers to the Minkowski functional of a convex body $K$ (if $K$ is symmetric, it is the norm whose unit ball is $K$).  We shall use the following standard lemma.

\begin{lemma}\label{lem:1}
Let $K$ be a convex body in $\R^n$ and let $T:\R^k \to \R^n$ be a linear map. Define $K_T=\{x \in \R^k: \ \|Tx\|_K \leq 1\}$. Then $K \cap T(\R^k) = T(K_T)$. Moreover, if $\,T$ is of full rank then
\[
	\vol_{T(\R^k)}(K \cap T(\R^k)) = \sqrt{\det(T^\ast T)} \vol_k(K_T).
\]
\end{lemma}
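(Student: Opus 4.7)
The plan splits naturally into two pieces matching the two assertions.

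For the set equality $K\cap T(\R^k) = T(K_T)$, the approach is to unpack the Minkowski functional. Since $K$ is a convex body (hence, as implicit throughout, containing the origin in its interior), one has the characterisation $y\in K \iff \|y\|_K\leq 1$. Thus $y\in K\cap T(\R^k)$ iff $y=Tx$ for some $x\in\R^k$ and $\|Tx\|_K\leq 1$, which is exactly the condition $x\in K_T$ followed by applying $T$. This is a one-line verification from the definitions.

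For the volume formula I would identify $T(\R^k)$ isometrically with $\R^k$ and reduce to the standard change-of-variables formula on $\R^k$. Pick an orthonormal basis $u_1,\ldots,u_k$ of $T(\R^k)$ and let $U$ be the $n\times k$ matrix with columns $u_j$, so that $U^\ast U = I_k$ and $U$ is a linear isometry from $\R^k$ onto $T(\R^k)$. Because $T$ has full rank and $T(\R^k)=\mathrm{span}(u_j)$, there is a unique $A\in\R^{k\times k}$ with $T=UA$, and this $A$ is invertible. Then $T^\ast T = A^\ast U^\ast U A = A^\ast A$, so $\sqrt{\det(T^\ast T)}=|\det A|$. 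Writing $K_T = A^{-1}(U^{-1}T(K_T))$ and applying the linear change of variables $x\mapsto Ax$ on $\R^k$, together with the fact that $U$ preserves $k$-dimensional volume on $T(\R^k)$, yields
\[
\vol_{T(\R^k)}\bigl(T(K_T)\bigr) \;=\; \vol_k\bigl(U^{-1}T(K_T)\bigr) \;=\; \vol_k\bigl(A(K_T)\bigr) \;=\; |\det A|\,\vol_k(K_T),
\]
which is the stated identity.

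There is really no substantive obstacle: the lemma is a routine packaging of the change-of-variables formula under a linear map whose domain and codomain have different dimensions. The only point worth being careful about is the convention that $\vol_{T(\R^k)}$ denotes the intrinsic $k$-dimensional Lebesgue measure on $T(\R^k)$ inherited from the ambient Euclidean structure; this is precisely what is matched by the isometric coordinates coming from the orthonormal basis $(u_j)$, and is the reason the formula involves $\sqrt{\det(T^\ast T)}$ rather than just $|\det T|$ (which is not defined for non-square $T$).
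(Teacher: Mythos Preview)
Your proof is correct. The set-equality argument is essentially identical to the paper's (both simply unpack the definition of the Minkowski functional and show the two inclusions).

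For the volume identity you take a slightly different route. The paper treats $H=T(\R^k)$ as a manifold parametrised by $T$ and directly quotes the volume-element formula $\dd\vol_H=\sqrt{\det((DT)^\ast DT)}\,\dd\vol_k$, noting that $DT=T$ since $T$ is linear. You instead factor $T=UA$ with $U$ an isometry onto $T(\R^k)$ and $A\in GL_k(\R)$, then reduce to the ordinary change-of-variables formula on $\R^k$. Your argument is a touch more self-contained---it does not assume the reader knows the parametrised-surface volume formula---and it makes transparent why the Jacobian factor is $\sqrt{\det(T^\ast T)}=|\det A|$ rather than a naive ``$|\det T|$''. The paper's version is terser but appeals to a standard formula; both are entirely routine and arrive at the same place.
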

\begin{proof}
For the first part, let us show two inclusions. If $y \in K \cap T(\R^k)$, then $y \in K$ and $y=Tx$ for some $x \in \R^k$. It follows that $\|Tx\|_K \leq 1$, so $x \in K_T$. Thus $y=Tx \in T(K_T)$. Now, if $y \in T(K_T)$, then $y=Tx$ for some $x$ satisfying $\|Tx\|_K \leq 1$. Thus $\|y\|_K \leq 1$, so $y \in K$. Since clearly $y \in T(\R^k)$, it follows that $y \in K \cap T(\R^k)$. 

For the second part, observe that one can treat $H=T(\R^k)$ as a manifold parameterised by $T$. Since $\vol_H$ is volume on this manifold, we have the well-known formula for the volume element, $\dd \vol_H = \sqrt{\det((D T)^\ast (D T)) } \ \dd\vol_k$, where $DT$ stands for the derivative of $T$. In our case $D T= T$ and so the assertion follows.      
\end{proof}

A straightforward application of the above lemma to the case of $K$ being the $B_p^n$ ball yields the following corollary.

\begin{corollary}\label{cor:1}
Suppose that $H$ is an image of $\R^k$ under a linear map $T:\R^k \to \R^n$ of full rank, given by $Tx=(\scal{v_1}{x}, \ldots, \scal{v_n}{x})$ for some vectors $v_1,\ldots, v_n \in \R^k$. Then 
\[
	\vol_H(B_p^n \cap H) = \det\left( \sum_{i=1}^n v_i \otimes v_i \right)^{1/2} \vol_k\left( \left\{ x \in \R^k: \ \sum_{i=1}^n |\scal{v_i}{x}|^p \leq 1 \right\} \right).
\]
\end{corollary}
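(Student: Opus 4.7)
The plan is to invoke Lemma \ref{lem:1} directly, with $K = B_p^n$, and verify that the two quantities appearing in its conclusion specialise to the expressions on the right-hand side above. Since the argument is almost entirely a matter of unpacking definitions, I do not expect any genuine obstacle; the only thing to be careful about is correctly identifying the matrix of $T$ and its Gram matrix $T^\ast T$.

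First I would recall that the Minkowski functional of $B_p^n$ is $\|y\|_{B_p^n} = \left(\sum_{i=1}^n |y_i|^p\right)^{1/p}$ for $y \in \R^n$ (this is homogeneous of degree $1$, vanishes at $0$, and satisfies $\|y\|_{B_p^n} \leq 1 \iff y \in B_p^n$, regardless of whether $p \geq 1$ or $0 < p < 1$). Applying this to $y = Tx = (\langle v_1, x\rangle, \dots, \langle v_n, x\rangle)$ gives
\[
\|Tx\|_{B_p^n} = \left(\sum_{i=1}^n |\langle v_i, x\rangle|^p\right)^{1/p},
\]
so the condition $\|Tx\|_{B_p^n}\le 1$ is equivalent to $\sum_{i=1}^n |\langle v_i, x\rangle|^p \leq 1$. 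Hence the set $K_T$ defined in Lemma \ref{lem:1} is precisely $\{x \in \R^k: \sum_{i=1}^n |\langle v_i, x\rangle|^p \leq 1\}$.

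Next I would compute $T^\ast T$. Viewing vectors in $\R^k$ as column vectors, the matrix of $T$ has $i$-th row equal to $v_i^\top$, so $T^\ast$ is the $k \times n$ matrix whose columns are $v_1, \dots, v_n$. Multiplying, one obtains $T^\ast T = \sum_{i=1}^n v_i v_i^\top = \sum_{i=1}^n v_i \otimes v_i$, and therefore $\sqrt{\det(T^\ast T)} = \det\bigl(\sum_{i=1}^n v_i \otimes v_i\bigr)^{1/2}$. Plugging both identifications into the volume formula of Lemma \ref{lem:1} yields exactly the claimed expression for $\vol_H(B_p^n \cap H)$, completing the proof.
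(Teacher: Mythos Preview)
Your proposal is correct and takes essentially the same approach as the paper, which simply states that the corollary is a straightforward application of Lemma \ref{lem:1} to $K = B_p^n$. You have spelled out the two identifications (of $K_T$ and of $T^\ast T = \sum_i v_i\otimes v_i$) that make this application go through, and both are right.
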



Here, as usual, $v \otimes v$ is the matrix $vv^\top$. Let us now assume that the map $T$ is an isometric embedding. This means that $\scal{x}{y}=\scal{Tx}{Ty}=\scal{x}{T^\ast T y}$, which gives the condition $T^\ast T = I_{k \times k}$, where $I_{k \times k}$ stands for the $k \times k$ identity matrix. If the mapping is written in the form $Tx=(\scal{v_1}{x}, \ldots, \scal{v_n}{x})$,  the condition $T^\ast T = I_{k \times k}$ rewrites as $\sum_{i=1}^n v_i \otimes v_i = I_{k \times k}$. Thus, finding extremal  $k$ dimensional sections of $K$ is equivalent to solving the following problem.


\begin{problem}\label{prob:1}
Maximise/minimise the volume of the set $K_T=\{x \in \R^k: \ \|Tx\|_K \leq 1\}$ under the constrain $T^\ast T = I_{k \times k}$. In the case of $K=B_p^n$, maximise/minimise the volume of the set 
\[
K_v = \left\{ x \in \R^k: \ \sum_{i=1}^n |\scal{v_i}{x}|^p \leq 1 \right\} \quad  \textrm{over} \quad v_1,\ldots,v_n \in \R^k, \ \sum_{i=1}^n v_i \otimes v_i = I_{k \times k}.
\] 
\end{problem}

\begin{remark}\label{rem:isometry}
Since the condition $T^\ast T=I_{k \times k}$ ensures that the map is an isometric embedding, the set $K_T$ in $\R^k$ in the above extremization problem is isometric to the section $K \cap T(\R^k)$.    
\end{remark}

\subsection{Proof of Theorem \ref{thm:p=1}}\label{sec:proof-p=1}

This proof was kindly communicated to us by Fedor Nazarov.
Recall that our goal is to minimise the volume of the set $K_v=\{x \in \R^2: \ \sum_{i=1}^n |\scal{v_i}{x}| \leq 1\}$ under the constraint $\sum_{i=1}^n v_i \otimes v_i=I_{2 \times 2}$. In general, the set $K_v$ is a convex symmetric $2k$-gon, $k\leq n$. We point out that some of the vectors $v_i$ might be zero, and some of them may be parallel.  While studying the geometry of $K_v$, one can assume that the vectors $v_i$ are non-parallel, since if for some $a_1,\ldots, a_l$, $i_1, \ldots, i_l$ and $v$ one has $v_{i_1}=a_1 v, \ldots,  v_{i_l}=a_l v$, then considering only one vector $\tilde{v}=\sum_{j=1}^l |a_{i_j}|v$ instead of the vectors $v_{i_j}$ will result in the same set. However, this operation in general affects the constraint $\sum_{i=1}^n v_i \otimes v_i=I_{2 \times 2}$.  

Let $\rho:S^1 \to (0,\infty)$, given by $\rho(\theta)=\left(\sum_{i=1}^n |\scal{v_i}{\theta}|\right)^{-1}$, be the radial function of $K_v$. One can assume that in our configuration there are at least two non-parallel vectors (otherwise the resulting set is an infinite strip and so its volume is infinite; in this case $\sum_{i=1}^n v_i \otimes v_i$ is of rank one, and the constraint is not satisfied). It is not hard to check that under this assumption the vertices of $K_v$ correspond exactly to directions $\theta$ perpendicular to $v_i$ for some non-zero $v_i$ (that is,  up to the changes of sign of $\scal{v_i}{\theta}$). Indeed, for points $x$ on the boundary of $K_v$ one has $\sum_{i=1}^n |\scal{v_i}{x}|=1$. If in a small neighborhood of $x$ all the signs of $\scal{v_i}{x}$ are fixed, this is a linear equation and the set of solutions is a line  which corresponds to $1$-dimensional faces of $K_v$. If on the other hand $x$ satisfies $\scal{v_i}{x}=0$ for some non-zero $v_i=(a,b)$ (if there are vectors parallel to $v_i$ we join them together as above), then within a small ball around $x=(s_0,t_0)$ there is a part of the boundary being a subset of the line of the form $\{(s,t): as+bt+As+Bt=1\}$ and a part being a subset of the line of the form $\{(s,t): -as-bt+As+Bt=1\}$. 
These two lines intersect each other at $x$. We shall show that they are non-parallel. If they were parallel, they would have to coincide and thus we would have $a+A=-a+A$ and $b+B=-b+B$, which gives $a=b=0$, contradiction.
Thus $x$ is an intersection of two non-parallel parts of the boundary and thus is a vertex of $K_v$. A simple consequence of these observations is that $K_v$ has at most $2n$ vertices.

Suppose that the boundary of $K_v$ consists of segments $F_j$, $j=1,\ldots,k$. Let $C_j$ be the corresponding segments of $S^1$, that is $\theta \in C_j$ if $\rho(\theta) \theta \in F_j$, and let $T_j=\textrm{conv}(0,F_j)$ be the corresponding triangle in $K_v$. We define $A_j = \frac12 \int_{C_j} \rho^2$ and $I_j = \int_{C_j} \rho^{-1}$. Suppose that the angle of $T_j$ at vertex $O=0$ has measure $2 \beta_j$, where $\beta_j \in (0,\pi/2)$. Note that $\sum_{j=1}^k \beta_j = \pi$. We shall need the following elementary lemma.

\begin{figure}[htb] 
  \centering
  \def\svgwidth{250pt}
  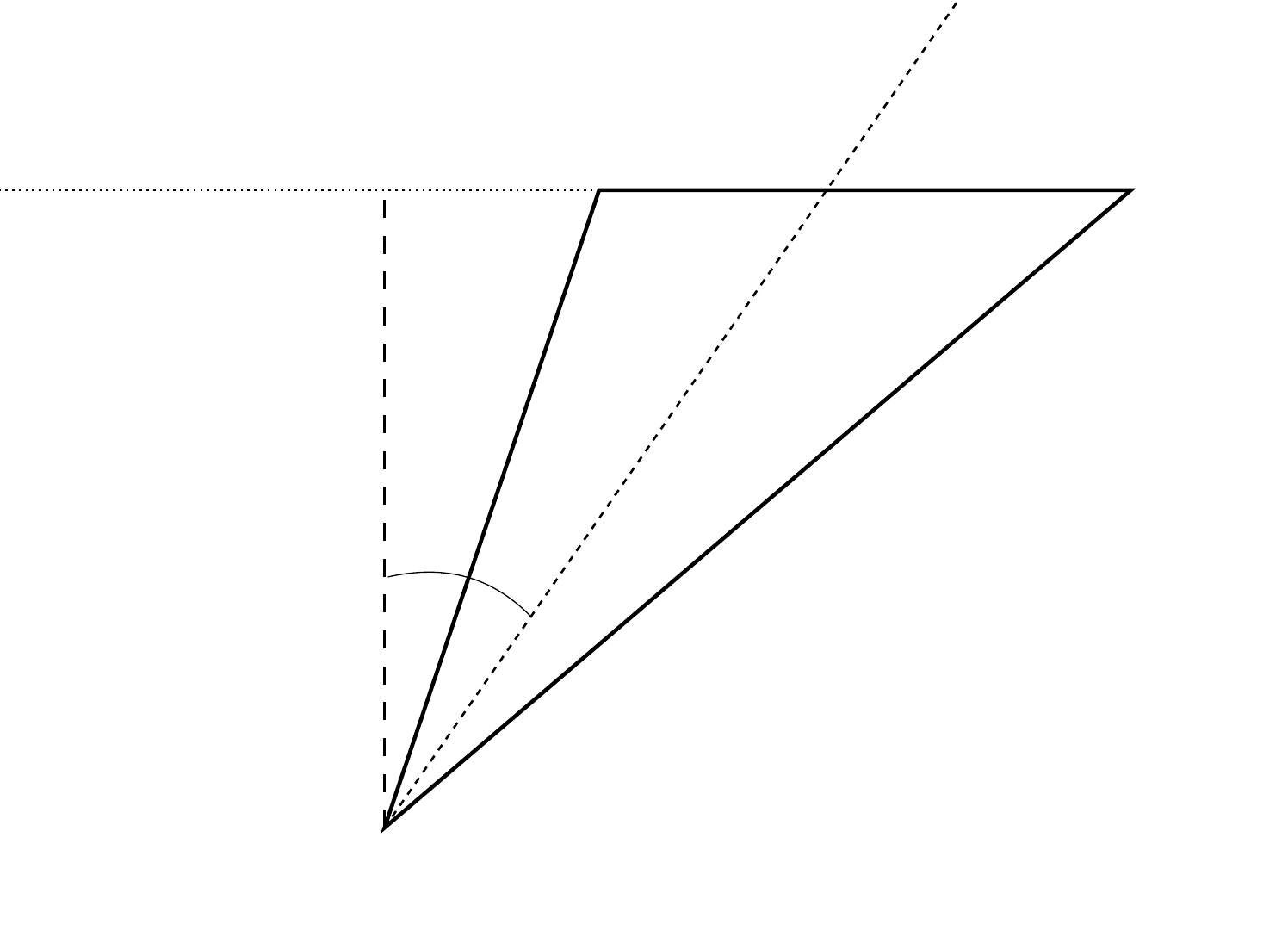
  \caption{One \emph{piece} of $K_v$: triangle $OLR$.} \label{triangle}
\end{figure}

\begin{lemma}\label{lem:1.1}
We have $A_j I_j^2 \geq \frac{4 \sin^3 \beta_j}{\cos \beta_j}$. 
\end{lemma}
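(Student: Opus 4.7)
The plan is to parameterise the triangle $T=OLR$ radially from $O$ and reduce the desired inequality to an elementary trigonometric identity. Drop the index $j$ for brevity. Place $O$ at the origin, let $h$ be the perpendicular distance from $O$ to the line through $L$ and $R$, and let $\alpha$ denote the signed angular deviation between this perpendicular and the bisector of the angle $\angle LOR$, so that the rays $OL$, $OR$ make angles $\beta+\alpha$ and $\beta-\alpha$ with the perpendicular direction respectively. Writing $u$ for the angle measured from this perpendicular, the radial function along the base of $T$ is simply $\rho(u)=h/\cos u$ on the interval $u\in[-(\beta+\alpha),\beta-\alpha]$, whose length is the apex angle $2\beta$.

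Elementary integration then yields
\[
    I = \int_{-(\beta+\alpha)}^{\beta-\alpha}\frac{\cos u}{h}\,du = \frac{\sin(\beta+\alpha)+\sin(\beta-\alpha)}{h} = \frac{2\sin\beta\cos\alpha}{h},
\]
\[
    A = \frac12\int_{-(\beta+\alpha)}^{\beta-\alpha}\frac{h^2}{\cos^2 u}\,du = \frac{h^2}{2}\bigl(\tan(\beta+\alpha)+\tan(\beta-\alpha)\bigr) = \frac{h^2 \sin 2\beta}{2\cos(\beta+\alpha)\cos(\beta-\alpha)}.
\]
A convenient feature is that $h$ cancels in the product $AI^2$. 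Combining these formulas with $\sin 2\beta = 2\sin\beta\cos\beta$ and the product identity $\cos(\beta+\alpha)\cos(\beta-\alpha) = \cos^2\beta-\sin^2\alpha$ gives
\[
    AI^2 = \frac{4\sin^3\beta\,\cos\beta\,\cos^2\alpha}{\cos^2\beta-\sin^2\alpha}.
\]

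Hence the target inequality $AI^2\ge 4\sin^3\beta/\cos\beta$ is equivalent to $\cos^2\beta\cos^2\alpha \ge \cos^2\beta-\sin^2\alpha$, which rearranges to the manifestly true statement
\[
    \sin^2\alpha\,\sin^2\beta \ge 0.
\]
Equality occurs iff $\alpha=0$, i.e.\ the perpendicular from $O$ bisects $\angle LOR$, equivalently $|OL|=|OR|$.

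The argument is essentially a direct computation, so I do not expect real obstacles. The one small subtlety will be the case where the perpendicular foot from $O$ to the line $LR$ falls outside the segment $LR$, so that one of $\beta\pm\alpha$ becomes negative; but the integrals and trigonometric identities used above persist unchanged for signed angles in $(-\tfrac{\pi}{2},\tfrac{\pi}{2})$, so this case requires no separate treatment.
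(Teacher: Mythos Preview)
Your proof is correct and essentially identical to the paper's: both parameterise the triangle radially from $O$ using the perpendicular distance $h$ and the offset angle $\alpha$ between the perpendicular and the bisector, compute $A$ and $I$ as elementary trigonometric integrals, and reduce the inequality to a manifestly nonnegative expression (the paper writes it as $\frac{1}{1-\tan^2\alpha\tan^2\beta}\ge 1$, you write it as $\sin^2\alpha\sin^2\beta\ge 0$, which are equivalent).
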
    
\begin{proof}
Let $OLR$ be one of our triangles $T_j$ and let $2\beta$ be the measure of the angle at vertex $O$. 
Let $h$ be the height of $OLR$ perpendicular to $LR$ and let $l$ be the bisector of $\angle LOR$. The directed angle from $h$ to $l$ will be denoted by $\alpha$. Let $\theta$ be the directed angle on $S^1$, where $\theta = 0$ corresponds to points on $h$. Clearly $\rho(\theta)=h/\cos \theta$.  We have
\begin{align*}
I_j &= \int_{\alpha-\beta}^{\alpha+\beta} \frac{\cos \theta}{h} \dd \theta = \frac1h[\sin(\alpha+\beta)-\sin(\alpha-\beta)], \\
A_j &= \frac12 h^2 \int_{\alpha-\beta}^{\alpha+\beta} \frac{1}{\cos^2 \theta} \dd \theta = \frac12 h^2 [\tan(\alpha+\beta)-\tan(\alpha-\beta)].     
\end{align*}
Thus,
\begin{align*}
	A_j I_j^2 & = \frac12 \left[ \frac{\sin(\alpha+\beta)}{\cos(\alpha+\beta)} - \frac{\sin(\alpha-\beta)}{\cos(\alpha-\beta)}  \right] \cdot [\sin(\alpha+\beta)-\sin(\alpha-\beta)]^2 =  \frac{2 \sin(2 \beta) \cdot \sin^2 \beta \cos^2 \alpha}{\cos(\alpha+\beta)\cos(\alpha-\beta)} \\
	& = \frac{4 \sin^3 \beta \cos \beta \cos^2 \alpha}{\cos^2 \alpha \cos^2 \beta - \sin^2 \alpha \sin^2 \beta} =  \frac{4 \sin^3 \beta}{\cos \beta} \cdot \frac{1}{1-\tan^2 \alpha \tan^2 \beta} \geq \frac{4 \sin^3 \beta}{\cos \beta}.
\end{align*}
\end{proof}

\begin{lemma}\label{lem:2}
The function $\psi(x)= \frac{\sin x}{(\cos x)^{1/3}}$ is strictly convex on $[0,\pi/2)$. In particular, the function $[0,\pi/2) \ni x \mapsto \psi(x)/x$ is non-decreasing  and thus the sequence $a_n =  \frac{n \sin\left(\frac{\pi}{2n}  \right)}{\cos^{1/3}\left( \frac{\pi}{2n} \right)}$ is non-increasing.
\end{lemma}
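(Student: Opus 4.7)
The plan is to prove strict convexity of $\psi$ by computing $\psi''$ directly and showing it is strictly positive on $(0,\pi/2)$; the other two claims then follow by standard soft arguments.

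First I will differentiate $\psi(x)=\sin x\,(\cos x)^{-1/3}$ twice. A short computation (using $3\cos^2 x+\sin^2 x = 2\cos^2 x+1$) gives
\[
\psi'(x)=\tfrac{1}{3}(2\cos^2 x+1)(\cos x)^{-4/3}.
\]
Differentiating once more and collecting powers of $\cos x$, the terms combine so that
\[
\psi''(x) = \frac{4\sin^3 x}{9\,(\cos x)^{7/3}}.
\]
The bracket simplification that makes this clean is $-3\cos^2 x+(2\cos^2 x+1)=\sin^2 x$; this is the only step that requires any care, and it is routine. Since $\psi''(x)>0$ on $(0,\pi/2)$ and $\psi''$ is continuous on $[0,\pi/2)$, $\psi$ is strictly convex on $[0,\pi/2)$.

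For the second claim, I will use the standard fact that for a convex function $f$ with $f(0)=0$, the ratio $f(x)/x$ is non-decreasing on $(0,a)$: given $0<x<y<a$, writing $x=\frac{x}{y}\,y+\bigl(1-\frac{x}{y}\bigr)\,0$ and applying convexity of $f$ yields $f(x)\le \frac{x}{y}f(y)$, i.e.\ $f(x)/x\le f(y)/y$. Since $\psi(0)=0$, this shows $x\mapsto\psi(x)/x$ is non-decreasing on $(0,\pi/2)$.

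Finally, for the sequence $a_n$, I will just evaluate at $x_n=\pi/(2n)$: since $x_n$ is decreasing in $n$ and $\psi(x)/x$ is non-decreasing in $x$, $\psi(x_n)/x_n$ is non-increasing in $n$. As
\[
\frac{\psi(x_n)}{x_n}=\frac{2n}{\pi}\cdot\frac{\sin(\pi/(2n))}{\cos^{1/3}(\pi/(2n))}=\frac{2}{\pi}\,a_n,
\]
the monotonicity of $a_n$ follows. I do not expect any serious obstacle: the only place where one has to be attentive is the algebraic simplification of $\psi''$, and even there the cancellation $-3\cos^2 x+2\cos^2 x+1=\sin^2 x$ makes the final expression manifestly positive.
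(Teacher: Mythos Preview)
Your proof is correct and follows essentially the same approach as the paper: both compute $\psi'(x)=\frac{2}{3}\cos^{2/3}x+\frac{1}{3}\cos^{-4/3}x$ (equivalently your $\frac{1}{3}(2\cos^2 x+1)\cos^{-4/3}x$) and then show it is strictly increasing, with the remaining claims following from $\psi(0)=0$ exactly as you argue. The only cosmetic difference is that you differentiate once more to obtain the clean closed form $\psi''(x)=\frac{4\sin^3 x}{9\cos^{7/3}x}$, whereas the paper substitutes $y=\cos^{2/3}x$ into $\psi'$ and checks that $2y+y^{-2}$ is strictly decreasing on $(0,1)$.
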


\begin{proof}
Observe that $\psi'(x)=\cos^{2/3} x+ \frac13 \sin^2 x \cos^{-4/3} x = \frac23 \cos^{2/3}x + \frac13 \cos^{-4/3}x$. It suffices to show that this function is strictly increasing. Taking $y=\cos^{2/3} x$ we see that this is equivalent to showing that $f(y)= 2 y+ y^{-2}$ is strictly decreasing $(0,1)$. This is true since $f'(y)=2(1-y^{-3})<0$ for $y \in (0,1)$. 

 The second part follows from the monotonicity of the slopes of convex functions and the fact that $\psi(0)=0$.
\end{proof}

We are now ready to prove Theorem \ref{thm:p=1}.
 
\begin{proof}[Proof of Theorem \ref{thm:p=1}]
We shall solve Problem \ref{prob:1}. Assume that $\sum_{i=1}^n v_i \otimes v_i = I_{2 \times 2}$ and that $K_v$ is a convex symmetric $2k$-gon, where $k \leq n$. Note that 
\[
	\int_{S^1} \rho(\theta)^{-1} \dd \theta = \sum_{i=1}^n \int_{S^1} |\scal{v_i}{\theta}| \dd \theta = 4 \sum_{i=1}^n |v_i| \leq 4 \sqrt{n} \sqrt{\sum_{i=1}^n |v_i|^2} = 4 \sqrt{2n},
\]
where in the last equality we use $\sum_{i=1}^n |v_i|^2 = \textrm{tr}\left(\sum_{i=1}^n v_i \otimes v_i \right)$.
Moreover, using H\"older's inequality, Lemma \ref{lem:1.1} and Lemma \ref{lem:2}, we get
\begin{align*}
	|K_v|^{\frac13} (4 \sqrt{2n})^{\frac23} &  \geq |K_v|^{\frac13} \left( \int_{S^1} \rho(\theta)^{-1} \dd \theta \right)^{\frac23} = \left(\sum_{j=1}^{2k} A_j\right)^{\frac13}\left(\sum_{j=1}^{2k} I_j\right)^{\frac23} \\
	&\geq \sum_{j=1}^{2k} A_j^{\frac13} I_j^{\frac23} \geq 4^{\frac13}\sum_{j=1}^{2k} \frac{\sin \beta_j}{\cos^{1/3} \beta_j}  \\
	& \geq 4^{\frac13} \cdot 2k \frac{\sin\left(\frac{1}{2k} \sum_{j=1}^{2k} \beta_j \right)}{\cos^{1/3}\left( \frac{1}{2k}\sum_{j=1}^{2k} \beta_j \right)} = 2 \cdot 4^{\frac13}  \cdot \frac{k \sin\left(\frac{\pi}{2k}  \right)}{\cos^{1/3}\left( \frac{\pi}{2k} \right)} \geq  2 \cdot 4^{\frac13}  \cdot \frac{n\sin\left(\frac{\pi}{2n}  \right)}{\cos^{1/3}\left( \frac{\pi}{2n} \right)}. 
\end{align*}
We arrive at $|K_v| \geq \frac{n^2\sin^3\left(\frac{\pi}{2n}  \right)}{\cos\left( \frac{\pi}{2n} \right)}$.

We now show that this bound is achieved for $K_v$ being a regular $2n$-gon. Let us consider $v_k=\sqrt{\frac{2}{n}}(\cos(\frac{k \pi}{n}), \sin(\frac{k \pi}{n}))$ for $k=1, \ldots, n$. It is easy to verify that $\sum_{i=1}^n v_i \otimes v_i = I_{2 \times 2}$. As we already mentioned, the vertices of $K_v$ correspond to the directions perpendicular to $v_i$. Since $v_i$ are equally spaced on the upper half-circle, we get that $K_v$ is a regular $2n$-gon. Clearly $|v_1|= \ldots = |v_n|$, $\beta_1 = \ldots = \beta_{2n}$,  $I_1= \ldots = I_{2n}$ and $A_1 = \ldots = A_{2n}$. Thus, one has equalities in all the inequalities in the above proof, so $|K_v| = n^2\sin^3\left(\frac{\pi}{2n}  \right)/\cos\left( \frac{\pi}{2n} \right)$. Conversely, it is easy to see that the only possibility of having equalities in all the estimates of the proof is to have the set $\{v_1, -v_1, \ldots, v_n, -v_n\}$ equally spaced on the circle. Thus, in the extremal case the only freedom of choosing $v_i$ is to apply rotations to all the vectors $v_i$ (which does not change the section $B_1^n \cap T(\R^2)$, as it corresponds to replacing $T$ with $T\circ U$ for some orthogonal transformation $U$ of $\R^2$), permuting some of the vectors (which corresponds to applying permutations of coordinates  in $\R^n$, under which $H$ changes), and reflecting some of the vectors $v_i$ (which corresponds to applying coordinate reflections in $\R^n$ which again changes $H$). Thus, up to coordinate reflections and permutations, there is only one minimal two-dimensional section of $B_1^n$.  The fact that the section of minimal volume is isometric to a regular $2n$-gon in $\R^2$ follows from Remark \ref{rem:isometry}.    
\end{proof} 
 

\section{ Negative moments approach}
\label{sec:form}

\subsection{Formulae for sections via negative moments} 
The goal of this section is to connect extremal-volume sections of convex bodies to sharp Khinchin-type inequalities for negative moments.

\begin{lemma}\label{lm:sec-f0}
Let $X$ be random vector with density $g$ in $\R^n$. Let $H$ be a codimension $k$ subspace of $\R^n$ and let $U$ be a   $k \times n$ matrix whose rows $u_1,\ldots,u_k$ form an orthonormal basis of $H^\perp$, the orthogonal complement of $H$. Then 
$
	f(x) = \int_{H + U^\top x} g
$
is the density of the random vector $UX$ in $\R^k$. 
\end{lemma}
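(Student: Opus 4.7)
The plan is to reduce this to the standard marginalization of a density, via an orthogonal change of variables that makes $UX$ the projection onto the first $k$ coordinates.

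First I would complete the $k\times n$ matrix $U$ to an orthogonal $n\times n$ matrix. Pick an orthonormal basis $v_1,\dots,v_{n-k}$ of $H$ and form the $(n-k)\times n$ matrix $V$ with these as rows. Since $u_1,\dots,u_k$ is an orthonormal basis of $H^\perp$, the concatenated matrix
\[
\tilde U = \begin{pmatrix} U \\ V \end{pmatrix}
\]
has orthonormal rows and is therefore an orthogonal $n\times n$ matrix, with $\tilde U^\top = (U^\top \ V^\top)$ and $\det \tilde U = \pm 1$. Consequently the random vector $\tilde U X \in \R^n$ has density $y \mapsto g(\tilde U^\top y)$.

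Next I would split $\tilde U X = (UX, VX)$ and recover the density of $UX$ as the $\R^k$-marginal. By Fubini, for $y \in \R^k$ the density of $UX$ at $y$ equals
\[
f(y) = \int_{\R^{n-k}} g\!\left(\tilde U^\top \begin{pmatrix} y \\ z \end{pmatrix}\right) \dd z = \int_{\R^{n-k}} g(U^\top y + V^\top z)\dd z.
\]
The final step is to reinterpret this integral as an integral over the affine subspace $H + U^\top y$ with respect to its $(n-k)$-dimensional Lebesgue measure. Since the rows of $V$ are an orthonormal basis of $H$, the linear map $V^\top : \R^{n-k} \to H$ is an isometric bijection, so by the area/change-of-variables formula (or Lemma~\ref{lem:1} applied with $K = \R^n$ and $T=V^\top$), the pushforward of Lebesgue measure on $\R^{n-k}$ under $z\mapsto V^\top z$ is exactly $\vol_H$. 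Translation by $U^\top y$ is a rigid motion on the affine subspace $U^\top y + H = H + U^\top y$, so
\[
\int_{\R^{n-k}} g(U^\top y + V^\top z)\dd z = \int_{H + U^\top y} g,
\]
which is the claimed identity.

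The only mildly non-routine point is verifying that the parameterisation $z \mapsto U^\top y + V^\top z$ of $H + U^\top y$ has unit Jacobian, i.e.\ that the $\R^{n-k}$-Lebesgue measure it carries matches the intrinsic volume $\vol_{H+U^\top y}$; this is immediate from $VV^\top = I_{(n-k)\times(n-k)}$, which follows from orthonormality of the $v_i$. Everything else is a routine rewriting of the marginal density of an orthogonally transformed random vector.
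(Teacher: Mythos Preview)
Your proof is correct. Both your argument and the paper's rest on Fubini's theorem, but the mechanics differ. The paper works directly with preimages: it observes that $H = \ker U$ and $UU^\top = I_{k\times k}$, deduces $U^{-1}(B) = H + U^\top B$ for any Borel $B \subseteq \R^k$, and then applies Fubini to $\p{UX \in B} = \int_{H + U^\top B} g$ to read off the density $f$. You instead extend $U$ to a full orthogonal matrix $\tilde U = \binom{U}{V}$, note that $\tilde U X$ has density $g(\tilde U^\top \cdot)$, and marginalize out the $V$-coordinates; your final reinterpretation of $\int_{\R^{n-k}} g(U^\top y + V^\top z)\,\dd z$ as $\int_{H+U^\top y} g$ via the isometry $V^\top: \R^{n-k} \to H$ is precisely the step the paper encodes in its preimage identity. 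Your route is slightly more concrete in that it names an explicit parametrisation of each affine fibre, while the paper's is marginally more economical since it never needs to extend $U$. Both are standard and essentially equivalent.
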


\begin{proof}
For $x=(x_1,\ldots, x_k)$ we have $U^\top x = \sum_{i=1}^k u_i x_i$. Since $u_i$ span $H^\perp$, we get that $y \in H^\perp$ iff $y=U^\top x$ for some $x \in \R^k$. Moreover, since $u_i$ are orthonormal, we get that $x \mapsto U^\top x$ is an isometric embedding of $\R^k$ into $\R^n$, whose image is $H^\perp$. By Fubini's theorem $f$ is measurable on $\R^k$. 

Let us now take a measurable set $B \subseteq \R^k$. Note that $H=\{x \in \R^n: \scal{x}{u_i}=0, 1 \leq i \leq k \}$ and thus $H= \ker U$. Every point $y \in U^{-1}(B)$ can be written as $y=y_1+y_2$, where $y_1 \in H$ and $y_2 \in H^\perp \cap U^{-1}(B)$. Since every point in $H^\perp$ is of the form $y_2=U^\top z$ for $z \in \R^k$ and $U^\top z \in U^{-1}(B)$ iff $U U^\top z \in B$, which is just $z \in B$ as $UU^\top = I_{k\times k}$, we get that $U^{-1}(B)=H+U^\top B$. Thus, by Fubini's theorem we get
\[
\p{UX \in B} =\p{X \in U^{-1}(B)} = \p{X \in H+U^\top B}  = \int_{B} \left(\int_{H + U^\top x} g \right) \dd x  = \int_B f(x) \dd x.
\]
\end{proof}

\begin{corollary}\label{cor:sec-dens}
Let $A$ be a measurable set in $\R^n$ of volume $1$ and let $X$ be a uniform random vector on $A$. Let $H$ be a codimension $k$ subspace of $\R^n$ and let $U$ be a   $k \times n$ matrix whose rows form an orthonormal basis of $H^\perp$, the orthogonal complement of $H$. Then 
\[
f(x) = \vol_{n-k}(A \cap (H + U^\top x  ))
\]
is the density of the random vector $UX$ in $\R^k$. Moreover, if $A$ is a convex body, then on its support the above function is the unique continuous version of the density of $UX$.  This continuous version satisfies 
\[
f(0) = \vol_{n-k}(A \cap H)
\] 
if $0 \in \inter \supp(f)$.  
\end{corollary}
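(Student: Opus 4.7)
The plan is to read the corollary off Lemma \ref{lm:sec-f0}, with the convexity of $A$ invoked only to upgrade the a.e.-defined density to a continuous representative on the interior of its support.

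First, I would apply Lemma \ref{lm:sec-f0} to $g = \mathbf{1}_A$, which is a probability density on $\R^n$ by the hypothesis $\vol_n(A) = 1$. Since the $(n-k)$-dimensional Lebesgue integral of $\mathbf{1}_A$ along the affine subspace $H + U^\top x$ is exactly $\vol_{n-k}(A \cap (H + U^\top x))$, the lemma immediately yields the claimed formula for a density of $UX$.

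Next, under the convexity hypothesis on $A$, I would invoke Brunn's concavity principle to obtain that $y \mapsto \vol_{n-k}(A \cap (H + y))^{1/(n-k)}$ is concave in $y \in H^\perp$ on the convex set where it is positive (which is the orthogonal projection of $A$ onto $H^\perp$). Composing with the isometry $\R^k \ni x \mapsto U^\top x \in H^\perp$, this translates to concavity of $f^{1/(n-k)}$ on the convex set $\supp f \subset \R^k$, from which continuity of $f$ on $\inter \supp f$ is immediate. Uniqueness of the continuous representative is automatic: two continuous functions on an open set that agree almost everywhere must coincide pointwise there. The evaluation $f(0) = \vol_{n-k}(A \cap H)$ is then just the formula with $x=0$, interpreted as a pointwise statement using the continuous version whenever $0 \in \inter \supp f$.

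I do not foresee any genuine obstacle: the argument is essentially bookkeeping on top of Lemma \ref{lm:sec-f0}, with Brunn's principle as the one external input needed for continuity. The only small point requiring attention is identifying $\inter \supp f$ with (the preimage under $U^\top$ of) the relative interior of the projection of $A$ onto $H^\perp$, but this follows immediately from the convexity of $A$.
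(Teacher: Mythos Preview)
Your proposal is correct and follows essentially the same approach as the paper: apply Lemma \ref{lm:sec-f0} with $g=\mathbf{1}_A$, then invoke the Brunn--Minkowski (Brunn's concavity) principle to conclude that $f^{1/(n-k)}$ is concave on the interior of its support, hence continuous there. The paper's own proof is just a two-sentence version of exactly this.
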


\begin{proof}
This is a special case of Lemma \ref{lm:sec-f0}. If $A$ is a convex body, then  by Brunn-Minkowski inequality $f^{\frac{1}{n-k}}$ is concave on the interior of its support and therefore continuous.
\end{proof}

\begin{lemma}\label{lm:f0-neg-mom}
Let $X$ be a  random vector in $\R^k$ with density $f$ such that $\|f\|_\infty = f(0)$ and $f$ is lower semi-continuous at $0$. Let $\|\cdot\|$ be a norm on $\R^k$ with closed unit ball $K$. We have,
\[
f(0) = \lim_{q \to k-} \frac{k-q}{k \cdot\vol_k(K)}\E\|X\|^{-q}.
\]
\end{lemma}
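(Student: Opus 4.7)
The plan is to reduce the expectation to a one-dimensional integral via polar decomposition with respect to the norm $\|\cdot\|$, then isolate the contribution near the origin through a natural change of variables. Concretely, let $\sigma$ denote the cone measure on $\partial K$, normalised so that $\sigma(\partial K) = k\vol_k(K)$, which gives the polar formula $\int_{\R^k} g\,\dd x = \int_0^\infty r^{k-1}\! \int_{\partial K} g(r\theta)\,\dd\sigma(\theta)\,\dd r$. Applying this to $g(x) = \|x\|^{-q} f(x)$ produces
\[
\E\|X\|^{-q} = \int_0^\infty r^{k-1-q} F(r)\,\dd r, \qquad F(r) := \int_{\partial K} f(r\theta)\,\dd\sigma(\theta),
\]
where $F$ is defined for a.e.\ $r > 0$ and $\int_0^\infty r^{k-1} F(r)\,\dd r = \int_{\R^k} f = 1$. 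Since $(k-q) r^{k-1-q}\,\dd r = \dd(r^{k-q})$, the substitution $u = r^{k-q}$ rewrites the target quantity as
\[
(k-q)\E\|X\|^{-q} = \int_0^\infty F\bigl(u^{1/(k-q)}\bigr)\,\dd u.
\]

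I would then split this last integral at $u=1$. For the tail $u > 1$, reverting the substitution gives
\[
\int_1^\infty F\bigl(u^{1/(k-q)}\bigr)\,\dd u = (k-q)\int_1^\infty F(r) r^{k-1-q}\,\dd r \leq (k-q)\int_1^\infty F(r) r^{k-1}\,\dd r \leq k-q,
\]
which vanishes as $q \to k^-$. For $u \in (0,1)$ the exponent $1/(k-q)$ tends to $+\infty$, so $u^{1/(k-q)} \to 0$, and the goal is to show this piece tends to $M := k\vol_k(K) f(0)$. The crux is controlling $F(r)$ as $r \to 0^+$: the hypothesis $\|f\|_\infty = f(0)$ gives $f \leq f(0)$ a.e., hence $F(r) \leq M$ for a.e.\ $r$; meanwhile, lower semi-continuity of $f$ at $0$ together with the compactness of $\partial K$ (recall that $K$ is the unit ball of a norm on $\R^k$, so bounded) implies that for every $\e > 0$ there exists $\delta>0$ with $f(r\theta) \geq f(0) - \e$ uniformly in $\theta\in\partial K$ for all sufficiently small $r>0$, yielding $F(r) \geq M - \e\cdot k\vol_k(K)$ near $0$. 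Combining, $F(r) \to M$ in the essentially-everywhere sense as $r\to 0^+$. Since $(1-\eta)^{1/(k-q)} \to 0$ as $q\to k^-$, the sandwich $F(u^{1/(k-q)}) \in [M - \e\cdot k\vol_k(K),\, M]$ holds a.e.\ on $[0, 1-\eta]$ for $q$ close enough to $k$, while on $[1-\eta,1]$ I use the crude bound $F \leq M$. Letting $\eta, \e \to 0$ gives $\int_0^1 F(u^{1/(k-q)})\,\dd u \to M$, and dividing by $k\vol_k(K)$ closes the argument.

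The main technical subtlety lies in handling the regularity of $F$ near the origin: $f$ is only assumed lower semi-continuous at a single point, so $F$ need not be continuous anywhere, and one must carefully combine this one-sided pointwise information with the global a.e.\ bound $f \leq f(0)$ to pin down the behaviour of $F(r)$ as $r \to 0^+$. Once this is in place, the polar representation and the substitution $u = r^{k-q}$ cleanly concentrate the analysis near zero, and the tail estimate is a one-line consequence of $f$ being a probability density.
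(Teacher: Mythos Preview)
Your proof is correct. It differs from the paper's in packaging rather than in substance: both arguments localise near the origin using $f \leq f(0)$ a.e.\ for the upper bound and lower semi-continuity at $0$ for the lower bound, and control the tail via the integrability of $f$. The paper does this directly, first establishing the closed form $\int_{tK}\|x\|^{-q}\,\dd x = \tfrac{k}{k-q}t^{k-q}\vol_k(K)$ and then splitting $\E\|X\|^{-q}$ over a large ball $MK$ and its complement (for the upper bound) and restricting to a small ball $\delta K$ (for the lower bound). You instead pass to polar coordinates, introduce the spherical average $F(r)$, and use the substitution $u = r^{k-q}$ to recast the problem as $\int_0^\infty F(u^{1/(k-q)})\,\dd u \to k\vol_k(K)f(0)$, an approximation-to-the-identity manoeuvre. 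Your route makes the concentration at the origin transparent through the substitution, at the cost of introducing the cone measure and the auxiliary function $F$; the paper's route avoids these and is a touch more self-contained, but the two are interchangeable.
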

\begin{proof}
We first claim that
\begin{equation}\label{eq:vol-via-norm}
	\int_{tK}\|x\|^{-q} \dd x = \frac{k}{k-q}t^{k-q} \vol_k(K), \qquad \textrm{for} \ \ t > 0, \ 0 < q < k.
\end{equation}
Indeed, thanks to the homogeneity of volume, we have  
\begin{align*}
\int_{tK}\|x\|^{-q} \dd x & = \int_{tK} \int_{\|x\|}^\infty qs^{-(q+1)} \dd s  \dd x = \int_{tK} \left(\int_{0}^\infty qs^{-(q+1)} \1_{\|x\| \leq s} \dd s \right)  \dd x  
\\
&  = \int_{0}^\infty qs^{-(q+1)}  \left(\int_{tK} \1_{\|x\| \leq s}   \dd x \right) \dd s =  \int_{0}^\infty qs^{-(q+1)}  \left(\int_{\R^k} \1_{\|x\| \leq \min(s,t)}   \dd x \right) \dd s \\
& =   \vol_k(K) \int_{0}^\infty qs^{-(q+1)}  \min(s,t)^k \dd s = \frac{k}{k-q}t^{k-q} \vol_k(K).
\end{align*}
Take $M>0$. Using \eqref{eq:vol-via-norm} with $t=M$, we get
\begin{align*}
\frac{k-q}{k\cdot\vol_k(K)}\E\|X\|^{-q} & = \frac{k-q}{k\cdot\vol_k(K)} \int_{MK} \|x\|^{-q} f(x) \dd x + \frac{k-q}{k\cdot\vol_k(K)} \int_{(MK)^c} \|x\|^{-q} f(x) \dd x 
\\ & \leq  \frac{k-q}{k\cdot\vol_k(K)}\|f\|_\infty \int_{MK}\|x\|^{-q} \dd x + \frac{k-q}{k\cdot\vol_k(K)} M^{-q} \\
&  = \|f\|_\infty M^{k-q} + \frac{k-q}{k\cdot\vol_k(K)} M^{-q}.
\end{align*}
Fix $\e > 0$. Since $\|f\|_\infty = f(0)$ and $f$ is lower semi-continuous at $0$, the set $\{x \in \R^k, f(x) > \|f\|_\infty - \e\}$ contains a neighbourhood of $0$, say $\delta K$ for some $\delta > 0$. Then,
\begin{align*}
\frac{k-q}{k\cdot\vol_k(K)}\E\|X\|^{-q} &\geq \frac{k-q}{k\cdot\vol_k(K)} \int_{\delta K} \|x\|^{-q} f(x) \dd x \\
&\geq  \frac{k-q}{k\cdot\vol_k(K)}(\|f\|_\infty-\e) \int_{\delta K}\|x\|^{-q} \dd x \\
&= (\|f\|_\infty-\e) \delta^{k-q}.
\end{align*}
These two bounds show that as $q \to k-$, the $\liminf$ and $\limsup$ of $\frac{k-q}{k\cdot\vol_k(K)}\E\|X\|^{-q}$ are within $\e$ of $\|f\|_\infty$.
\end{proof}

Combining Corollary  \ref{cor:sec-dens} and Lemma \ref{lm:f0-neg-mom} yields a probabilistic formula for sections in terms of negative moments.

\begin{corollary}\label{cor:sec-via-moments}
Let $A$ be a symmetric convex body in $\R^n$ of volume $1$ and let $X$ be uniform on $A$.  Let $\|\cdot\|$ be a norm  in $\R^k$ with closed unit ball $K$. Let $H$ be a codimension $k$ subspace of $\R^n$ and let $U$ be a   $k \times n$ matrix whose rows form an orthonormal basis of $H^\perp$. Then
\[
	\vol_{n-k}(A \cap H) = \lim_{q \to  k-} \frac{k-q}{k \cdot\vol_k(K)}\E\|U X\|^{-q}.
\]  
\end{corollary}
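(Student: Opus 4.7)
The plan is to combine Corollary \ref{cor:sec-dens}, which identifies the density of the marginal $UX$ and in particular its value at the origin, with Lemma \ref{lm:f0-neg-mom}, which rewrites $f(0)$ as a limit of negative moments. From Corollary \ref{cor:sec-dens} we obtain that the continuous version of the density of $UX$ on the interior of its support is
\[
f(x) = \vol_{n-k}\bigl(A \cap (H + U^\top x)\bigr),
\]
and $f(0) = \vol_{n-k}(A \cap H)$ provided $0 \in \inter \supp f$.

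The substance of the argument is verifying the two hypotheses needed to apply Lemma \ref{lm:f0-neg-mom} to $f$ on $\R^k$, namely $\|f\|_\infty = f(0)$ and lower semi-continuity of $f$ at $0$. I would argue as follows. Since $A$ is origin-symmetric, $X$ and $-X$ have the same law, hence so do $UX$ and $-UX$, and $f$ is even. By the Brunn–Minkowski inequality applied to the marginal of the uniform measure on a convex body, $f^{1/(n-k)}$ is concave on the convex set $\supp f = U(A)$. An even concave function on a symmetric convex set attains its maximum at the origin, which yields $f(0) = \|f\|_\infty$. Continuity of $f$ on $\inter \supp f$ (again from concavity of $f^{1/(n-k)}$) gives, in particular, lower semi-continuity at $0$. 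Finally, because $A$ is a symmetric convex body we have $0 \in \inter A$, and the linear surjection $U \colon \R^n \to \R^k$ sends interior points of $A$ to interior points of $U(A) = \supp f$, so $0 \in \inter \supp f$ as required.

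With these conditions in hand, Lemma \ref{lm:f0-neg-mom} applied to the density $f$ with the norm $\|\cdot\|$ (whose closed unit ball is $K$) yields
\[
\vol_{n-k}(A \cap H) \;=\; f(0) \;=\; \lim_{q \to k-} \frac{k-q}{k \cdot \vol_k(K)}\,\E\|UX\|^{-q},
\]
which is the claimed identity. I do not anticipate a genuine obstacle here; the only point requiring a moment's thought is the symmetry-plus-Brunn–Minkowski step that guarantees the density is maximized at $0$, together with the routine verification that $0$ lies in the interior of $\supp f$. Everything else is a direct invocation of the two preceding results.
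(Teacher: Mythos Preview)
Your proof is correct and follows essentially the same approach as the paper: combine Corollary~\ref{cor:sec-dens} with Lemma~\ref{lm:f0-neg-mom}, using symmetry of $UX$ together with Brunn--Minkowski (equivalently, log-concavity of the marginal) to secure $\|f\|_\infty=f(0)$. Your write-up is in fact more careful than the paper's one-line justification, spelling out the lower semi-continuity and the verification that $0\in\inter\supp f$.
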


\begin{proof}
Since $UX$ is log-concave and symmetric on $\R^k$, one gets $\|f\|_\infty=f(0)$.
\end{proof}

\subsection{Sections of the cube} 
As a first application, we sketch how to obtain a convenient probabilistic formula for central section of the cube in terms of negative moments. It was derived first perhaps in \cite{KoKol} and later appeared in \cite{Brz} as well as \cite{KR20}. Our argument is different, more direct, bypassing the Fourier-analytic identities involving Bessel functions. It was recently presented in full detail in \cite{CKT}. It is more convenient to treat the cube of unit volume, so we set
\[
Q_n = \frac{1}{2}B_\infty^n = \left[-\frac12,\frac12 \right]^n.
\]

\begin{lemma}[K\"onig-Koldobsky, \cite{KoKol}]\label{lm:sec-cube-codim1}
For a unit vector $a = (a_1,\ldots,a_n)$ in $\R^n$, we have
\[
\vol_{n-1}\left(Q_n \cap a^\perp \right) =  \E\left|\sum_{k=1}^n a_k\xi_k\right|^{-1},
\]
where the $\xi_k$ are uniform on $S^2$ in $\R^3$.
\end{lemma}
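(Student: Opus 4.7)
The plan is to deduce this lemma directly from Corollary~\ref{cor:sec-via-moments}. Apply the corollary with $A = Q_n$ (which has unit volume), codimension $k=1$, norm $\|\cdot\|$ the absolute value on $\R$ (so $K = [-1,1]$ and $\vol_1(K) = 2$), and $U = a^\top$ the $1\times n$ matrix with row $a$. Writing $S = \scal{a}{X} = \sum_{k=1}^n a_k X_k$ where $X$ is uniform on $Q_n$ (so that $X_1,\dots,X_n$ are i.i.d.\ uniform on $[-\tfrac12,\tfrac12]$), the corollary yields
\[
\vol_{n-1}(Q_n \cap a^\perp) \;=\; \lim_{q\to 1^-}\frac{1-q}{2}\,\E |S|^{-q}.
\]

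The remaining work is to rewrite this limit in terms of the $\xi_k$. I would introduce the auxiliary $\R^3$-valued random vector $Z = \sum_{k=1}^n a_k \xi_k$. Since each $\xi_k$ is rotationally invariant on $\R^3$, so is $Z$, and hence we can decompose $Z = |Z|\,\Theta$ with $\Theta$ uniform on $S^2$ and independent of $|Z|$. Projecting onto the first coordinate in two different ways is the key step: on the one hand, $\xi_k^{(1)}$ is uniform on $[-1,1]$ (the standard fact that the pushforward of the uniform measure on $S^2 \subset \R^3$ onto a line is uniform), so $Z^{(1)} \stackrel{d}{=} 2S$; on the other hand, $Z^{(1)} = |Z|\,\Theta^{(1)}$ with $\Theta^{(1)}$ uniform on $[-1,1]$ and independent of $|Z|$.

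Taking $q$-th negative moments for $q \in (0,1)$ in both representations gives
\[
2^{-q}\,\E |S|^{-q} \;=\; \E |Z^{(1)}|^{-q} \;=\; \E |Z|^{-q}\cdot \E |\Theta^{(1)}|^{-q} \;=\; \frac{\E |Z|^{-q}}{1-q},
\]
using $\int_0^1 u^{-q}\dd u = \frac{1}{1-q}$. Rearranging and combining with the formula above,
\[
\vol_{n-1}(Q_n \cap a^\perp) \;=\; \lim_{q \to 1^-}\frac{1-q}{2}\E |S|^{-q} \;=\; \lim_{q\to 1^-} 2^{q-1}\,\E |Z|^{-q}.
\]
Passing the limit inside the expectation then delivers the desired equality.

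The main obstacle is justifying $\lim_{q\to 1^-}\E |Z|^{-q} = \E |Z|^{-1}<\infty$. In the generic case that at least two of the $a_k$ are non-zero, $Z$ has a bounded density in a neighbourhood of the origin in $\R^3$ (it is a convolution of spherically supported measures, two of which already produce a bounded $\R^3$-density near $0$), and combined with the fact that $|z|^{-1}$ is integrable on any bounded neighbourhood of $0\in\R^3$ (spherical coordinates yield $\int_0^R r^{-1}\cdot r^2\,\dd r<\infty$), this gives finiteness. The interchange of limit and expectation then follows by splitting $\E|Z|^{-q}$ into the parts where $|Z|\le 1$ (monotone convergence as $q\uparrow 1$) and $|Z|>1$ (dominated convergence with dominating function $1$). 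The degenerate case where only one coordinate of $a$ is non-zero (forcing $|a_k|=1$) can be checked by hand: both sides equal $1$.
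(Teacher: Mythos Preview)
Your approach is essentially the same as the paper's: both start from Corollary~\ref{cor:sec-via-moments} and then establish the identity $\E|Z|^{-q}=(1-q)\E|\sum a_kU_k|^{-q}$ via the rotational-symmetry decomposition $Z=|Z|\Theta$ (this is precisely ``Lata\l a's argument'' referenced in the paper). You spell out both the identity and the limit interchange in more detail than the paper does.

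One small inaccuracy: your claim that $Z$ has a \emph{bounded} density near the origin is false in the borderline case $a=(\tfrac{1}{\sqrt 2},\tfrac{1}{\sqrt 2},0,\dots,0)$. Indeed, $|\xi_1+\xi_2|^2=2+2\scal{\xi_1}{\xi_2}$ is uniform on $[0,4]$, so the $\R^3$-density of $\xi_1+\xi_2$ equals $\tfrac{1}{8\pi|x|}$ on $\{|x|\le 2\}$, which blows up at $0$. This does not harm your conclusion: either observe that $|x|^{-1}\cdot|x|^{-1}$ is still locally integrable in $\R^3$, or, more cleanly, note that you do not need any density argument at all. Your identity already gives $\E|Z|^{-q}=2^{q-1}\cdot\tfrac{1-q}{2}\E|S|^{-q}$, and the right side converges (to the section volume) by Corollary~\ref{cor:sec-via-moments}; your monotone/dominated split on the left then forces $\E|Z|^{-1}$ to equal this finite limit.
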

\begin{proof}
Let $U_1,\ldots, U_n$ be i.i.d. uniform on $[-1,1]$. From Corollary \ref{cor:sec-via-moments} applied with $k=1$ one gets
\[
\vol_{n-1}\left(Q_n \cap a^\perp \right) = \lim_{q \to 1-} (1-q)\E\left|\sum_{k=1}^n a_kU_k\right|^{-q}. 
\]
It is therefore enough to show that for $q<1$ one has
\[
\E\left|\sum_{k=1}^n a_k\xi_k\right|^{-q} = (1-q)\E\left|\sum_{k=1}^n a_kU_k\right|^{-q}.
\]
This can be shown by repeating Lata\l a's argument leveraging rotational symmetry from Proposition 4 in \cite{KK01}. It has also been written in full detail in Lemma 3 in \cite{CKT}.
\end{proof}

\begin{remark}\label{rem:Qn-sec}
The following alternative Fourier-analytic formula for the volume of central codimension $1$ sections perhaps goes back to P\'olya and is well known (see, e.g. \cite{B86})
\[
	\vol_{n-1}(Q_n \cap a^\perp)= \frac{2}{\pi}\int_0^\infty \prod_{j=1}^n \frac{\sin(a_jt)}{a_jt} \dd t.
\] 
\end{remark}

\subsection{Sections of \texorpdfstring{$B_p^n$}{Bpn} via negative moments}

Let $p > 0$. Throughout the paper, we let 
\[
Y_1^{(p)}, Y_2^{(p)}, \dots \ \ \text{be i.i.d. random variables with density $e^{-\beta_{p}^p |x|^p}$},
\] 
where 
\[
\beta_p = 2\Gamma(1+1/p)
\]
is chosen such that $\int_{\R} e^{-\beta_p^p|x|^p} \dd x = 1$. We shall derive the following lemma.

\begin{lemma}\label{lm:Bpn-sec-negative-moments}
Let $H$ be a subspace in $\R^n$ of codimension $k$ such that the rows of a $k \times n$ matrix $U$ form an orthonormal basis of $H^\perp$. Let $v_1, \dots, v_n$ be the columns of $U$. Then
\[
	\frac{\vol_{n-k}(B_p^n \cap H)}{\vol_{n-k}(B_p^{n-k})} = \lim_{q\to k-} \frac{k-q}{k\vol_k(B_2^k)}  \mb{E}\Big|\sum_{j=1}^n Y_j^{(p)} v_j \Big|^{-q}.
\]
\end{lemma}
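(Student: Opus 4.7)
My plan is to reduce the lemma to Corollary \ref{cor:sec-via-moments} by converting an integral over $B_p^n$ to an expectation against the product density of $Y = (Y_1^{(p)}, \dots, Y_n^{(p)})$ via polar integration in the $\ell_p$ norm.

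First I would apply Corollary \ref{cor:sec-via-moments} with $K = B_2^k$ (so $\|\cdot\|$ is the Euclidean norm on $\R^k$) to the volume-normalised body $\widetilde A = \vol_n(B_p^n)^{-1/n} B_p^n$. Writing $\widetilde X$ for the uniform vector on $\widetilde A$ and using $\widetilde A \cap H = \vol_n(B_p^n)^{-1/n}(B_p^n \cap H)$ together with homogeneity of $|U(\cdot)|^{-q}$, this unfolds to
\begin{equation}\label{eq:plan-step1}
\vol_{n-k}(B_p^n \cap H) = \lim_{q\to k-}\frac{k-q}{k\,\vol_k(B_2^k)}\,\vol_n(B_p^n)^{(q-k)/n}\int_{B_p^n}|Ux|^{-q}\,\dd x,
\end{equation}
where I use that $UY = \sum_j Y_j v_j$ since the $v_j$ are the columns of $U$. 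The factor $\vol_n(B_p^n)^{(q-k)/n}$ tends to $1$ as $q\to k-$, so it will disappear in the limit.

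Next I would evaluate $\int_{B_p^n}|Ux|^{-q}\,\dd x$ by the standard $\ell_p$-polar decomposition. Writing $x = s\omega$ with $s=\|x\|_p$ and $\omega \in \partial B_p^n$, and letting $\mu_p$ be the cone measure on $\partial B_p^n$ normalised so that $\int_{\R^n}F(x)\,\dd x = \int_0^\infty\int_{\partial B_p^n}F(s\omega)s^{n-1}\,\dd\mu_p(\omega)\,\dd s$, both sides of the identity
\[
\int_{B_p^n}|Ux|^{-q}\,\dd x = \frac{p\,\beta_p^{\,n-q}}{(n-q)\,\Gamma\!\left(\tfrac{n-q}{p}\right)}\,\E|UY|^{-q}
\]
can be split into a radial part times $\int_{\partial B_p^n}|U\omega|^{-q}\,\dd\mu_p(\omega)$, using that $|U\cdot|^{-q}$ is $(-q)$-homogeneous. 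The left radial part gives $\int_0^1 s^{n-q-1}\,\dd s = (n-q)^{-1}$, while the right gives $\int_0^\infty s^{n-q-1}e^{-\beta_p^p s^p}\,\dd s = \beta_p^{-(n-q)}\Gamma((n-q)/p)/p$ after the substitution $t=\beta_p^p s^p$. Dividing yields the displayed identity, valid for $0 < q < n$.

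Finally I would substitute this back into \eqref{eq:plan-step1}. The prefactor $\frac{p\,\beta_p^{\,n-q}}{(n-q)\,\Gamma((n-q)/p)}$ is continuous at $q=k$, and the explicit formula $\vol_m(B_p^m) = p\beta_p^m/(m\,\Gamma(m/p))$ (which follows from $\vol_m(B_p^m)=\beta_p^m/\Gamma(1+m/p)$ and $\beta_p = 2\Gamma(1+1/p)$) shows that its value at $q=k$ is exactly $\vol_{n-k}(B_p^{n-k})$. Combined with $\vol_n(B_p^n)^{(q-k)/n}\to 1$, this yields the stated formula.

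The main obstacle is bookkeeping in the second step: justifying the interchange of integrals in the polar decomposition and matching all the $\beta_p$ and $\Gamma$-factors correctly, and being sure that the continuity of the prefactor at $q=k$ legitimately pushes the limit onto $\E|UY|^{-q}$ alone. Both issues are routine, since $|UY|^{-q}$ is integrable for $q$ slightly less than $k$ (this is where the codimension-$k$ condition is used, via the fact that the density of $UY$ is bounded on $\R^k$).
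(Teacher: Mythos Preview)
Your proof is correct, but it takes a different route from the paper's. The paper invokes the Barthe--Gu\'edon--Mendelson--Naor representation of the uniform measure on $B_p^n$ (writing $X = Y/(S^p+\mathcal{E})^{1/p}$) together with the Schechtman--Zinn independence of $S$ and $Y/S$ to show that $\E|\sum X_j v_j|^{-q}$ equals a constant $c(p,q,n)$ times $\E|\sum Y_j^{(p)} v_j|^{-q}$; it then identifies the constant \emph{indirectly}, by plugging in the coordinate subspace $v_j = e_j$ for $j\leq k$ and reapplying Lemma~\ref{lm:f0-neg-mom}. Your argument bypasses these probabilistic representations entirely and goes straight through $\ell_p$-polar integration (cone measure), computing the proportionality constant explicitly as $\frac{p\beta_p^{n-q}}{(n-q)\Gamma((n-q)/p)}$ and recognising its value at $q=k$ as $\vol_{n-k}(B_p^{n-k})$. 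The underlying reason both work is the same --- the uniform measure on $B_p^n$ and the product density $e^{-\beta_p^p\|x\|_p^p}$ share the same angular (cone) part --- but your version is more self-contained calculus, while the paper's leans on structural probabilistic facts it also wants to advertise. Your integrability remark at the end (boundedness of the density of $UY$ on $\R^k$) is the right justification for pushing the limit through.
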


\begin{proof}
Let $v_1, \dots, v_n$ be the columns of $U$. Note that
\[
\sum_{j=1}^n v_jv_j^\top = I_{k \times k}.
\]
We take $X=(X_1,\ldots, X_n)$ to be uniform on $B_p^n$. Then $X/\vol_n(B_p^n)^{1/n}$ is uniform on $\tilde{B}_p^n = B_p^n/\vol_n(B_p^n)^{1/n}$, which has volume $1$. Using Corollary \ref{cor:sec-via-moments} with the Euclidean norm $|\cdot|$ gives
\[
\frac{\vol_{n-k}\left(B_p^n \cap H\right)}{(\vol_n(B_p^n))^{n-k}} = \vol_{n-k}\left(\tilde{B}_p^n \cap H\right) = \lim_{q\to k-} \frac{\vol_n(B_p^n)^{\frac{q}{n}}(k-q)}{k\vol_k(B_2^k)}  \E\left|\sum_{j=1}^n X_j v_j \right|^{-q}. 
\]
We shall now use two important facts:
\vspace{0.2cm}
\begin{itemize}\itemsep=0.2cm
\item[(a)] (Barthe, Gu\'edon, Mendelson, Naor,  \cite{BGMN05}) Let $Y_1,\ldots, Y_n$ be i.i.d. random variables with densities $\beta_p^{-1}e^{-|x|^p}$ and write $Y=(Y_1,\ldots,Y_n)$. Define $S=\big(\sum_{j=1}^n |Y_j|^p\big)^{1/p}$. Let $\mathcal{E}$ be an exponential random variable with density  $e^{-t}{\bf 1}_{\{t>0\}}$, independent of the $Y_j$. Then the random vector 
$\frac{Y}{(S^p+\mathcal{E})^{1/p}}$ 
is uniformly distributed on $B_p^n$.
\item[(b)] (Schechtman, Zinn, see \cite{SZ90} and Rachev, R\"uschendorf, \cite{RR91}) With the above notation  $S$ and $Y/S$ are independent.
\end{itemize}  
\vspace{0.2cm}
In \cite{BGMN05}  Barthe, Gu\'edon, Mendelson and Naor observed that using (a) and (b) one gets
\begin{equation*}
\mb{E} \Big| \sum_{j=1}^n X_j v_j \Big|^{-q} = \mb{E} \Big| \frac{1}{(S^p+\mathcal{E})^{1/p}} \sum_{j=1}^n Y_j v_j \Big|^{-q} = \mb{E} \Big| \frac{S}{(S^p+\mathcal{E})^{1/p}}\Big|^{-q} \mb{E} \Big|\sum_{j=1}^n  \frac{Y_j}{S} v_j\Big|^{-q}.
\end{equation*}
It follows that $\mb{E} \Big| \frac{S}{(S^p+\mathcal{E})^{1/p}}\Big|^{-q}$ is finite. Thus
\[
	e^{-1} \E|S|^{-q} = \E|S|^{-q} \1_{\mc{E}>1} \leq  \mb{E} \Big| \frac{S}{(S^p+\mathcal{E})^{1/p}}\Big|^{-q} <\infty. 
\]
Then, again by independence of $S$ and $Y/S$, we have 
\[
\mb{E}\big| \sum_{j=1}^n  \frac{Y_j}{S} v_j\big|^{-q} \mb{E}|S|^{-q} = \mb{E} \big| \sum_{j=1}^n Y_j v_j\big|^{-q}
\]
and therefore
\begin{align*}
\mb{E}\Big| \sum_{j=1}^n  X_j v_j \Big|^{-q} & = \frac{1}{\mb{E}|S|^{-q}} \mb{E}\Big| \frac{S}{(S^p+\mathcal{E})^{1/p}}\Big|^{-q} \mb{E} \Big| \sum_{j=1}^n Y_j v_j \Big|^{-q}  \\ & = c_1(p,q,n) \mb{E}\Big|\sum_{j=1}^n Y_j v_j \Big|^{-q} = c_2(p,q,n) \mb{E}\Big|\sum_{j=1}^n Y_j^{(p)} v_j \Big|^{-q},
\end{align*}
where $c_i(p,q,n)>0$ is independent of $v_1,\ldots,v_n$. As a result one gets
\[
	\vol_{n-k}(B_p^n \cap H) = c_3(k,p,n) \lim_{q\to k-} \frac{k-q}{k\vol_k(B_2^k)}  \mb{E}\Big|\sum_{j=1}^n Y_j^{(p)} v_j \Big|^{-q}.
\]
Taking $v_j= e_j$ for $1 \leq i \leq k$ and $v_j=0$ for $k+1 \leq j \leq n$ and using Lemma \ref{lm:f0-neg-mom} we obtain
\[
	\vol_{n-k}(B_p^{n-k}) = c_3(k,p,n) \lim_{q\to k-} \frac{k-q}{k\vol_k(B_2^k)}  \mb{E}\Big|(Y_1^{(p)}, \ldots, Y_k^{(p)}) \Big|^{-q} = c_3(k,p,n).
\]
\end{proof}

\begin{corollary}\label{cor:secBpn}
Let $p > 0$. For a unit vector $a \in \R^n$, we have
\[
\frac{\vol_{n-1}(B_p^n \cap a^\perp)}{\vol_{n-1}(B_p^{n-1})} = f_{a}(0),
\]
where $f_a$ is the density of $\sum_{j=1}^n a_jY_j^{(p)}$. 
\end{corollary}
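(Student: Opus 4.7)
The plan is to directly specialise Lemma \ref{lm:Bpn-sec-negative-moments} to the codimension one case $k=1$ and then identify the resulting limit as the value of the density at the origin by invoking Lemma \ref{lm:f0-neg-mom}.

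First, I would take $H = a^\perp$, so that $H^\perp = \mathrm{span}(a)$ is one-dimensional, and let $U = (a_1, \ldots, a_n)$ be the $1 \times n$ matrix whose single row is the unit vector $a$ (automatically an orthonormal basis of $H^\perp$). The columns of $U$ are then the scalars $v_j = a_j$. Since $\vol_1(B_2^1) = 2$, Lemma \ref{lm:Bpn-sec-negative-moments} yields
\[
\frac{\vol_{n-1}(B_p^n \cap a^\perp)}{\vol_{n-1}(B_p^{n-1})} = \lim_{q \to 1-} \frac{1-q}{2}\, \mb{E}\Big|\sum_{j=1}^n a_j Y_j^{(p)}\Big|^{-q}.
\]

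Next, I would apply Lemma \ref{lm:f0-neg-mom} in dimension $k=1$ with the Euclidean norm $\|\cdot\| = |\cdot|$ (so $K = [-1,1]$ and $\vol_1(K) = 2$) to the random variable $X = \sum_{j=1}^n a_j Y_j^{(p)}$. Provided the hypotheses $\|f_a\|_\infty = f_a(0)$ and lower semi-continuity of $f_a$ at $0$ hold, the lemma gives
\[
f_a(0) = \lim_{q \to 1-} \frac{1-q}{2}\, \mb{E}|X|^{-q},
\]
which matches the right-hand side above and concludes the proof.

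The only non-routine point is verifying the two hypotheses of Lemma \ref{lm:f0-neg-mom} for $f_a$. By symmetry of the common density $e^{-\beta_p^p|x|^p}$ of the $Y_j^{(p)}$, the variable $X$ is symmetric, so $f_a$ is even. When $p \geq 1$, the density $e^{-\beta_p^p|x|^p}$ is log-concave; by Pr\'ekopa's theorem, $f_a$ is log-concave and hence continuous and maximised at its centre of symmetry $0$. When $0 < p < 1$, log-concavity fails, but the density is still symmetric and unimodal with mode at $0$, and Wintner's theorem (convolutions of symmetric unimodal densities are symmetric unimodal) again places the maximum at $0$; continuity of $f_a$ at $0$ follows from standard smoothing since the density of each $a_j Y_j^{(p)}$ with $a_j \neq 0$ is bounded and continuous (and at least one such $j$ exists, since $a$ is a unit vector; if only one does, then $a^\perp = e_j^\perp$ and the identity is trivial). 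This verification is the main, though mild, obstacle.
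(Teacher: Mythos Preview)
Your proposal is correct and follows essentially the same route as the paper, which simply says the formula comes from combining Lemma~\ref{lm:Bpn-sec-negative-moments} with Lemma~\ref{lm:f0-neg-mom} and that the normalisation can be checked at $a=e_1$. You go further than the paper in explicitly verifying the hypotheses $\|f_a\|_\infty=f_a(0)$ and continuity at $0$, splitting into the log-concave case $p\ge 1$ and the unimodal case $0<p<1$ via Wintner's theorem; the paper leaves this implicit.
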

\begin{proof}
This formula follows by combining Lemma \ref{lm:Bpn-sec-negative-moments} with Lemma \ref{lm:f0-neg-mom}. The correctness of the normalization constant can be checked by plugging in $a=e_1$.
\end{proof}

As an application, we show how to obtain the following theorem of Meyer and Pajor from \cite{MP88}. The main idea of exploiting Kanter's peakedness from \cite{Kant} comes from the original proof of Meyer and Pajor. In addition to illustrating our approach via negative moments, which we will build upon later, we hope this proof might be of independent interest.

\begin{theorem}[Meyer-Pajor, \cite{MP88}]\label{thm:Bpn-minimal-p>2}
Let $1 \leq k \leq n$ and let $H$ be a subspace in $\R^n$ of codimension~$k$. Then the following function
\[
	p \mapsto \vol_{n-k}(B_p^n \cap H)/\vol_{n-k}(B_{p}^{n-k})
\] 
is nondecreasing on $(0,\infty)$.
\end{theorem}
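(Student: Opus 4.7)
The plan is to combine Lemma \ref{lm:Bpn-sec-negative-moments} with Kanter's classical theorem on peakedness of convolutions \cite{Kant}. Let $v_1, \dots, v_n$ be the columns of a matrix $U$ whose rows form an orthonormal basis of $H^\perp$. By Lemma \ref{lm:Bpn-sec-negative-moments},
\[
\frac{\vol_{n-k}(B_p^n \cap H)}{\vol_{n-k}(B_p^{n-k})} = \lim_{q \to k^-} \frac{k-q}{k\,\vol_k(B_2^k)}\,\E\Big|\sum_{j=1}^n Y_j^{(p)} v_j\Big|^{-q},
\]
and since the normalising prefactor does not depend on $p$, it suffices to show that for each fixed $q \in (0, k)$ the negative moment $m(p) := \E|\sum_j Y_j^{(p)} v_j|^{-q}$ is nondecreasing in $p > 0$; passing to the limit $q \to k^-$ will then complete the proof.

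By the layer-cake identity
\[
m(p) = q \int_0^\infty t^{-q-1}\, \pp\Big(\Big|\sum_j Y_j^{(p)} v_j\Big| \leq t\Big)\, \dd t,
\]
this monotonicity reduces to showing that for $p_1 < p_2$ the random vector $\sum_j Y_j^{(p_2)} v_j$ is \emph{more peaked} than $\sum_j Y_j^{(p_1)} v_j$, at least with respect to origin-centred Euclidean balls. Kanter's theorem asserts that the class of centrally symmetric unimodal random vectors in $\R^k$ is closed under convolution, and that if $X'$ is more peaked than $X$ --- i.e.\ $\pp(X' \in C) \geq \pp(X \in C)$ for every centrally symmetric convex $C$ --- then $X' + Z$ is more peaked than $X + Z$ for any independent symmetric unimodal $Z$. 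Iterating this across the $n$ independent summands, the task reduces to the one-dimensional comparison
\[
\pp(|Y^{(p_1)}| \leq s) \leq \pp(|Y^{(p_2)}| \leq s) \qquad \text{for all } s > 0,
\]
using that each $Y_j^{(p)} v_j$, being a symmetric measure along the line $\R v_j$ with density nonincreasing on rays from the origin, is a mixture of uniforms on centred segments of $\R v_j$ and thus qualifies as symmetric unimodal in $\R^k$.

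This final one-dimensional comparison is a standard single-crossing argument applied to the densities $f_p(x) = e^{-\beta_p^p |x|^p}$. All of these share the value $f_p(0) = 1$, and for $p_1 < p_2$ elementary analysis of the exponents $\beta_p^p |x|^p$ shows that $f_{p_2} - f_{p_1}$ is positive on an interval $(0, x^*)$ and negative on $(x^*, \infty)$, where $x^* = (\beta_{p_1}^{p_1}/\beta_{p_2}^{p_2})^{1/(p_2-p_1)}$. Since $\int_\R (f_{p_2}-f_{p_1})\,\dd x = 0$, this single sign-change forces $\int_{-t}^t (f_{p_2}-f_{p_1})\,\dd x \geq 0$ for every $t > 0$, which is precisely the desired peakedness.

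The main obstacle, beyond the single-crossing computation, is to make sure that the degenerate symmetric unimodal distributions living on the lines $\R v_j \subset \R^k$ fit cleanly into Kanter's convolution lemma, and that peakedness with respect to \emph{all} centrally symmetric convex sets (which is what Kanter supplies) indeed descends to the weaker comparison against Euclidean balls needed in the layer cake. Once this bookkeeping is done, the argument is a routine combination of the layer-cake representation, the peakedness chain, and Lemma \ref{lm:Bpn-sec-negative-moments}, yielding the monotonicity of $p \mapsto \vol_{n-k}(B_p^n \cap H)/\vol_{n-k}(B_p^{n-k})$ on $(0, \infty)$.
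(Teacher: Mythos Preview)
Your proposal is correct and follows essentially the same route as the paper: both combine Lemma~\ref{lm:Bpn-sec-negative-moments} with Kanter's peakedness-under-convolution theorem to reduce to the one-dimensional comparison of the densities $e^{-\beta_p^p|x|^p}$, and then conclude monotonicity of the negative moments and hence of the volume ratio. The only difference is expository: the paper simply cites \cite{Kant} and \cite{MP88} for the peakedness step, whereas you spell out the layer-cake representation, the single-crossing argument, and the bookkeeping about the degenerate line-supported summands.
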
   
\begin{proof}
For $\beta>\alpha$ the random variable $Y_j^{(\beta)}$ is more peaked than $Y_j^{(\alpha)}$ (see \cite{Kant} and \cite{MP88}).  Thus for every vectors $v_1, \dots, v_n$ in $\R^k$, $\sum_{j=1}^n Y_j^{(\beta)}v_j$ is more peaked than $\sum_{j=1}^n Y_j^{(\alpha)} v_j$. Consequently, for a norm $\|\cdot\|$ on $\R^k$ and $0 < q < k$,
\begin{equation}\label{eq:uni-gauss}
\E\left\|\sum_{j=1}^n Y_j^{(\beta)} v_j \right\|^{-q} \geq \E\left\|\sum_{j=1}^n Y_j^{(\alpha)} v_j \right\|^{-q}.
\end{equation}
Thus, the function $\alpha \mapsto \E\left\|\sum_{j=1}^n Y_j^{(\alpha)}v_j \right\|^{-q}$ is nondecreasing on $(0,\infty)$. 
Using this 
together with Lemma \ref{lm:Bpn-sec-negative-moments},
we get that
\begin{align*}
	p \mapsto \frac{\vol_n(B_p^n \cap H)}{\vol_{n-k}(B_q^{n-k})} & =  \lim_{q\to k-} \frac{k-q}{k\vol_k(B_2^k)}  \mb{E}\Big|\sum_{j=1}^n Y_j^{(p)} v_j \Big|^{-q} 
\end{align*}
is nondecreasing. 
\end{proof}

\subsection{Sections of \texorpdfstring{$B_p^n$}{Bpn} via Gaussian mixtures} 
In the sequel we shall need one more formula in the special case of $B_p^n$ with $0<p<2$. This formula was mentioned in \cite{ENT18} (a hyperplane case) and \cite{NT} (a general case). We sketch a slightly different argument below, based again on negative moments, for simplicity for hyperplane sections.

We first need some notation. For $\alpha \in (0,1)$, let $g_\alpha$ be the density of a standard positive $\alpha$-stable random variable, that is a positive random variable $W_\alpha$ with the Laplace transform $\E e^{-uW_\alpha} = e^{-u^\alpha}$, $u > 0$. Let $V_1, \dots, V_n$ be i.i.d. positive random variables with density proportional to $t^{-3/2}g_{p/2}(t^{-1})$ and set $R_i=\sqrt{V_i/2}$. Take $G_i$ to be standard Gaussian random variables, independent of the $V_j$. According to Lemma 23(a) from \cite{ENT18}, the random variables $R_i G_i$ have densities $\beta_p^{-1}e^{-|x|^p}$. We also let
$
\bar V_j = (\E V_j^{-1/2})^2V_j
$
be normalised so that $\E \bar V_j^{-1/2} = 1$.

\begin{lemma}[Eskenazis-Nayar-Tkocz, \cite{ENT18}]\label{lm:sec-GM}
Let $0 < p < 2$. For a unit vector $a = (a_1, \dots, a_n)$ in $\R^n$, we have
\begin{equation}\label{eq:Bp-sec-p<2}
\frac{\vol_{n-1}(B_p^n \cap a^\perp)}{\vol_{n-1}(B_p^{n-1})} = \E\left(\sum_{j=1}^n a_j^2\bar V_j\right)^{-1/2}.
\end{equation}
\end{lemma}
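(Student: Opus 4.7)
The plan is to combine the negative-moments formula of Corollary \ref{cor:secBpn} with a conditional Gaussian representation of the product density $e^{-\beta_p^p|x|^p}$. By Corollary \ref{cor:secBpn}, the left-hand side equals $f_a(0)$, the value at $0$ of the density of $\sum_{j=1}^n a_jY_j^{(p)}$. Thus everything reduces to computing this density value using the Gaussian mixture structure available in the range $0<p<2$.

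First I would translate $Y_j^{(p)}$ into the form $R_jG_j$ used in \cite{ENT18}. Since $Y_j^{(p)}$ has density $e^{-\beta_p^p|x|^p}$, a scaling shows that $\beta_p Y_j^{(p)}$ has density $\beta_p^{-1}e^{-|x|^p}$, and hence $\beta_p Y_j^{(p)} \stackrel{d}{=} R_j G_j$ with $R_j=\sqrt{V_j/2}$ and $G_j$ standard Gaussian (all jointly independent). Consequently
\[
\sum_{j=1}^n a_jY_j^{(p)} \stackrel{d}{=} \beta_p^{-1}\sum_{j=1}^n a_j R_j G_j.
\]
Conditioning on $(V_1,\dots,V_n)$, the right-hand sum is centered Gaussian with variance $\tfrac{1}{2}\sum_j a_j^2 V_j$, so its density at $0$ equals $(\pi\sum_j a_j^2 V_j)^{-1/2}$. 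Using $f_{cW}(0)=|c|^{-1}f_W(0)$, I conclude
\[
f_a(0) = \frac{\beta_p}{\sqrt{\pi}}\,\E\!\left(\sum_{j=1}^n a_j^2 V_j\right)^{-1/2}.
\]

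Next I would pin down the normalising constant. By the same conditional-Gaussian calculation applied to a single term $Y_1^{(p)}\stackrel{d}{=}\beta_p^{-1}R_1G_1$, whose density at $0$ is just $e^{-0}=1$ (since $Y_1^{(p)}$ has density $e^{-\beta_p^p|x|^p}$), one obtains the identity $\E V_1^{-1/2}=\sqrt{\pi}/\beta_p$. Substituting $V_j = (\E V_1^{-1/2})^{-2}\bar V_j$ inside the $-1/2$ power gives
\[
\E\!\left(\sum_{j=1}^n a_j^2 V_j\right)^{-1/2} = \E V_1^{-1/2}\cdot \E\!\left(\sum_{j=1}^n a_j^2 \bar V_j\right)^{-1/2} = \frac{\sqrt{\pi}}{\beta_p}\,\E\!\left(\sum_{j=1}^n a_j^2 \bar V_j\right)^{-1/2},
\]
and plugging this into the previous display yields $f_a(0)=\E(\sum a_j^2\bar V_j)^{-1/2}$, as desired.

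The only genuine subtlety is justifying the Gaussian mixture representation itself, but this is precisely the content of Lemma 23(a) of \cite{ENT18} (with the stable scale mixture coming from the classical subordination of $e^{-|x|^p}$ by positive $p/2$-stable laws for $p<2$), so I would simply cite it. Everything else is bookkeeping with densities at $0$ and the normalisation of $\bar V_j$; the cross-check $a=e_1$ collapses both sides to $1$ and confirms the constants.
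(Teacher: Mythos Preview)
Your argument is correct and is essentially the same approach as the paper's: both exploit the Gaussian-mixture representation $Y_j^{(p)} \stackrel{d}{=} \beta_p^{-1}R_jG_j$ from \cite{ENT18}, reduce $\sum a_j Y_j^{(p)}$ to $\sqrt{\sum a_j^2 V_j}$ times a standard Gaussian, and fix the constant by plugging in $a=e_1$. The only cosmetic difference is that the paper works with the limit $\lim_{q\to 1-}\frac{1-q}{2}\E|\sum a_j Y_j^{(p)}|^{-q}$ from Lemma~\ref{lm:Bpn-sec-negative-moments} and uses $(1-q)\E|G_1|^{-q}\to\sqrt{2/\pi}$, whereas you invoke Corollary~\ref{cor:secBpn} to pass directly to $f_a(0)$ and then compute that density by conditioning on the $V_j$; these are two packagings of the same computation.
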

\begin{proof}
Using Lemma \ref{lm:Bpn-sec-negative-moments} and the above Gaussian mixture representation for the $Y_j^{(p)}$,
\begin{align*}
\frac{\vol_{n-1}(B_p^n \cap a^\perp)}{\vol_{n-1}(B_p^{n-1})} &= \lim_{q\to 1-} \frac{1-q}{2}  \mb{E}\Big|\sum_{j=1}^n a_jY_j^{(p)} \Big|^{-q} \\
&= \kappa_p\lim_{q\to 1-} (1-q)  \mb{E}\Big|\sum_{j=1}^n a_j\sqrt{V_j}G_j \Big|^{-q}
\end{align*}
for a positive constant $\kappa_p$ which depends only on $p$ (resulting from rescalings of the random variables involved). Since $\sum_{j=1}^n a_j\sqrt{V_j}G_j $ has the same distribution as $\sqrt{\sum a_j^2V_j}G_1$ and $(1-q)\E|G_1|^{-q}$ converges to $\sqrt{\frac{2}{\pi}}$ (twice the density at $0$) as $q \to 1-$, after further rescalings, we obtain
\[
\frac{\vol_{n-1}(B_p^n \cap a^\perp)}{\vol_{n-1}(B_p^{n-1})} = \kappa_p'\E\left(\sum_{j=1}^n a_j^2\bar V_j\right)^{-1/2}.
\]
Plugging in $a = e_1$ shows that $\kappa_p' = 1$.
\end{proof}

\begin{remark}\label{rem:V-integrability}
The above expectation is finite due to the fact that $\E W_\alpha^r<\infty$ iff $r<\alpha$. Indeed,
\[
\int_{0}^\infty t^{q-3/2}g_{p/2}(t^{-1}) \dd t = \int_{0}^\infty t^{-q-1/2}g_{p/2}(t) \dd t = \E W_{p/2}^{-q-1/2}
\]
thus $\E V_1^q < \infty$ as long as $-q - 1/2 < p/2$, that is $q > -\frac{p+1}{2}$. The above fact can be deduced from the asymptotic formulas (see, e.g. \cite{Mi})
\[
	g_\alpha(t) \sim_{t \to \infty} M_\alpha t^{-(1+\alpha)}, \qquad g_\alpha(t) \sim_{t \to 0^+} K_\alpha t^{-\frac{2-\alpha}{2(1-\alpha)}} \exp(A_\alpha t^{-\frac{\alpha}{1-\alpha}}).
\]
\end{remark}

\section{Stability: heuristic explanation of the proof}\label{sec:heur}

We are ready to proceed with the proofs of Theorem \ref{thm:stab}. First, we briefly outline them. We emphasise that, as already highlighted in the introduction, as different and disconnected from each other our arguments may seem, their common probabilistic underpinning is the negative moment approach which yields very convenient formulae for sections, amenable to a detailed analysis allowing not only to find the extremisers, but also to develop precise first order error terms.


To give a short overview: 
\eqref{eq:st-Bp-p<2-max} simply follows from Schur convexity, its reversal, 
\eqref{eq:st-Bp-p<2-min} is obtained from a formula involving negative moments combined with complete monotonicity allowing to invoke the Laplace transform to leverage independence, 
\eqref{eq:st-Bp-p>2} for $2 < p < \infty$ relies on viewing the volume of sections as the $\infty$-norm of an appropriate probability density which is estimated using peakedness and additional probabilistic tools, e.g. the Berry-Esseen theorem, whereas \eqref{eq:st-Bp-p>2} for $p = \infty$ follows from a more general stability result for an underlying Khinchin-type inequality, obtained thanks to negative moments, and, finally, \eqref{eq:st-Q-max} is established by a careful analysis of Ball's proof, souped-up with new insights gained from representations via negative moments allowing for certain self-improvements of Ball's inequality (in the spirit of \cite{DDS} which establishes an analogous stability result for Szarek's $L_1-L_2$ classical Khinchin inequality, with arguments based on discrete Fourier analysis). We begin with the results for the cube.

\section{Cube slicing}\label{sec:Q}

\subsection{Minimal hyperplane cube sections}\label{sec:Qmin}
Prior to Vaaler's work \cite{V79}, Hadwiger in \cite{H72} and independently Hensley in \cite{H79} established that the minimal hyperplane sections of the cube are attained for coordinate subspaces. A different simple proof was later given in \cite{B86} (which was based on a direct minimisation of $\|f\|_\infty$ over even unimodal probability densities with fixed variance). Our method involving negative moments offers another simple approach with the advantage that it is well-suited to give a stability result. First we establish a robust version of a relevant Khinchin inequality.

\begin{theorem}\label{thm:spheres-stab}
Let $0 < p < 2$ and let $\xi_1, \dots, \xi_n$ be i.i.d. random vectors in $\R^d$ uniform on $S^{d-1}$, $d \geq 3$. For every $n \geq 1$ and real numbers $a_1, \dots, a_n$ such that $a_1^2+\dots + a_n^2 = 1$, we have
\[
\E\left|\sum_{j=1}^n a_j\xi_j\right|^{-p} \geq 1 + \frac{p(p+2)(2d-p-4)}{9d^2}\left(1-\sum_{j=1}^na_j^4\right).
\]
\end{theorem}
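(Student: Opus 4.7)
The plan is to write $|v|^2 = 1 + Y$ for $v = \sum_j a_j \xi_j$, explicitly compute the first two moments of $Y$, and then bound $\E(1+Y)^{-p/2}$ from below via a pointwise convex-analytic inequality tailored to the distribution of $Y$. Since $|\xi_j| = 1$ and $\sum_j a_j^2 = 1$,
\[
|v|^2 = 1 + Y, \qquad Y := 2 \sum_{1 \leq i < j \leq n} a_i a_j \langle \xi_i, \xi_j\rangle.
\]
Independence of the $\xi_j$, combined with $\E \xi_j = 0$ and $\E\xi_j\xi_j^\top = d^{-1} I_d$ (rotational invariance of the uniform law on $S^{d-1}$), yields
\[
\E Y = 0, \qquad \E Y^2 = \frac{4}{d}\sum_{i<j} a_i^2 a_j^2 = \frac{2}{d}\Big(1 - \sum_j a_j^4\Big),
\]
so the target deficit $1 - \sum_j a_j^4$ is proportional to $\Var(Y)$, reducing the whole problem to lower-bounding $\E(1+Y)^{-p/2}$ in terms of $\Var(Y)$.

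The analytic engine of the proof will be a piecewise pointwise bound on $(1+y)^{-p/2}$ that captures the variance of $Y$. I would use
\[
(1+y)^{-p/2} \geq \begin{cases} 1 - \tfrac{p}{2} y + \kappa y^2, & -1 \leq y \leq M, \\ 0, & y > M, \end{cases}
\]
with the parameters coupled by the requirement that the quadratic vanishes at $y = M$, forcing $\kappa = (pM/2 - 1)/M^2$ and $M > 2/p$. Verifying the inequality on $[-1, M]$ reduces to checking non-negativity of the auxiliary function $g(y) = (1+y)^{-p/2} - 1 + (p/2)y - \kappa y^2$, which satisfies $g(0) = g'(0) = 0$ and whose sign is controlled by the explicit formula $g''(y) = \tfrac{p(p+2)}{4}(1+y)^{-p/2-2} - 2\kappa$ together with endpoint values.

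Taking expectation of the pointwise inequality and using $\E Y = 0$ gives
\[
\E(1+Y)^{-p/2} \geq 1 + \kappa \, \E Y^2 - \E\big[(1 - \tfrac{p}{2} Y + \kappa Y^2)\mathbf{1}_{Y > M}\big].
\]
I would control the tail correction via higher moments of $Y$: a bound of the form $\E Y^4 \leq C_d (\E Y^2)^2$ with $C_d$ independent of $n$ can be obtained through Wick-type pairings in the expansion of $Y^4$ using the isotropy identities $\E\langle \xi_i, \xi_j\rangle^2 = 1/d$ and analogues for four-way cross terms. Combining this with Chebyshev controls $\P(Y > M)$ and $\E[Y^2 \mathbf{1}_{Y > M}]$. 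Finally, optimizing over $M$ (equivalently over $\kappa$) will pin down the claimed constant: after absorbing the factor $2/d$ from $\E Y^2$, the exact coefficient $\frac{p(p+2)(2d-p-4)}{9d^2}$ should emerge from the particular balance between quadratic gain and tail loss.

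The main obstacle will be the $n$-free tail control: since $Y$ can in principle be as large as $n-1$, any naive tail bound picks up unwanted $n$-dependence, and the fourth-moment constant $C_d$ really must be independent of $n$. This in turn forces careful exploitation of the structure of $Y$ as a quadratic form on a product of spheres. An alternative route worth exploring is to establish Schur concavity of $(a_j^2) \mapsto \E|v|^{-p}$ on the simplex $\sum a_j^2 = 1$, which would reduce the general case to a two-variable stability estimate amenable to direct computation via the one-parameter identity $\E|a\xi_1 + b\xi_2|^{-p} = \E(1 + 2ab\,\xi_1^{(1)})^{-p/2}$. Either way, the peculiar factor $\tfrac{1}{9}$ in the final constant signals a bespoke optimization rather than a universal estimate.
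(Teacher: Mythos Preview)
Your setup is exactly how the paper begins: write $|v|^2 = 1+Y$ with $Y = 2\sum_{i<j} a_ia_j\langle\xi_i,\xi_j\rangle$, note $\E Y=0$, and compute $\E Y^2 = \tfrac{2}{d}(1-\sum_j a_j^4)$. The divergence from the paper is in the pointwise step, and there the proposal has a genuine gap.

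The paper does \emph{not} use a quadratic-with-cutoff bound plus fourth-moment tail control. It proves instead a single \emph{cubic} inequality valid on all of $(-1,\infty)$,
\[
(1+x)^{-p/2} \;\geq\; 1 - \frac{p}{2}x + \frac{p(p+2)}{9}x^2 - \frac{p(p+2)(p+4)}{72}x^3,
\]
and then computes $\E Y^3$ explicitly. By symmetry the only surviving terms are triple products $\langle\xi_i,\xi_j\rangle\langle\xi_j,\xi_k\rangle\langle\xi_i,\xi_k\rangle$ with $i<j<k$, and rotational invariance gives each of these expectation $1/d^2$; hence $\E Y^3 = \tfrac{8}{d^2}(1-3s_2+2s_3)\le \tfrac{8}{d^2}(1-s_2)$, where $s_k=\sum_j a_j^{2k}$. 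Plugging into the cubic bound yields
\[
\frac{p(p+2)}{9}\cdot\frac{2}{d}(1-s_2) - \frac{p(p+2)(p+4)}{72}\cdot\frac{8}{d^2}(1-s_2) = \frac{p(p+2)(2d-p-4)}{9d^2}(1-s_2),
\]
which is the stated constant on the nose.

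The gap in your plan is the assertion that the exact coefficient ``should emerge from the particular balance between quadratic gain and tail loss.'' It will not. The factor $(2d-p-4)$ is the signature of a $1/d$ term minus a $1/d^2$ term, produced cleanly by $\E Y^2$ and $\E Y^3$; a truncated-quadratic scheme with fourth-moment tail bounds can at best yield $\kappa\cdot\tfrac{2}{d}(1-s_2) - O\big((1-s_2)^2/d^2\big)$ for some $\kappa\le p^2/16$ (the maximal $\kappa$ under your coupling $\kappa=(pM/2-1)/M^2$), which is a different---and $a$-dependent---correction. So your route might salvage \emph{some} positive constant, but not the one in the statement, and the optimisation over $M$ you describe does not recover it. The ``bespoke'' $\tfrac{1}{9}$ you noticed is simply the coefficient that makes the elementary proof of the cubic lemma go through (via a reduction to $e^{-t}\ge \tfrac{8}{9}-\tfrac{2}{3}t$); there is no optimisation behind it. The Schur-concavity alternative you mention is likewise not the paper's route and would not by itself produce the quadratic deficit $1-\sum_j a_j^4$.
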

\begin{proof}
First we remark that a sharp inequality without the remainder term is a simple consequence of convexity. Indeed, for any $p>0$ we have
\begin{equation}\label{eq:hensley-easy}
\E\left|\sum_{j=1}^n a_j\xi_j\right|^{-p} = \E\left(\left|\sum_{j=1}^n a_j\xi_j\right|^2\right)^{-p/2} \geq \left(\E\left|\sum_{j=1}^n a_j\xi_j\right|^2\right)^{-p/2} = 1.
\end{equation}
To control the error in this estimate, a natural idea presents itself: we write 
\[
\left|\sum_{j=1}^n a_j\xi_j\right|^2 = 1 + Y
\]
with 
\[
Y = 2\sum_{i < j} a_ia_j\scal{\xi_i}{\xi_j}
\]
and seek a refinement of the pointwise bound $(1+x)^{-p/2} \geq 1 - \frac{p}{2}x$, $x > -1$ (resulting just from convexity) which gives \eqref{eq:hensley-easy}, in view of the fact that $Y > -1$ a.s. and $\E Y = 0$. We shall use the following lemma, the proof of which we defer for now (for simplicity, we did not try to optimise the numerical constants).


\begin{lemma}\label{lm:2point}
For every $p > 0$ and $x > -1$, we have
\[
(1+x)^{-p/2} \geq 1 - \frac{p}{2}x + \frac{p(p+2)}{9}x^2 -\frac{p(p+2)(p+4)}{72}x^3.
\]
\end{lemma}

This lemma yields
\[
\E\left|\sum_{j=1}^n a_j\xi_j\right|^{-p} = \E(1+Y)^{-p/2} \geq 1 + \frac{p(p+2)}{9}\E Y^2 - \frac{p(p+2)(p+4)}{72}\E Y^3.
\]
To compute $\E Y^2$ and $\E Y^3$, first note that thanks to rotational invariance and independence, for $i < j$,
\[
\E\scal{\xi_i}{\xi_j}^2 = \E \scal{\xi_i}{e_1}^2 = \frac{1}{d}
\]
and for $i < j < k$,
\begin{align*}
\E\scal{\xi_i}{\xi_j}\scal{\xi_j}{\xi_k}\scal{\xi_i}{\xi_k} &= \E\scal{\xi_i}{\xi_j}\scal{\xi_j}{e_1} \scal{\xi_i}{e_1} \\
&= \E\scal{\xi_j}{e_1}^2 \scal{\xi_i}{e_1}^2 + \sum_{l=2}^d \E\scal{\xi_i}{e_l}\scal{\xi_i}{e_1}\E\scal{\xi_j}{e_l}\scal{\xi_j}{e_1} \\
&= \E\scal{\xi_j}{e_1}^2 \E\scal{\xi_i}{e_1}^2 = \frac{1}{d^2},
\end{align*}
where in the second line we write $\scal{\xi_i}{\xi_j} = \sum_{l=1}^d \scal{\xi_i}{e_l}\scal{\xi_j}{e_l}$, use independence and the fact that vectors $\xi_i$ have uncorrelated components to see that the sum over $l \geq 2$ vanishes.
Thus, using symmetry again,
\[
\E Y^2 = 4\sum_{i<j}a_i^2a_j^2 \E\scal{\xi_i}{\xi_j}^2 = \frac{4}{d}\sum_{i<j}a_i^2a_j^2
\]
and
\[
\E Y^3 = 8\cdot 6\sum_{i<j<k}a_i^2a_j^2a_k^2\E\scal{\xi_i}{\xi_j}\scal{\xi_j}{\xi_k}\scal{\xi_i}{\xi_k} = \frac{48}{d^2}\sum_{i<j<k}a_i^2a_j^2a_k^2.
\]
Introducing, $s_l = \sum_{i=1}^n a_i^{2l}$, $l = 1, 2, \dots$, we have $s_1 = 1$ and using Newton identities for symmetric functions, we express $2\sum_{i < j} a_i^2a_j^2 = 1 - s_2$, $6\sum_{i<j<k}a_i^2a_j^2a_k^2 = 1-3s_2+2s_3$. Moreover, $s_3 \leq s_2$. As a result,
\begin{align*}
\E Y^2 &= \frac{2}{d}(1-s_2), \\
\E Y^3 &= \frac{8}{d^2}(1-3s_2+2s_3) \leq \frac{8}{d^2}(1-s_2).
\end{align*}
Therefore,
\begin{align*}
\E\left|\sum_{j=1}^n a_j\xi_j\right|^{-p} &\geq 1 + \frac{2p(p+2)}{9d}(1-s_2) - \frac{p(p+2)(p+4)}{9d^2}(1-s_2) \\
&= 1+\frac{p(p+2)(2d-p-4)}{9d^2}(1-s_2).
\end{align*}
\end{proof}

Now we are able to deduce a stability result for minimal hyperplane sections of the cube, \eqref{eq:st-Bp-p>2} for $p=\infty$. For convenience, we restate this here.
\begin{theorem}\label{thm:hensley-stab}
Let $a = (a_1,\dots,a_n)$ be a unit vector in $\R^n$ with $a_1 \geq a_2 \geq \dots \geq 0$. Then,
\[
\vol_{n-1}\left(Q_n \cap a^\perp\right) \geq 1 + \frac{1}{54}|a-e_1|^2.
\]
\end{theorem}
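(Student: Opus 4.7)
The plan is to combine the negative moment representation for central hyperplane sections of the cube with the stability of the corresponding Khinchin-type inequality on the sphere, both of which are available from the preceding development. No new probabilistic input is needed; what remains is to translate the ``spherical'' deficit $1-\sum_j a_j^4$ into the Euclidean deficit $|a-e_1|^2$.

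First I would invoke the K\"onig–Koldobsky formula of Lemma~\ref{lm:sec-cube-codim1} to write
\[
\vol_{n-1}(Q_n\cap a^\perp) \;=\; \E\left|\sum_{j=1}^n a_j\xi_j\right|^{-1},
\]
where $\xi_1,\dots,\xi_n$ are i.i.d.\ uniform on $S^2\subset\R^3$. The right-hand side is precisely the object controlled by Theorem~\ref{thm:spheres-stab} in the case $p=1$, $d=3$. Plugging these values into the constant $\frac{p(p+2)(2d-p-4)}{9d^2}$ gives $\frac{1\cdot 3\cdot 1}{81}=\frac{1}{27}$, so we obtain
\[
\vol_{n-1}(Q_n\cap a^\perp) \;\geq\; 1+\tfrac{1}{27}\bigl(1-\textstyle\sum_j a_j^4\bigr).
\]

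It remains to compare $1-\sum_j a_j^4$ with $|a-e_1|^2$. Since $|a|=1$, a direct expansion yields $|a-e_1|^2=2-2a_1$. On the other hand, the ordering assumption $a_1\geq a_j\geq 0$ and $\sum_j a_j^2=1$ give
\[
\sum_{j=1}^n a_j^4 \;\leq\; a_1^2\sum_{j=1}^n a_j^2 \;=\; a_1^2 \;\leq\; a_1,
\]
so $1-\sum_j a_j^4\geq 1-a_1=\tfrac{1}{2}|a-e_1|^2$. Substituting this into the previous display produces the claimed bound
\[
\vol_{n-1}(Q_n\cap a^\perp) \;\geq\; 1+\tfrac{1}{54}|a-e_1|^2.
\]

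There is no hard step here: the content has been front-loaded into Theorem~\ref{thm:spheres-stab}, whose proof is the real work (the refined three-term expansion of $(1+x)^{-p/2}$ from Lemma~\ref{lm:2point} combined with the explicit covariance computations on $S^{d-1}$). Once that is in hand, the present theorem is just a numerical specialisation followed by the elementary inequality $\sum a_j^4\leq a_1$. The only thing I would double-check in writing the final version is that the monotonicity assumption on $a$ is used only to control $a_j^4\leq a_1^2 a_j^2$, and that no sharper estimate is worth pursuing — since $(1-a_1)(1+a_1)\geq 1-a_1$ is tight as $a_1\to 1$, the factor $\tfrac12$ lost at this step is essentially unavoidable within this method.
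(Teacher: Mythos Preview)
Your proof is correct and is essentially identical to the paper's own argument: both invoke Lemma~\ref{lm:sec-cube-codim1} and Theorem~\ref{thm:spheres-stab} with $p=1$, $d=3$ to obtain the constant $\tfrac{1}{27}$, and then use the same chain $\tfrac12|a-e_1|^2=1-a_1\leq 1-a_1^2\leq 1-\sum_j a_j^4$ to pass to the Euclidean deficit.
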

\begin{proof}
Note that under the assumption on $a$,
\begin{align*}
\frac{1}{2}|a-e_1|^2 = \frac12\left((1-a_1)^2 + \sum_{i =2}^n a_i^2\right) = 1-a_1\leq 1-a_1^2 &= 1 - \sum_i a_1^2a_i^2 \leq 1 - \sum_i a_i^4.
\end{align*}
Thus the assertion follows immediately from Theorem \ref{thm:spheres-stab} applied to $p=1$ and $d=3$, in view of Lemma \ref{lm:sec-cube-codim1}.  
\end{proof}

\begin{remark}\label{rem:cube-opt}
The dependence on $\delta(a) = 1 - \sum_{j=1}^n a_j^4$ in Theorem \ref{thm:spheres-stab} modulo a constant factor is best possible: there are examples of unit vectors $a$ with $\delta(a) \to 0$ for which $\E|\sum a_j\xi_j|^{-p} - 1 = O_{p,d}(\delta(a))$. For instance, take $a = (\sqrt{1-\e}, \sqrt{\e}, 0, \dots, 0)$ with $\e<\frac{1}{16}$. Since for $0<p<2$ and $x \in [-\frac12,1]$ one has $(1+x)^{-\frac{p}{2}} \leq 1-\frac{p}{2}x+8x^2$ (use Taylor formula with Lagrange remainder), it follows that
\[
\E|\sum a_j\xi_j|^{-p} = \E(1 + 2\sqrt{\e(1-\e)}\scal{\xi_1}{\xi_2})^{-p/2} \leq 1+32 \e(1-\e)\E\scal{\xi_1}{\xi_2}^2 = 1+\frac{32 \e(1-\e)}{d}. 
\]
Since $1-\sum_{j=1}^n a_j^4 = 2\e(1-\e)$, we get $\E|\sum a_j\xi_j|^{-p} \leq 1+ \frac{16}{d}(1-\sum_{j=1}^n a_j^4 )$.

In particular, the same remark applies to Theorem \ref{thm:hensley-stab} as well.
\end{remark}


It remains to prove the point-wise inequality we used.

\begin{proof}[Proof of Lemma \ref{lm:2point}]
From the Taylor formula with Lagrange reminder for the function $(1+x)^{-\frac{p}{2}}$ one gets that for $x \leq \frac{2}{p+4}$ 
\[
(1+x)^{-p/2} - 1 + \frac{p}{2} x \geq  \frac{p(p+2)}{8}  x^2 - \frac{p(p+2)(p+4)}{48} x^3 \geq \frac{p(p+2)}{9}x^2 -\frac{p(p+2)(p+4)}{72}x^3.
\]
We now show how to treat the case $x \geq 0$. Define 
\[
\psi(x)= (1+x)^{-p/2} - 1 + \frac{p}{2}x - \frac{p(p+2)}{9}x^2 +\frac{p(p+2)(p+4)}{72}x^3.
\] 
Our goal is to prove that $\psi(x) \geq 0$ for $x \geq 0$. Note that $\psi(0)=\psi'(0)=0$. Thus it suffices to show that for $x \geq 0$ we have $\psi''(x) \geq 0$. This is equivalent to $(1+x)^{-\frac{p+4}{2}} \geq \frac89-\frac13(p+4)x$. Define $\alpha=\frac12(p+4)$. Our inequality reads $(1+x)^{-\alpha} \geq \frac89-\frac23 \alpha x$. We shall verify this for arbitrary $\alpha, x>0$. Let $t=\alpha x$. Rewriting gives $(1+\frac{t}{\alpha})^{-\alpha} \geq \frac89 - \frac23 t$. We have $(1+\frac{t}{\alpha})^{-\alpha} \geq e^{-t}$ (take the logarithm and use the inequality $\ln(1+y)\leq y$) and thus it is enough to show that $e^{-t} \geq \frac89 - \frac23 t$ for $t >0$. The function $h(t)=e^{-t} - \frac89 + \frac23 t$ has a minimum for $t=\ln(\frac32)$. It is enough to verify that $\frac23 \geq \frac89 - \frac23 \ln(\frac32)$.  This is $\ln(\frac32) \geq \frac13$ which is true.
\end{proof}

\subsection{Maximal hyperplane cube sections}\label{sec:Qmax}

Our goal here is to prove \eqref{eq:st-Q-max}. We recall two formulae (see Lemma \ref{lm:sec-cube-codim1} and Remark \ref{rem:Qn-sec}),
\begin{align}
\vol_{n-1}(Q_n \cap a^\perp) &= \E\left|\sum_{j=1}^n a_j\xi_j\right|^{-1} \label{eq:Q-spheres}\\
&= \frac{2}{\pi}\int_0^\infty \prod_{j=1}^n \frac{\sin(a_jt)}{a_jt} \dd t, \label{eq:Q-Fourier}
\end{align}
as well as the fact that
\begin{equation}\label{eq:bus}
\|a\|_{\text{Bus}} = \frac{|a|}{\vol_{n-1}(Q_n \cap a^\perp)}
\end{equation}
defines a norm on $\R^n$, thanks to Busemann's theorem (see \cite{Bus}, or, e.g. Theorem 3.9 in \cite{MilPa}). It follows that the function $a \mapsto \vol_{n-1}(Q_n \cap a^\perp)$ is $2$-Lipschitz on the unit sphere.

\begin{lemma}\label{lm:Lip}
For every unit vectors $a$, $b$ in $\R^n$, we have
\[
\left| \vol_{n-1}(Q_n \cap a^\perp) - \vol_{n-1}(Q_n \cap b^\perp) \right| \leq 2|a-b|.
\]
\end{lemma}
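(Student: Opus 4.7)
The plan is to deduce the Lipschitz bound directly from the Busemann norm identity \eqref{eq:bus}, coupled with the two-sided bound $1 \le \vol_{n-1}(Q_n \cap a^\perp) \le \sqrt{2}$ valid on unit vectors (the lower bound being Hadwiger--Hensley, the upper being Ball's theorem, both recalled in Section \ref{sec:known-res}). Writing $F(a) = \vol_{n-1}(Q_n \cap a^\perp)$, the identity \eqref{eq:bus} reads $\|a\|_{\text{Bus}} = |a|/F(a)$, so for a unit vector $a$ we have $F(a) = 1/\|a\|_{\text{Bus}}$, and the two-sided bound for $F$ translates into
\[
\tfrac{1}{\sqrt{2}} \le \|a\|_{\text{Bus}} \le 1, \qquad a \in S^{n-1}.
\]
More generally, by $1$-homogeneity, $\|v\|_{\text{Bus}} = |v|/F(v/|v|) \le |v|$ for every nonzero $v \in \R^n$.

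For unit vectors $a,b \in S^{n-1}$, I would then write
\[
F(a) - F(b) \;=\; \frac{1}{\|a\|_{\text{Bus}}} - \frac{1}{\|b\|_{\text{Bus}}} \;=\; \frac{\|b\|_{\text{Bus}} - \|a\|_{\text{Bus}}}{\|a\|_{\text{Bus}}\,\|b\|_{\text{Bus}}}.
\]
The reverse triangle inequality for the norm $\|\cdot\|_{\text{Bus}}$ bounds the numerator by $\|a-b\|_{\text{Bus}}$, which by the homogeneous comparison above is at most $|a-b|$. The denominator is bounded below by $1/2$ thanks to Ball's upper bound. Combining these two estimates yields $|F(a) - F(b)| \le 2|a-b|$, which is the claim.

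Conceptually, there is no real obstacle here: the entire argument is a two-line calculation once one knows (i) that $F$ is the reciprocal of a norm on unit vectors and (ii) that $F$ is bounded above by $\sqrt{2}$. The only place where one must be mindful is ensuring that the two inputs used (Busemann's theorem giving \eqref{eq:bus}, and Ball's sharp bound $F \le \sqrt{2}$) are legitimately available at this point in the text; both have been recalled or cited earlier, so invoking them is immediate. The factor $2$ on the right hand side is dictated exactly by Ball's constant $(\sqrt{2})^2 = 2$, so this proof automatically gives the sharp-up-to-that-constant Lipschitz bound stated in the lemma.
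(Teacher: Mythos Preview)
Your proof is correct and is essentially identical to the paper's: both write $|F(a)-F(b)| = F(a)F(b)\,\bigl|\|a\|_{\text{Bus}}-\|b\|_{\text{Bus}}\bigr|$, bound the difference of norms by $\|a-b\|_{\text{Bus}}\le |a-b|/F\!\left(\tfrac{a-b}{|a-b|}\right)$ via the triangle inequality, and then invoke $1\le F\le\sqrt{2}$ to conclude. The only cosmetic difference is that the paper keeps $F(a-b)$ in the denominator before estimating, whereas you estimate $\|a-b\|_{\text{Bus}}\le|a-b|$ first.
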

\begin{proof}
Letting $F(a) = \vol_{n-1}(Q_n \cap a^\perp)$, by the triangle inequality we have
\[
\frac{\left| F(a) - F(b) \right|}{F(a)F(b)} =\left|\|a\|_{\text{Bus}} - \|b\|_{\text{Bus}}\right| \leq \|a-b\|_{\text{Bus}} = \frac{|a-b|}{F(a-b)}.
\]
Using that $1 \leq F(x) \leq \sqrt{2}$ for every vector $x$ concludes the proof.
\end{proof}

We will also need the following observation.

\begin{lemma}\label{lm:X+Y}
Let $X$ and $Y$ be two independent rotationally invariant random vectors in $\R^3$. Then
\[
\E|X+Y|^{-1} = \E\min\left\{|X|^{-1},|Y|^{-1}\right\} \leq \min\{ \E |X|^{-1}, \E |Y|^{-1}\}.
\]
In particular,
\[
\vol_{n-1}(Q_n \cap a^\perp) \leq \min\{|a_j|^{-1}\}.
\]
\end{lemma}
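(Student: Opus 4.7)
The plan is to reduce the first equality to Newton's shell theorem in $\R^3$: for a point $y \in \R^3$ and a random vector $Z$ uniform on the sphere of radius $s$ in $\R^3$, one has
\[
\E|Z-y|^{-1} = \min\!\bigl(\tfrac{1}{s},\tfrac{1}{|y|}\bigr).
\]
This classical identity can be verified directly by choosing coordinates so that $y=(0,0,|y|)$, parametrising $Z = s(\sin\theta\cos\phi,\sin\theta\sin\phi,\cos\theta)$, and substituting $u=\cos\theta$ to reduce to the elementary one-variable integral
\[
\tfrac{1}{2}\int_{-1}^{1} \frac{du}{\sqrt{s^2 - 2s|y|u + |y|^2}} = \frac{(s+|y|)-\bigl|s-|y|\bigr|}{2s|y|},
\]
which evaluates to $\min(1/s,1/|y|)$ after unwinding the absolute value.

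Next, to establish $\E|X+Y|^{-1} = \E\min\{|X|^{-1},|Y|^{-1}\}$, I would condition on the pair of radii $(|X|,|Y|) = (s,r)$. By rotational invariance and independence, conditionally $X$ is uniform on the sphere of radius $s$, independently of $Y$. The rotational invariance of the conditional law of $X$ allows me to replace $Y$ by any fixed vector of length $r$, reducing the conditional expectation to the shell-theorem setting above; hence
\[
\E\bigl[\,|X+Y|^{-1}\bigm|\,|X|=s,\,|Y|=r\,\bigr] = \min(1/s,1/r).
\]
Integrating against the joint law of $(|X|,|Y|)$ yields the claimed identity. The second bound $\E\min\{|X|^{-1},|Y|^{-1}\}\leq \min\{\E|X|^{-1},\E|Y|^{-1}\}$ is then immediate from the pointwise inequalities $\min(a,b)\leq a$ and $\min(a,b)\leq b$.

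For the concluding ``in particular'' assertion, formula \eqref{eq:Q-spheres} gives $\vol_{n-1}(Q_n\cap a^\perp) = \E\bigl|\sum_j a_j\xi_j\bigr|^{-1}$ with i.i.d.\ $\xi_j$ uniform on $S^2$. Since sums of independent rotationally invariant vectors remain rotationally invariant, for any fixed index $j$ I split $\sum_k a_k\xi_k = a_j\xi_j + \sum_{k\neq j}a_k\xi_k$ into two independent rotationally invariant summands and apply the first part of the lemma to obtain $\vol_{n-1}(Q_n\cap a^\perp)\leq \E|a_j\xi_j|^{-1} = |a_j|^{-1}$, using that $|\xi_j|=1$. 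Taking the minimum over $j$ then completes the proof.

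The only substantive (and still mild) obstacle is the shell theorem computation itself, which is a standard elementary integration. A minor subtlety is the possibility that $|X|=0$ or $|Y|=0$ carries positive mass; in that case both sides of the main identity consistently equal $+\infty$, so no extra argument is needed.
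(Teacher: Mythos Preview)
Your proposal is correct and essentially identical to the paper's proof. Both reduce to the same elementary integral $\tfrac{1}{2}\int_{-1}^{1}(r^2+s^2+2rsu)^{-1/2}\,du$; the paper reaches it by writing $X=|X|\xi_1$, $Y=|Y|\xi_2$ and using that $\langle\xi_1,\xi_2\rangle$ is uniform on $[-1,1]$, while you phrase the same computation as Newton's shell theorem and condition on $(|X|,|Y|)$, and the ``in particular'' step is handled the same way (split off one summand $a_j\xi_j$).
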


\begin{proof}
Since $X$ and $Y$ are rotationally invariant, their distributions can be written as $|X| \xi_1$ and $|Y| \xi_2$, where $\xi_1, \xi_2$ are uniform on $S^2$,  chosen independently of $X$ and $Y$. By conditioning  on $X$ and $Y$, it suffices to verify the identity $\E_{\xi_1, \xi_2} |r \xi_1+s \xi_2|^{-1}=\min(r,s)^{-1}$. Note that by rotation invariance $\scal{\xi_1}{\xi_2}$ has the same distribution as $\scal{\xi_1}{e_1}$, that is a uniform distribution on $[-1,1]$. Therefore
\begin{align*}
	\E_{\xi_1, \xi_2} |r \xi_1+s \xi_2|^{-1} & = \E_{\xi_1, \xi_2} (|r \xi_1+s \xi_2|^2)^{-1/2} = \frac12\int_{-1}^1 (r^2+s^2+2rs u)^{-1/2} \dd u  \\
	& = \frac{(r^2+s^2+2rs u)^{1/2}}{2rs}\Big|_{-1}^1 = \frac{|r+s|-|r-s|}{2rs} = \frac{\min\{r,s\}}{rs} = \min\{r^{-1},s^{-1}\}^{-1}.
\end{align*}

To prove the second part it suffices to take $X=\sum_{j=1}^{n-1} a_j \xi_j$, $Y= a_n \xi_n$ and use the inequality $\E|X+Y|^{-1} \leq \E|Y|^{-1}$.
\end{proof}

Since the maximal section has volume $\sqrt{2}$, that is $\vol_{n-1}(Q_n \cap (\frac{e_1+e_2}{\sqrt{2}})^\perp) = \sqrt{2}$, our stability result \eqref{eq:st-Q-max} for maximal sections of the cube can be equivalently stated as follows
\begin{equation}\label{eq:st-Q-max'}
\vol_{n-1}(Q_n \cap a^\perp) \leq \sqrt{2} - c_0\left|a - \frac{e_1+e_2}{\sqrt{2}}\right|,
\end{equation}
for every $n$ and every unit vector $a$ in $\R^n$ with $a_1 \geq a_2 \geq \dots \geq a_n \geq 0$, for some universal constant $c_0$.

The proof involves different arguments, depending on whether $a$ is \emph{close} to the extremiser or not and whether its largest coordinate is \emph{large} or not. We assume throughout that $a$ is a unit vector in $\R^n$ with $a_1 \geq a_2 \geq \dots \geq a_n \geq 0$ and set
\[
\delta(a) = \left|a - \frac{e_1+e_2}{\sqrt{2}}\right|^2 = 2 - \sqrt{2}(a_1+a_2).
\]

For vectors $a$ \emph{close} to the extremiser, we have the following \emph{local} stability result (it is to some extent in the spirit of Lemma 3.7 from \cite{DDS}).

\begin{lemma}\label{lm:delta<}
There are universal constants $\delta_0 \in (0,\frac{1}{\sqrt{2}})$ and $c_0>0$ such that \eqref{eq:st-Q-max'} holds for every $a$ with $\delta(a) \leq \delta_0$.
\end{lemma}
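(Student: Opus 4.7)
The plan is to combine the probabilistic formula \eqref{eq:Q-spheres} with Lemma \ref{lm:X+Y} to obtain a closed-form integrand, then perform a case analysis near the extremizer. Decompose $\sum_{j=1}^n a_j\xi_j = X + Y$ with $X := a_1\xi_1 + a_2\xi_2$ and $Y := \sum_{j\geq 3}a_j\xi_j$; these are independent and rotationally invariant in $\R^3$. Lemma \ref{lm:X+Y} gives
\[
\vol_{n-1}(Q_n\cap a^\perp) = \E|X+Y|^{-1} = \E\bigl[\max(|X|,|Y|)^{-1}\bigr].
\]
Since $\scal{\xi_1}{\xi_2}$ is uniform on $[-1,1]$, $|X|^2 = a_1^2 + a_2^2 + 2a_1a_2\scal{\xi_1}{\xi_2}$ is uniform on $[(a_1-a_2)^2,(a_1+a_2)^2]$, and $|X|$ has density $v/(2a_1a_2)$ on $[|a_1-a_2|,a_1+a_2]$. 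A direct integration yields
\[
\Phi(s) := \E\bigl[\max(|X|,s)^{-1}\bigr] = \tfrac{1}{a_1} - J(s),
\]
where $J \geq 0$ equals $0$ for $s \leq |a_1-a_2|$, equals $(s-|a_1-a_2|)^2/(4sa_1a_2)$ for $|a_1-a_2| \leq s \leq a_1+a_2$, and equals $1/a_1 - 1/s$ for $s \geq a_1+a_2$. Hence $\vol_{n-1}(Q_n\cap a^\perp) = 1/a_1 - \E J(|Y|) \leq 1/a_1$.

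Fix a small $\gamma > 0$. In \emph{Case~A}, $|a_1-a_2| \geq \gamma\sqrt{\delta}$: using $\delta = 2 - \sqrt{2}(a_1+a_2)$,
\[
a_1 = \tfrac12(a_1+a_2) + \tfrac12|a_1-a_2| \geq \tfrac{1}{\sqrt{2}} - \tfrac{\delta}{2\sqrt{2}} + \tfrac{\gamma}{2}\sqrt{\delta} \geq \tfrac{1}{\sqrt{2}} + \tfrac{\gamma}{3}\sqrt{\delta}
\]
for $\delta$ sufficiently small, and thus $\vol \leq 1/a_1 \leq \sqrt{2} - c_A\sqrt{\delta}$. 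In \emph{Case~B}, $|a_1-a_2| < \gamma\sqrt{\delta}$: the identity $\sum_{j\geq 3}a_j^2 = \delta - \tfrac12(a_1-a_2)^2 - O(\delta^2)$ gives $r := \sqrt{\sum_{j\geq 3}a_j^2} \geq \sqrt{\delta}/2$ (for $\gamma \leq 1$ and small $\delta$), while $a_1 \geq \tfrac12(a_1+a_2) \geq 1/\sqrt{2} - O(\delta)$ gives $1/a_1 \leq \sqrt{2} + O(\delta)$; so it suffices to show $\E J(|Y|) \geq c_B\sqrt{\delta}$ with $c_B$ universal.

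Second- and fourth-moment calculations on $Y$ (using $\E\scal{\xi_i}{\xi_j}^2 = 1/3$, as in the proof of Theorem~\ref{thm:spheres-stab}) give $\E|Y|^2 = r^2$ and $\E|Y|^4 \leq \tfrac{5}{3}r^4$. Paley--Zygmund and Chebyshev then produce universal constants $0 < c_- < c_+$ and $p_0 > 0$ with $\pp(c_-r \leq |Y| \leq c_+r) \geq p_0$, all independent of $n$. For $\gamma$ small enough and $\delta$ small, this event implies $|Y| \in [|a_1-a_2|, a_1+a_2]$, $|Y| - |a_1-a_2| \geq c\sqrt{\delta}$ and $|Y| \leq C\sqrt{\delta}$, so the closed form forces $J(|Y|) \geq c'\sqrt{\delta}$, whence $\E J(|Y|) \geq p_0 c'\sqrt{\delta}$. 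Combined with $1/a_1 \leq \sqrt{2}+O(\delta)$, this closes Case~B. The main obstacle will be ensuring dimension-independence of $p_0$ in Case~B: since $n$ is arbitrary, the argument that $|Y|$ has positive mass on a scale of $r$ must use only the pairwise correlation structure of the $\xi_j$'s. The universal bound $\E|Y|^4 \leq \tfrac{5}{3}r^4$, coming from $\E\scal{\xi_i}{\xi_j}^2 = 1/3$, is exactly what makes the Paley--Zygmund step dimension-free.
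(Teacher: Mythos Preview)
Your proof is correct and follows a genuinely different route from the paper's.

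Both arguments start identically: decompose $\sum a_j\xi_j = X+Y$ with $X=a_1\xi_1+a_2\xi_2$, $Y=\sum_{j\geq 3}a_j\xi_j$, and apply Lemma~\ref{lm:X+Y} to obtain $\vol_{n-1}(Q_n\cap a^\perp)=\E\min\{|X|^{-1},|Y|^{-1}\}$. From here the paths diverge. The paper integrates out $Y$ \emph{first}, invoking Ball's inequality itself as a bootstrap to replace $|Y|^{-1}$ by the deterministic bound $\sqrt{2}(1-a_1^2-a_2^2)^{-1/2}$ via $\E_Y\min\{|X|^{-1},|Y|^{-1}\}\leq\min\{|X|^{-1},\E_Y|Y|^{-1}\}$; the remaining expectation in $X$ is then computed exactly as a function $h(s,\rho)$ of $s=a_1+a_2$ and $\rho=a_1^2+a_2^2$, and the proof finishes by showing $\rho\mapsto h(s,\rho)$ is decreasing and evaluating at $\rho=s^2/2$. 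You instead integrate out $X$ first to obtain the closed form $1/a_1-J(s)$, and then lower-bound $\E_Y J(|Y|)$ by a dimension-free second/fourth moment argument (Paley--Zygmund plus Chebyshev, using $\E|Y|^4\leq\tfrac53 r^4$).

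What each buys: the paper's route reduces everything to an explicit elementary function, which is clean and potentially yields sharper numerical constants, but it relies on Ball's theorem (the very result being quantified) as an input. Your route is logically more self-contained---no appeal to Ball's inequality---and the moment method makes the dimension-independence transparent, at the cost of looser constants and a softer argument. The two case splits are also parallel in spirit: the paper's split on whether $u_0\leq-1$ is essentially a split on the size of $a_1-a_2$ relative to $r$, which is what your $|a_1-a_2|\gtrless\gamma\sqrt{\delta}$ threshold captures.
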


For vectors $a$ \emph{away} from the extremiser with largest coordinate sufficiently close to $\frac{1}{\sqrt{2}}$, we prove the following lemma.

\begin{lemma}\label{lm:delta>}
Let $\delta_0$ be the constant from Lemma \ref{lm:delta<}. There are positive universal constants $\gamma_0, c_1$  such that 
\begin{equation}\label{eq:delta>}
\vol_{n-1}(Q_n \cap a^\perp) \leq \sqrt{2} - c_1
\end{equation}
holds for every $a$ with $\delta(a) > \delta_0$ and $a_1 \leq \frac{1}{\sqrt{2}} + \gamma_0$.
\end{lemma}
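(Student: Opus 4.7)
The plan is a compactness/contradiction argument carried out in $\ell^2(\mathbb{N})$. Assume the conclusion fails. Then we can extract a sequence of dimensions $n_k$ and sorted non-negative unit vectors $a^{(k)}\in S^{n_k-1}$ with $\delta(a^{(k)})>\delta_0$, $a^{(k)}_1\leq \tfrac{1}{\sqrt{2}}+\tfrac{1}{k}$, yet $\vol_{n_k-1}(Q_{n_k}\cap(a^{(k)})^\perp)\to\sqrt{2}$. Applying Lemma~\ref{lm:X+Y} with $X=a^{(k)}_1\xi_1$ and $Y=\sum_{j\geq 2}a^{(k)}_j\xi_j$ already yields $\vol(a^{(k)})\leq 1/a^{(k)}_1$, which forces $a^{(k)}_1\to 1/\sqrt{2}$. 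First I would embed each $a^{(k)}$ into $\ell^2(\mathbb{N})$ by zero-padding and, via a diagonal extraction, pass to a subsequence with coordinatewise limit $a^{(k)}_j\to a^*_j$; the limit $a^*$ is non-negative and non-increasing, with $a^*_1=1/\sqrt{2}$, $a^*_1+a^*_2\leq \sqrt{2}-\delta_0/\sqrt{2}$ (so $a^*_2$ stays bounded strictly below $1/\sqrt{2}$), and $\sum_j(a^*_j)^2=1-s$ for some $s\in[0,1/2]$.

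In the no-escape case $s=0$, pointwise convergence combined with norm convergence gives $a^{(k)}\to a^*$ in $\ell^2$. Using the Lipschitz bound (Lemma~\ref{lm:Lip}) together with a truncation at a large but fixed index to handle the finite/infinite mismatch, I obtain $\vol(a^{(k)})\to\vol(a^*)$; but since $a^*$ is not Ball's extremiser (which is unique up to signed permutations), Ball's inequality gives $\vol(a^*)<\sqrt{2}$, a contradiction.

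In the mass-escape case $s>0$, the crude bound $\vol\leq 1/a_1$ must be improved strictly. Here the key is the exact identity implicit in the proof of Lemma~\ref{lm:X+Y}:
\[
\vol(a)=\E\,\frac{1}{\max(a_1,|W|)},\qquad W=\sum_{j\geq 2}a_j\xi_j,
\]
which rearranges to
\[
\frac{1}{a_1}-\vol(a)=\frac{1}{a_1}\,\E\,\frac{(|W|-a_1)_+}{|W|}.
\]
Since for every fixed $N$ we have $\sum_{j>N}(a^{(k)}_j)^2\to 1-\sum_{j\leq N}(a^*_j)^2\geq s$, a second-moment computation using $\E\langle\xi_i,\xi_j\rangle^2=1/3$ yields a uniform lower bound $\Var(|W^{(k)}|^2)\geq c\,s>0$. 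Combined with $\E|W^{(k)}|^2\to 1/2=(a^{(k)}_1)^2$, a Paley--Zygmund type anti-concentration estimate then gives a universal $\varepsilon>0$ with a positive lower bound on $\pp(|W^{(k)}|\geq a^{(k)}_1+\varepsilon)$, which translates into a uniform gap $1/a^{(k)}_1-\vol(a^{(k)})\geq c'>0$, again contradicting $\vol(a^{(k)})\to\sqrt{2}$.

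The main obstacle is Case II: turning the weak information that the escaping mass $s>0$ merely forces a positive variance of $|W^{(k)}|^2$ into a quantitative anti-concentration statement for $|W^{(k)}|$ around $a^{(k)}_1$, uniformly in $k$. Once this is in hand the rearranged Lemma~\ref{lm:X+Y} bound directly yields the required dimension-free improvement on $1/a_1$.
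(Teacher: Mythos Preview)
Your compactness route is genuinely different from the paper's direct argument. The paper splits on $a_1$: when $a_1\le 1/\sqrt2$, it uses Ball's H\"older bound $\vol(a)\le\prod_j\Psi(a_j^{-2})^{a_j^2}$ together with the quantitative fact that $\sup_{s\ge s_0}\Psi(s)<\sqrt2$ for every $s_0>2$ (applicable to each $j\ge2$ since $\delta(a)>\delta_0$ forces $a_2<\tfrac{1}{\sqrt2}-\tfrac{\delta_0}{2\sqrt2}$); when $\tfrac{1}{\sqrt2}<a_1\le \tfrac{1}{\sqrt2}+\gamma_0$, it perturbs $a$ to a nearby $b$ with $b_1=1/\sqrt2$ via the Lipschitz bound (Lemma~\ref{lm:Lip}) and reduces to the previous case or to Lemma~\ref{lm:delta<}.

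Your argument, however, has a real gap. The bound $\vol(a^{(k)})\le 1/a^{(k)}_1$ combined with $\vol(a^{(k)})\to\sqrt2$ only yields $\limsup_k a^{(k)}_1\le 1/\sqrt2$; it does \emph{not} force $a^{(k)}_1\to 1/\sqrt2$. Pass to a subsequence with $a^{(k)}_1\to\alpha\in[0,1/\sqrt2]$. The regime $\alpha<1/\sqrt2$ together with mass escape $s>0$ is covered by neither of your cases: Case~I needs $s=0$ for the $\ell^2$-convergence step, while Case~II concludes $1/a^{(k)}_1-\vol(a^{(k)})\ge c'>0$, which says nothing once $1/a^{(k)}_1\to 1/\alpha>\sqrt2$. (Incidentally, the variance lower bound you state as ``$\ge cs$'' really comes from $a^{(k)}_2$ being bounded away from $1/\sqrt2$ via $\delta_0$, not from $s$; this only confirms that the usefulness of Case~II hinges entirely on $1/a^{(k)}_1\to\sqrt2$.) Closing this gap seems to require exactly the quantitative input the paper uses, namely $\Psi(s)\le\theta_0\sqrt2$ uniformly for $s\ge s_0>2$.

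Two smaller points. In Case~I, the sentence ``Ball's inequality gives $\vol(a^*)<\sqrt2$'' for an infinite $a^*\in S_{\ell^2}$ is not automatic: Ball's theorem and its equality case are finite-dimensional, and promoting the \emph{strict} inequality to $\ell^2$ again needs something like the $\Psi$ bound (the soft route via Lemma~\ref{lm:X+Y} only works when $a^*_1=1/\sqrt2$). In Case~II, Paley--Zygmund is a lower-tail device; you want an \emph{upper}-tail bound $\pp\bigl(|W|^2>\E|W|^2+\varepsilon\bigr)\ge p>0$, which is obtainable but needs control of the fourth moment of $|W|^2$ (hence the eighth moment of $W$), not just the second.
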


The remaining case is straightforward: taking these two lemmas for granted, it is very easy to prove \eqref{eq:st-Q-max'}.

\begin{proof}[Proof of \eqref{eq:st-Q-max'}]
In view of Lemmas \ref{lm:delta<} and \ref{lm:delta>}, it remains to consider the case when $a_1 > \frac{1}{\sqrt{2}} + \gamma_0$. From Lemma \ref{lm:X+Y}, we have
\[
\vol_{n-1}(Q_n \cap a^\perp) \leq \frac{1}{a_1} < \frac{1}{1/\sqrt{2} + \gamma_0} < \sqrt{2} - \gamma_0 < \sqrt{2} - \frac{\gamma_0}{\sqrt{2}}\sqrt{\delta(a)},
\]
because $\delta(a) < 2$, so in this case \eqref{eq:st-Q-max'} also holds.
\end{proof}

It remains to prove the lemmas.

\begin{proof}[Proof of Lemma \ref{lm:delta<}]
The idea is to argue that Ball's inequality $\vol_{n-1}(Q_n \cap a^\perp) \leq \sqrt{2}$ allows for a self-improvement near the extremiser. We shall assume that $n \geq 3$ and $a_1^2 + a_2^2 < 1$ (the case $n = 2$ can be analysed directly). A starting point is formula \eqref{eq:Q-spheres}, combined with Lemma \ref{lm:X+Y},
\[
\vol_{n-1}(Q_n \cap a^\perp) = \E_{X,Y}\min\left\{|X|^{-1},|Y|^{-1}\right\},
\]
where we apply it to $X = a_1\xi_1 + a_2\xi_2$ and $Y = \sum_{j=3}^n a_j\xi_j$. By Ball's inequality, 
\[
\E_Y|Y|^{-1} \leq \sqrt{2}(1-a_1^2-a_2^2)^{-1/2}.
\]
Thus, thanks to the independence of $X$ and $Y$ and the simple inequality 
\[
\E_Y\min\left\{|X|^{-1},|Y|^{-1}\right\} \leq \min\left\{|X|^{-1},\E_Y|Y|^{-1}\right\},
\]
we obtain
\[
\vol_{n-1}(Q_n \cap a^\perp) \leq \E_X \min\left\{|X|^{-1},\sqrt{2}(1-a_1^2-a_2^2)^{-1/2}\right\}.
\]
Note that $|X|$ has the same distribution as $(a_1^2+a_2^2+2a_1a_2U)^{1/2}$, where $U$ is a random variable uniform on $[-1,1]$. To evaluate $\E_X$, observe that $|X|^{-1} < \sqrt{2}(1-a_1^2-a_2^2)^{-1/2}$ corresponds to $U > u_0$, where
\[
u_0 = \frac{1-3(a_1^2+a_2^2)}{4a_1a_2}.
\]
We need to consider two cases. Let $\delta = \delta(a)/2$, that is 
\[
a_1 + a_2 = \sqrt{2}(1-\delta).
\]

\emph{Case 1: $u_0 \leq -1$.} Then 
\[
\E_X \min\left\{|X|^{-1},\sqrt{2}(1-a_1^2-a_2^2)^{-1/2}\right\} = \E|X|^{-1} = \min(a_1,a_2)^{-1}= a_1^{-1}.
\]
Given $a_1 + a_2 = \sqrt{2}(1-\delta)$,
the condition $u_0 \leq -1$ implies that $a_1 \geq \bar a_1$, where $\bar a_1$ is the larger of the two solutions to the quadratic equation
\[
1-3(a_1^2+(\sqrt{2}(1-\delta)-a_1)^2) = -4a_1(\sqrt{2}(1-\delta)-a_1).
\]
This yields
\[
\vol_{n-1}(Q_n \cap a^\perp)  \leq \frac{1}{\bar a_1} = \sqrt{2}\left(1-\delta + \sqrt{\frac{\delta}{5}}\sqrt{2-\delta}\right)^{-1} \leq \sqrt{2} - c_0\sqrt{\delta}
\]
for a universal constant $c_0 > 0$, provided that $\delta$ is sufficiently small.

\emph{Case 2: $u_0 > -1$.} It is clear that for all $\delta$ sufficiently small, $u_0 < 1$ (in fact since $a_1+a_2 \leq \sqrt{2}(a_1^2+a_2^2) \leq \sqrt{2}$, the equality  $a_1+a_2=\sqrt{2}(1-\delta)$ for small $\delta$ implies that both numbers $a_1, a_2$ are close to $\frac{1}{\sqrt{2}}$ and thus $u_0$ is close to $-1$). Then
\begin{align*}
\E_X &\min\left\{|X|^{-1},\sqrt{2}(1-a_1^2-a_2^2)^{-1/2}\right\} \\
&= \frac{1}{2}(u_0+1)\sqrt{2}(1-a_1^2-a_2^2)^{-1/2} + \frac{1}{2}\int_{u_0}^1 (a_1^2+a_2^2 + 2a_1a_2u)^{-1/2} \dd u \\
&= \frac{u_0+1}{\sqrt{2(1-a_1^2-a_2^2)}} + \frac{a_1+a_2-\sqrt{a_1^2+a_2^2+2a_1a_2u_0}}{2a_1a_2}.
\end{align*}
Plugging in $u_0$ and rewriting in terms of $s = a_1 + a_2$, $\rho = a_1^2+a_2^2$ results with an upper bound on $\vol_{n-1}(Q_n \cap a^\perp)$ by
\[
h(s,\rho) = \frac{s}{s^2-\rho} + \frac{2s^2-1-3\rho}{2\sqrt{2}(s^2-\rho)\sqrt{1-\rho}}.
\]
Note that $\frac{s^2}{2} \leq \rho < 1$. We claim that for every $1 \leq s \leq \sqrt{2}$, function $\rho \mapsto h(s,\rho)$ is decreasing on $(\frac{s^2}{2},1)$. Thus,
\begin{align*}
\vol_{n-1}(Q_n \cap a^\perp) \leq h(s,s^2/2) &= \frac{2}{s} - \frac{\sqrt{1-s^2/2}}{\sqrt{2}s^2} \\
&= \sqrt{2}(1-\delta)^{-2}\left(1-\delta-\frac{\sqrt{\delta}}{2 \sqrt{2}}\sqrt{2-\delta}\right) \\
&< \sqrt{2} - c_0\sqrt{\delta}
\end{align*}
for a universal constant $c_0 > 0$ and all sufficiently small $\delta$.

To prove that $\rho \mapsto h(s,\rho)$ is decreasing on $(\frac{s^2}{2},1)$, we fix $1 \leq s \leq \sqrt{2}$ and compute the derivative
\[
\frac{\partial h}{\partial \rho} = 
-\frac{2\sqrt{2}-3\sqrt{2}\rho(1+\rho)-8(1-\rho)^{3/2}s+3\sqrt{2}s^2(1+\rho)-2\sqrt{2}s^4}{8(1-\rho)^{3/2}(s^2-\rho)^2}.
\]
Note that the numerator 
\[
\tilde{h}(s,\rho)=2\sqrt{2}-3\sqrt{2}\rho(1+\rho)-8(1-\rho)^{3/2}s+3\sqrt{2}s^2(1+\rho)-2\sqrt{2}s^4
\]
is a concave function of $\rho\in(s^2/2,1)$, as a sum of concave functions. It suffices to show that the values at the endpoints are non-negative. At $\rho=1$, we have
\[
\tilde{h}(s,1)=-2\sqrt{2}(s^4-3s^2+2)=2\sqrt{2}(s^2-1)(2-s^2)\geq 0.
\]
At $\rho=s^2/2$, we get 
\[
\tilde{h}(s,s^2/2)=\frac{2-s^2}{2\sqrt{2}}\left(5s^2+4-8s\sqrt{2-s^2}\right) \geq \frac{2-s^2}{2\sqrt{2}}\left(5s^2-4\right) \geq 0,
\]
by $2s\sqrt{2-s^2} \leq s^2 + (2-s^2) = 2$.
\end{proof}

\begin{proof}[Proof of Lemma \ref{lm:delta>}]
Assume that $\delta(a) > \delta_0$. In particular,
\begin{equation}\label{eq:a2-small}
a_2 \leq \frac{1}{2}(a_1 + a_2) = \frac{2-\delta(a)}{2\sqrt{2}} < \frac{1}{\sqrt{2}} - \frac{\delta_0}{2\sqrt{2}}.
\end{equation}
The argument is now split into two cases: when $a_1 \leq \frac{1}{\sqrt{2}}$, we employ \eqref{eq:Q-Fourier} and use Ball's approach to show that savings simply come from $a_2$ being small, whilst when $a_1 > \frac{1}{\sqrt{2}}$, provided $a_1$ is \emph{close} to $\frac{1}{\sqrt{2}}$, we employ Busemann's theorem to reduce this case to the previous one.

\emph{Case 1: $a_1 \leq \frac{1}{\sqrt{2}}$.} For $s \geq 2$, we define
\[
\Psi(s) = \frac{2}{\pi}\sqrt{s}\int_0^\infty \left|\frac{\sin t}{t}\right|^s \dd t.
\]
To establish his cube-slicing result, Ball showed in \cite{B86} that
\[
\Psi(s) < \Psi(2) = \sqrt{2}, \qquad s > 2.
\]
Moreover, since $\frac{\sin(t\sqrt{s})}{t/\sqrt{s}} = 1 - \frac{t^2}{6s} + O(s^{-2})$ as $s \to \infty$,
\[
\lim_{s \to \infty} \Psi(s) = \sqrt{\frac{6}{\pi}} < \sqrt{2}.
\]
In particular, by continuity, for every $s_0 > 2$, there is $0 < \theta_0 < 1$ such that
\begin{equation}\label{eq:Psi}
\Psi(s) \leq \theta_0\sqrt{2}, \qquad s \geq s_0.
\end{equation}
As in \cite{B86}, applying H\"older's inequality in \eqref{eq:Q-Fourier} yields
\[
\vol_{n-1}(Q_n \cap a^\perp) \leq \prod_{j=1}^n \Psi(a_j^{-2})^{a_j^2}.
\]
Letting $s_0 = 2\left(1 - \delta_0/2\right)^{-2}$, from \eqref{eq:a2-small}, we know that $a_j^{-2} \geq s_0$ for each $j \geq 2$, thus \eqref{eq:Psi} applied to each $j \geq 2$ and $\Psi(a_1^{-2}) \leq \sqrt{2}$ give
\[
\vol_{n-1}(Q_n \cap a^\perp) \leq \theta_0^{1-a_1^2}\sqrt{2} \leq \theta_0^{1/2}\sqrt{2} = \sqrt{2} - c_1.
\]

\emph{Case 2: $\frac{1}{\sqrt{2}} < a_1$.} We argue that there are positive universal constants $\gamma_0, c_2$ such that if additionally $a_1 < \frac{1}{\sqrt{2}} + \gamma_0$, then $\vol_{n-1}(Q_n \cap a^\perp) \leq \sqrt{2} - c_2$. To this end, we modify $a$ and consider the unit vector 
\[
b = \left(\frac{1}{\sqrt{2}}, \sqrt{a_1^2+a_2^2 - \frac{1}{2}}, a_3, \dots, a_n\right).
\]
Note that $b_1 \geq b_2$ and since $b_2 \geq a_2$, also $b_2 \geq b_3 \geq \dots \geq b_n$. Moreover, crudely,
\[
\sqrt{a_1^2+a_2^2 - \frac{1}{2}} - a_2 = \frac{a_1^2-\frac{1}{2}}{\sqrt{a_1^2+a_2^2 - \frac{1}{2}} + a_2} \leq \sqrt{a_1^2-\frac{1}{2}} \leq \sqrt{2\gamma_0},
\]
thus
\[
|a-b|^2 = \left(a_1 - \frac{1}{\sqrt{2}}\right)^2 + \left(\sqrt{a_1^2+a_2^2 - \frac{1}{2}} - a_2\right)^2 < \gamma_0^2 + 2\gamma_0.
\]
Lemma \ref{lm:Lip} yields
\[
\vol_{n-1}(Q_n \cap a^\perp) \leq \vol_{n-1}(Q_n \cap b^\perp) + 2\sqrt{\gamma_0^2+2\gamma_0}.
\]
If $\delta(b) > \delta_0$, then Case 1 applied to $b$ gives
\[
\vol_{n-1}(Q_n \cap b^\perp) < \sqrt{2} - c_1.
\]
Otherwise, observing that
\begin{align*}
\delta(b) &= \delta(a) - \sqrt{2}\left(\frac{1}{\sqrt{2}}+\sqrt{a_1^2+a_2^2 - \frac{1}{2}}-a_1-a_2\right)\\
&> \delta_0 -\sqrt{2}\left(\sqrt{a_1^2+a_2^2 - \frac{1}{2}}-a_2\right) \\
&> \delta_0 - 2\sqrt{\gamma_0},
\end{align*}
Lemma \ref{lm:delta<} applied to $b$ gives
\[
\vol_{n-1}(Q_n \cap b^\perp) < \sqrt{2} - c_0\sqrt{\delta_0 - 2\sqrt{\gamma_0}}.
\]
In any case, choosing $\gamma_0$ sufficiently small (depending on the values of $c_0, c_1, \delta_0$), we can ensure that
\[
\vol_{n-1}(Q_n \cap a^\perp) \leq \sqrt{2} - c_2
\]
with a positive universal constant $c_2$.
\end{proof}

\begin{remark}\label{rem:st-Q-max-opt}
The dependence on $\delta(a)$ in \eqref{eq:st-Q-max'} (modulo the universal constant $c_0$) is best possible: if we consider $a_\e = \left(\sqrt{\frac12+\e},\sqrt{\frac12-\e},0,\dots,0\right)$ with $\e \to 0$, then $\delta(a) = \e^2 + O(\e^4)$ and $\vol_{n-1}(Q_n \cap a^\perp) = a_1^{-1} = \sqrt{2}-\sqrt{2\delta(a)}+o(\sqrt{\delta(a)})$.
\end{remark}

\section{Hyperplane sections of  \texorpdfstring{$B_p^n$, $0 < p < \infty$}{Bpn, 0<p<oo}}\label{sec:0pinf}

\subsection{Case \texorpdfstring{$0 < p < 2$}{0<p<2}}

As remarked in \cite{ENT18}, formula \eqref{eq:Bp-sec-p<2} immediately yields the Schur-convexity of the function 
\[
(b_1,\dots,b_n) \mapsto \vol_{n-1}(B_p^n \cap (\sqrt{b_1},\dots,\sqrt{b_n})^\perp)
\]
on $\R_+^n$, in particular asserting that the subspaces of minimal and maximal volume cross-section are $(\frac{1}{\sqrt{n}},\dots,\frac{1}{\sqrt{n}})^\perp$ and $(1,0,\dots,0)$. Moreover, the formula allows to obtain stability results for these extremisers, which has not been observed before.

\subsubsection{Case \texorpdfstring{$0 < p < 2$}{0<p<2}: maximal sections}\label{sec:p<2max}

Thanks to Schur-convexity the case of maximal sections is straightforward.


\begin{proof}[Proof of \eqref{eq:st-Bp-p<2-max}]
By \eqref{eq:Bp-sec-p<2} and Schur-convexity,
\begin{align*}
\frac{\vol_{n-1}(B_p^n \cap a^\perp)}{\vol_{n-1}(B_p^{n-1})} &= \E\left(\sum_{j=1}^n a_j^2\bar V_j\right)^{-1/2} \leq \E\left( a_1^2\bar V_1 + (1-a_1^2)\bar V_2\right)^{-1/2} \\
&= \frac{\vol_{1}(B_p^2 \cap (a_1,\sqrt{1-a_1^2})^\perp)}{\vol_{1}(B_p^{1})},
\end{align*}
which is exactly the right hand side of \eqref{eq:st-Bp-p<2-max}. 
\end{proof}

\begin{remark}\label{rem:opt-p<2-max}
The bound is clearly optimal as it is attained in the case of vectors with at most two nonzero coordinates. Moreover, the right hand side of \eqref{eq:st-Bp-p<2-max} in terms of $\delta = \delta(a) = |a - e_1|$ is asymptotic to $1 - \frac{1}{p}\delta^p$ as $\delta \to 0^+$.
\end{remark}

\subsubsection{Case \texorpdfstring{$0 < p < 2$}{0<p<2}: minimal sections}\label{sec:p<2min}

Here our goal is to establish \eqref{eq:st-Bp-p<2-min}.
We begin with a relevant stability result for negative moments. We rely on the fact that $x \mapsto x^{-q}$, $q>0$ is completely monotone, which allows to use simple convexity properties of log-moment generating functions.

\begin{lemma}\label{lm:st-exp}
Let $Y$ be a nonnegative random variable and $\Lambda(u) = \log\E e^{-u Y}$, $u \geq 0$. For every nonnegative real numbers $b_1, \dots, b_n$ with $B = \sum_{j=1}^n b_j$, we have
\begin{equation}\label{eq:st-exp-min}
\sum_{j=1}^n \Lambda(b_j) \geq n\Lambda(B/n) + c\sum_{j=1}^n (b_j - B/n)^2,
\end{equation}
where
\[
c = \frac{1}{4}\sup_{0 < \alpha < \beta < \gamma} e^{-L(\alpha+\gamma)}(\beta-\alpha)^2\p{Y < \alpha}\p{\beta < Y < \gamma}
\]
with $L = \max_{j \leq n} b_j$.
\end{lemma}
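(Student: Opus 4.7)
The plan is to read \eqref{eq:st-exp-min} as a strong-convexity estimate for $\Lambda$ on the interval $[0,L]$. Differentiating twice under the integral gives
\[
\Lambda'(u) = -\frac{\E Y e^{-uY}}{\E e^{-uY}}, \qquad \Lambda''(u) = \mathrm{Var}_{\mu_u}(Y),
\]
where $\mu_u$ is the exponentially tilted probability measure $\mathrm{d}\mu_u = \frac{e^{-uY}}{\E e^{-uY}}\,\mathrm{d}\pp$; in particular $\Lambda$ is convex. Once a uniform lower bound $\Lambda''(u) \geq 2c$ on $[0,L]$ is in hand, the conclusion follows immediately: since $B/n$ and every $b_j$ lie in $[0,L]$, Taylor's theorem together with the cancellation $\sum_j (b_j - B/n)=0$ yields
\[
\sum_{j=1}^n \Lambda(b_j) - n\Lambda(B/n) = \sum_{j=1}^n\Big[\Lambda(b_j) - \Lambda(B/n) - \Lambda'(B/n)(b_j - B/n)\Big] \geq c \sum_{j=1}^n (b_j - B/n)^2.
\]

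The main work is thus a lower bound on $\mathrm{Var}_{\mu_u}(Y)$ for $u\in[0,L]$. Fix levels $0<\alpha<\beta<\gamma$ and set $A=\{Y<\alpha\}$, $C=\{\beta<Y<\gamma\}$. From the identity $2\mathrm{Var}(Y) = \E(Y-Y')^2$ with $Y'$ an independent copy of $Y$, restricted to the event $\{Y\in A,\, Y'\in C\}\cup\{Y\in C,\, Y'\in A\}$ on which $(Y-Y')^2 \geq (\beta-\alpha)^2$, one reads off
\[
\mathrm{Var}_{\mu_u}(Y) \geq (\beta-\alpha)^2\, \mu_u(A)\,\mu_u(C).
\]
Because $Y\geq 0$ and $u\geq 0$, the pointwise bounds $e^{-uY}\geq e^{-u\alpha}$ on $A$ and $e^{-uY}\geq e^{-u\gamma}$ on $C$, together with $\E e^{-uY}\leq 1$, give $\mu_u(A)\geq e^{-u\alpha}\p{Y<\alpha}$ and $\mu_u(C)\geq e^{-u\gamma}\p{\beta<Y<\gamma}$. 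Using $u\leq L$ in the factor $e^{-u(\alpha+\gamma)}$ results in
\[
\Lambda''(u) \geq (\beta-\alpha)^2\, e^{-L(\alpha+\gamma)}\,\p{Y<\alpha}\,\p{\beta<Y<\gamma},\qquad u\in[0,L],
\]
uniformly in $u$. Taking the supremum over $0<\alpha<\beta<\gamma$ and feeding this bound into the strong-convexity step above completes the proof; the extra factor of $\tfrac12$ hidden in the stated constant $c=\tfrac14 \sup(\cdots)$ (rather than $\tfrac12\sup(\cdots)$ as produced here) provides comfortable slack, which one could equally well obtain by using a weaker variance lower bound such as $(\beta-\alpha)^2 \mu_u(A)\mu_u(C)/2$.

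I do not anticipate a real obstacle: the calculation is short and the probabilistic ingredients are elementary. The only bookkeeping point is to confirm that the interval $[0,L]$ actually contains both the mean $B/n$ and each $b_j$, which is automatic from the definition $L=\max_j b_j$, and that the tilted variance identity $\Lambda''(u)=\mathrm{Var}_{\mu_u}(Y)$ holds without integrability issues since $Y\geq 0$ makes $e^{-uY}\leq 1$ for all $u\geq 0$.
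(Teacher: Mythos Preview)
Your proof is correct and follows essentially the same route as the paper: expand $\Lambda(b_j)$ about $B/n$ via Taylor with Lagrange remainder, reduce to a uniform lower bound on $\Lambda''$ over $(0,L]$, and obtain that bound by writing $\Lambda''(u)$ as (half of) $\E(Y_1-Y_2)^2 e^{-u(Y_1+Y_2)}/(\E e^{-uY})^2$ and restricting to the event $\{Y_1<\alpha,\ \beta<Y_2<\gamma\}$. Your tilted-variance phrasing and use of both orderings in the symmetrization are cosmetic differences that yield the constant $\tfrac12\sup(\cdots)$ instead of the paper's $\tfrac14\sup(\cdots)$, which, as you note, only strengthens the stated inequality.
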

\begin{proof}
By Taylor's theorem with Lagrange's reminder,
\[
\Lambda(b_j) = \Lambda(B/n) + (b_j-B/n)\Lambda'(B/n) + \frac{1}{2}(b_j-B/n)^2\Lambda''(\theta_j),
\]
for some $\theta_j$ between $b_j$ and $B/n$. Adding these inequalities over $j \leq n$ gives \eqref{eq:st-exp-min} with $c = \frac{1}{2}\inf_{(0,\max_j b_j)}\Lambda''$. Let $Y_1, Y_2$ be independent copies of $Y$. Crudely, $\E e^{-uY_1} \leq 1$, so for $0 < \alpha < \beta < \gamma$,
\begin{align*}
\Lambda''(u) &= \frac{1}{2}\frac{1}{(\E e^{-uY_1})^2}\E(Y_2-Y_1)^2e^{-uY_1}e^{-uY_2} \\
&\geq \frac{1}{2}\E(Y_2-Y_1)^2e^{-uY_1}e^{-uY_2}\1_{\{Y_1 < \alpha\}}\1_{\{\beta < Y_2 < \gamma\}} \\
&\geq \frac{1}{2}(\beta - \alpha)^2e^{-u(\alpha+\gamma)}\p{Y_1 < \alpha}\p{\beta < Y_2 < \gamma},
\end{align*}
which proves \eqref{eq:st-exp-min}. 
\end{proof}

\begin{theorem}\label{thm:st-Bp-mom-p<2}
Let $q > 0$. Let $Y$ be a nonnegative random variable which is not constant a.s. with $\E Y < \infty$. Let $Y_1, Y_2, \dots$ be its i.i.d. copies. For every $b_1, \dots, b_n \geq 0$ with $\sum_{j=1}^n b_j  = 1$, we have
\begin{equation}\label{eq:st-mom-min}
\E\left(\sum_{j=1}^n b_jY_j\right)^{-q} \geq \E\left(\sum_{j=1}^n \frac{1}{n}Y_j\right)^{-q} + c_{q,Y}\sum_{j=1}^n(b_j-1/n)^2,
\end{equation}
for some positive constant $c_{q,Y}$ which depends only on $q$ and the distribution of $Y$.
\end{theorem}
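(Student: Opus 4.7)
The plan is to exploit the complete monotonicity of $x\mapsto x^{-q}$ via the Laplace representation
\[
x^{-q} = \frac{1}{\Gamma(q)}\int_0^\infty u^{q-1} e^{-ux}\,du, \qquad x > 0,
\]
which, after Tonelli on the nonnegative integrand, converts the joint negative moment into
\[
\E\Big(\sum_{j=1}^n b_j Y_j\Big)^{-q} = \frac{1}{\Gamma(q)}\int_0^\infty u^{q-1}\exp\Big(\sum_{j=1}^n \Lambda(ub_j)\Big)\,du,
\]
where $\Lambda(v) = \log\E e^{-vY}$. The point is that under the integral, the exponent is now exactly the object controlled by Lemma \ref{lm:st-exp}, and the inequality can be applied pointwise in $u$.

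Next I would fix, once and for all depending only on the distribution of $Y$, a triple $0<\alpha<\beta<\gamma$ with $\p{Y<\alpha}>0$ and $\p{\beta<Y<\gamma}>0$; such a triple exists because $Y$ is nonnegative and nonconstant. For each $u>0$, Lemma \ref{lm:st-exp} applied to the weights $ub_1,\dots,ub_n$ (which sum to $u$ and whose maximum is at most $u$ since $b_j\in[0,1]$) will yield
\[
\sum_{j=1}^n \Lambda(ub_j) \ \ge\ n\Lambda(u/n) + C_0\, e^{-u(\alpha+\gamma)}\, u^2\, S,
\]
where $C_0 = \tfrac14(\beta-\alpha)^2\p{Y<\alpha}\p{\beta<Y<\gamma}>0$ and $S := \sum_{j=1}^n (b_j-1/n)^2$. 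The crucial uniform bound $\max_j ub_j \le u$ (which follows from $\sum b_j=1$) is what allows the constant to be controlled independently of $n$ at this step.

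I would then exponentiate, use $e^{a+b}\ge e^a(1+b)$ for $b\ge 0$, and integrate against $\Gamma(q)^{-1}u^{q-1}\,du$. The leading term reproduces
\[
\frac{1}{\Gamma(q)}\int_0^\infty u^{q-1} e^{n\Lambda(u/n)}\,du = \E\Big(\tfrac1n\sum_{j=1}^n Y_j\Big)^{-q}
\]
by applying the Laplace representation in reverse, while the remainder equals
\[
S\cdot \frac{C_0}{\Gamma(q)}\int_0^\infty u^{q+1} e^{-u(\alpha+\gamma)}\, e^{n\Lambda(u/n)}\,du.
\]
To extract an $n$-uniform constant, I would invoke Jensen's inequality in the form $e^{n\Lambda(u/n)} = [\E e^{-(u/n)Y}]^n \ge e^{-u\E Y}$ (using $\E Y<\infty$), which turns the integral into $\Gamma(q+2)/(\alpha+\gamma+\E Y)^{q+2}$, giving
\[
c_{q,Y} = \frac{C_0\,\Gamma(q+2)}{\Gamma(q)(\alpha+\gamma+\E Y)^{q+2}} > 0.
\]

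The main obstacle is verifying that the constant one obtains is genuinely independent of $n$: the naive $\Lambda''$ bounds in Lemma \ref{lm:st-exp} degrade as the weights grow, and the outer integral over $u$ threatens to pick up bad $n$-dependence through $\Lambda(u/n)$. Three ingredients conspire to prevent this: the bound $\max_j ub_j \le u$ from $\sum b_j=1$, the distribution-only choice of $\alpha,\beta,\gamma$, and Jensen's inequality handling $e^{n\Lambda(u/n)}$ uniformly in $n$. Everything else in the argument is routine Laplace-transform manipulation and Fubini.
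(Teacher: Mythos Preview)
Your proof is correct and follows essentially the same route as the paper: Laplace representation of $x^{-q}$, pointwise application of Lemma~\ref{lm:st-exp} to the weights $ub_j$, the bound $e^{a+b}\ge e^a(1+b)$, and then the Jensen/convexity estimate $e^{n\Lambda(u/n)}\ge e^{-u\E Y}$ to make the remaining integral $n$-free. The only cosmetic difference is that you fix $\alpha,\beta,\gamma$ explicitly (and invoke Jensen directly), whereas the paper packages this as ``$c\ge c_1e^{-c_2 t}$'' and phrases the last step via monotonicity of $n\Lambda(t/n)$; the content is identical.
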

\begin{proof}
Using $x^{-q} = \Gamma(q)^{-1}\int_0^\infty e^{-tx}t^{q-1}\dd t$, $x > 0$, we have
\begin{equation}\label{eq:ident}
\E\left(\sum_{j=1}^n b_jY_j\right)^{-q} = \Gamma(q)^{-1}\int_0^\infty \exp\left(\sum_{j=1}^n \Lambda(tb_j)\right)t^{q-1}\dd t,
\end{equation}
where $\Lambda(u) = \log\E e^{-uY}$. We apply Lemma \ref{lm:st-exp} to the numbers $tb_j$ which add up to $t$. It is clear that under our assumptions on $Y$, the constant $c$ from Lemma \ref{lm:st-exp} satisfies $c \geq c_1e^{-c_2t}$, for some positive constants $c_1, c_2 > 0$ which depend only on the distribution of $Y$. Thus, from \eqref{eq:st-exp-min}, we get
\[
\E\left(\sum_{j=1}^n b_jY_j\right)^{-q} \geq \Gamma(q)^{-1}\int_0^\infty \exp\left(n\Lambda(t/n)+c_1e^{-c_2t}t^2\delta\right)t^{q-1}\dd t
\]
with 
$
\delta = \sum_{j=1}^n (b_j - 1/n)^2.
$
Using $\exp\left(c_1e^{-c_2t}t^2\delta\right) \geq c_1e^{-c_2t}t^2\delta + 1$, we obtain
\[
\E\left(\sum_{j=1}^n b_jY_j\right)^{-q} \geq  \E\left(\sum_{j=1}^n \frac{1}{n}Y_j\right)^{-q}  + \delta\cdot c_1\Gamma(q)^{-1}\int_0^\infty \exp\left(n\Lambda(t/n)\right)e^{-c_2t}t^{q+1}\dd t.
\]
By the convexity of $\Lambda$, the sequence $(n\Lambda(t/n))_n$ is nonincreasing with the limit $-t\E Y$, hence
\[
\int_0^\infty \exp\left(n\Lambda(t/n)\right)e^{-c_2t}t^{q+1}\dd t \geq \int_0^\infty e^{-(c_2+\E Y)t}t^{q+1} \dd t,
\]
which gives \eqref{eq:st-mom-min}. 

\end{proof}

We are ready to establish the desired stability results for minimal sections.

\begin{proof}[Proof of \eqref{eq:st-Bp-p<2-min}]
Let
\[
A_{n,p} = \E\left(\sum_{j=1}^n \frac{1}{n}\bar V_j\right)^{-1/2}.
\]
From \eqref{eq:Bp-sec-p<2} and \eqref{eq:st-mom-min} applied to the $\bar V_j$ and $q = \frac{1}{2}$, we have
\begin{align}
\label{eq:Anp-ratio}\frac{\vol_{n-1}(B_p^n \cap a^\perp)}{\vol_{n-1}(B_p^n \cap (\frac{1}{\sqrt{n}},\dots,\frac{1}{\sqrt{n}})^\perp)} &= \frac{1}{A_{n,p}}\E\left(\sum_{j=1}^n a_j\bar V_j\right)^{-1/2} \\\notag
&\geq 1 + \frac{c_p}{A_{n,p}}\sum_{j=1}^n (a_j^2-1/n)^2
\end{align}
with a positive constant $c_p$ which depends only on $p$ (through the distribution of $\bar V_1$). It remains to note that thanks to Schur-convexity, the sequence $A_{n,p}$ is nonincreasing, thus $A_{n,p} \leq A_{1,p} = \E \bar V_1^{-1/2} = 1$.
\end{proof}

\begin{remark}\label{rem:Anp}
The sequence $A_{n,p}$ is in fact bounded below as well, namely by
\[
\lim_{n\to\infty} A_{n,p} \geq \E\left[\lim_{n\to\infty} \left(\sum_{j=1}^n \frac{1}{n}\bar V_j\right)^{-1/2}\right] = \left(\E \bar V_1\right)^{-1/2}.
\]
Moreover, as $n\to\infty$, we have
\begin{equation}\label{eq:Anp-asymp}
A_{n,p} = c_0(p) + \frac{c_1(p)}{n} + O(n^{-3/2})
\end{equation}
for some constants $c_0(p), c_1(p)$ which depend only on $p$. This is justified by first noting that $A_{n,p} = g_n(0)$  where $g_n(x)$ is the density of $\frac{1}{\sqrt{n}}\sum_{j=1}^n Y_j$ (plug in $a=e_1$ in \eqref{eq:Anp-ratio} and recall Corollary \ref{cor:secBpn}) and then evoking the Edgeworth expansion for $g_n$ (see, e.g. Theorem 3.2 in \cite{Bob} and classical references therein).

\end{remark}

\begin{remark}\label{rem:opt-Bp-min-p<2}
The dependence on $\delta_n(a) = \sum_{j=1}^n (a_j^2-1/n)^2$ in \eqref{eq:st-Bp-p<2-min} modulo a constant factor is best possible, in the following two scenarios.

1) As $n\to \infty$, there are unit vectors $a$ in $\R^n$ with $\delta_n = \delta_n(a) \to 0$ such that the left hand side of \eqref{eq:st-Bp-p<2-min} is in fact of the order $1 + c(p)\cdot \delta_n + o(\delta_n)$. Consider $a = (\frac{1}{\sqrt{n-1}},\dots, \frac{1}{\sqrt{n-1}}, 0)$ in $\R^n$. Then $\delta_n = \delta_n(a) = (n-1)\left(\frac{1}{n-1}-\frac{1}{n}\right)^2 + \frac{1}{n^2} = \frac{1}{n^2}+O\left(\frac{1}{n^3}\right)$ and, using \eqref{eq:Anp-asymp},
\begin{align*}
\frac{\vol_{n-1}(B_p^n \cap (\frac{1}{\sqrt{n-1}},\dots, \frac{1}{\sqrt{n-1}}, 0)^\perp)}{\vol_{n-1}(B_p^n \cap (\frac{1}{\sqrt{n}},\dots,\frac{1}{\sqrt{n}})^\perp)} &= \frac{A_{n-1,p}}{A_{n,p}} = 1  + \frac{c(p)}{n^2} + O\left(\frac{1}{n^{5/2}}\right).
\end{align*}

2) For a fixed $n$, there are unit vectors $a$ in $\R^n$ with $\delta = \delta_n(a) \to 0$ such that the left hand side of \eqref{eq:st-Bp-p<2-min} is of the order $1 + c(p,n) \delta + o(\delta)$. For simplicity, let $n$ be a fixed even integer. Let $\e \to 0^+$ and consider 
\[a_\e = (\underbrace{\sqrt{\frac{1}{n}+\e},\dots, \sqrt{\frac{1}{n}+\e}}_{n/2},\underbrace{\sqrt{\frac{1}{n}-\e},\dots, \sqrt{\frac{1}{n}-\e}}_{n/2}).
\]
Then $\delta_\e = \delta_n(a_\e) = n\e^2$ and with 
\[
X = \bar V_1 + \dots + \bar V_{n/2}, \quad Y = \bar V_{n/2+1} + \dots + \bar V_{n},
\] 
which are i.i.d., we have
\begin{align*}
\frac{\vol_{n-1}(B_p^n \cap a_\e^\perp)}{\vol_{n-1}(B_p^n \cap (\frac{1}{\sqrt{n}},\dots,\frac{1}{\sqrt{n}})^\perp)} &= \frac{1}{A_{n,p}}\E\left(\frac{X+Y}{n} + \e(X-Y)\right)^{-1/2} \\
&= \frac{1}{A_{n,p}}\E\left[\left(\frac{X+Y}{n}\right)^{-1/2}\left(1 + \e n\frac{X-Y}{X+Y}\right)^{-1/2}\right].
\end{align*}
Since $\left|\e n\frac{X-Y}{X+Y}\right| \leq \e n < \frac{1}{2}$, for sufficiently small $\e$, using $(1 + x)^{-1/2} \leq 1 - \frac{1}{2}x + x^2$, $x > -\frac12$, we can thus upper bound the right hand side by
\begin{align*}
\frac{1}{A_{n,p}}\E\left[\left(\frac{X+Y}{n}\right)^{-1/2}\left(1 -\frac{1}{2}\e n\frac{X-Y}{X+Y} + \e^2n^2\left(\frac{X-Y}{X+Y}\right)^2\right)\right] = 1 + c(p,n)\e^2,
\end{align*}
where we use that $\left|\frac{X-Y}{X+Y}\right| \leq 1$ to guarantee the existence of the expectations involved and symmetry to conclude that term linear in $\e$ vanishes.
\end{remark}

\subsection{Case \texorpdfstring{$2 < p < \infty$}{p>2}}\label{sec:p>2}
Here we prove \eqref{eq:st-Bp-p>2}. We use the formula from Corollary \ref{cor:secBpn}, that for a unit vector $a \in \R^n$, we have
\[
\frac{\vol_{n-1}(B_p^n \cap a^\perp)}{\vol_{n-1}(B_p^{n-1})} = f_{a}(0),
\]
where $f_a$ is the density of $\sum_{j=1}^n a_jY_j$, $Y_1, Y_2, \dots$ are i.i.d. random variables, each with density $\exp(-\beta_p^p|x|^p)$, where $\beta_p = 2\Gamma(1+1/p)$.

\begin{lemma}\label{lm:int-ineq-p>2}
Let $2 < p < \infty$. For every $u_0 > 0$, there is $c > 0$ depending only on $u_0$ and $p$ such that for every $0 < u < u_0$, we have
\begin{equation}\label{eq:int-ineq-p>2}
(1+u)^{1/2}\int_{\R} \exp\left\{-\beta_p^pu^{p/2}\left|x\right|^p-\pi x^2\right\} \dd x \geq 1 + cu.
\end{equation}
\end{lemma}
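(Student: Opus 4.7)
The plan is to denote the left-hand side by $F(u) := (1+u)^{1/2} I(u)$ where $I(u) := \int_\R e^{-\beta_p^p u^{p/2}|x|^p - \pi x^2}\dd x$, and to establish \eqref{eq:int-ineq-p>2} in three steps: prove $F(u) > 1$ strictly for $u > 0$, compute $F(0) = 1$ and $F'(0) = 1/2$, and conclude by a compactness argument that $(F(u)-1)/u$ is bounded below on $(0, u_0]$.

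For the first step, the key observation is that the target reciprocal $(1+u)^{-1/2}$ is precisely a Gaussian Laplace transform. Since $\beta_2 = \sqrt{\pi}$, the random variable $Y^{(2)}$ has density $e^{-\pi x^2}$, and a direct Gaussian computation gives
\[
\E\, e^{-\pi (Y^{(2)})^2/u} = \int_\R e^{-\pi x^2/u}\, e^{-\pi x^2}\,\dd x = \sqrt{\tfrac{u}{1+u}}.
\]
Kanter's peakedness comparison (already invoked in the proof of Theorem \ref{thm:Bpn-minimal-p>2}) asserts that $Y^{(p)}$ is more peaked than $Y^{(2)}$ for $p > 2$, so applying it to the bounded, even, strictly-decreasing-in-$|x|$ function $\psi(x) = e^{-\pi x^2/u}$ gives
\[
\int_\R e^{-\pi x^2/u - \beta_p^p|x|^p}\,\dd x \;=\; \E\,\psi(Y^{(p)}) \;\geq\; \E\,\psi(Y^{(2)}) \;=\; \sqrt{\tfrac{u}{1+u}},
\]
strictly for $u > 0$ because the two distributions differ and $\psi$ is strictly decreasing on $(0,\infty)$. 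The substitution $x = y/\sqrt{u}$ in the definition of $I(u)$ shows that $u^{1/2} I(u)$ is exactly the left-hand integral above, hence $F(u) = (1+u)^{1/2} I(u) \geq 1$, with strict inequality for $u > 0$.

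For the second step, $F(0) = 1$ is immediate, and differentiation under the integral sign yields
\[
F'(u) = \frac{I(u)}{2\sqrt{1+u}} + \sqrt{1+u}\, I'(u), \qquad I'(u) = -\tfrac{p}{2}\beta_p^p\, u^{p/2-1}\int_\R |x|^p e^{-\beta_p^p u^{p/2}|x|^p - \pi x^2}\,\dd x.
\]
Since $p/2 - 1 > 0$, one has $I'(u) \to 0$ as $u \to 0^+$, so $F'(0) = 1/2$. Consequently, the function $G(u) := (F(u)-1)/u$, extended continuously by $G(0) := 1/2$, is continuous on $[0, u_0]$ and, by the previous step, strictly positive there. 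Compactness then produces $c := \min_{[0, u_0]} G > 0$, and the conclusion $F(u) - 1 \geq c u$ on $(0, u_0)$ follows.

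The main obstacle is the first step, specifically the recognition that the Gaussian factor $e^{-\pi x^2}$ in the integrand pairs with the target bound $(1+u)^{-1/2}$ via the identity $\E\,e^{-\pi Z^2/u} = \sqrt{u/(1+u)}$; once that match is spotted, the Meyer--Pajor/Kanter peakedness used earlier in the paper delivers the inequality essentially for free, and everything else reduces to a routine Taylor expansion plus compactness.
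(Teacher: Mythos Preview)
Your proof is correct and follows essentially the same approach as the paper: both arguments hinge on the peakedness comparison (after the substitution $x=y/\sqrt{u}$) to get the strict inequality $F(u)>1$, and both handle small $u$ by exploiting that $I'(u)=O(u^{p/2-1})\to 0$ (the paper phrases this via the explicit bound $I(u)\geq 1-A_p u^{p/2}$ from $e^{-t}\geq 1-t$, you via $F'(0)=1/2$), then finish with continuity/compactness on the remaining compact interval. The organization differs only cosmetically.
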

\begin{proof}
Fix $2 < p < \infty$ and $u_0 > 0$. Using $\exp(-t) \geq 1- t$, we obtain
\[
\int_{\R} \exp\left\{-\beta_p^pu^{p/2}\left|x\right|^p-\pi x^2\right\} \dd x \geq 1 - A_pu^{p/2}
\]
with $A_p = \beta_p^p\int_{\R} |x|^pe^{-\pi x^2}\dd x$. Thus it is clearly possible to choose sufficiently small $u_1 > 0$ and $c > 0$ which depend only on $p$ such that \eqref{eq:int-ineq-p>2} holds for all $0 < u < u_1$. Moreover, a change of variables $x = u^{-1/2}y$ yields
\[
\int_{\R} \exp\left\{-\beta_p^pu^{p/2}\left|x\right|^p-\pi x^2\right\} \dd x = u^{-1/2}\E \exp\left\{-\pi u^{-1}Y^2\right\},
\]
where $Y$ is a random variable with density $\exp(-\beta_p^p|x|^p)$ which is \emph{more} peaked than a Gaussian random variable $G$ with density $\exp(-\pi x^2)$. Thus, for every $u > 0$,
\[
\int_{\R} \exp\left\{-\beta_p^pu^{p/2}\left|x\right|^p-\pi x^2\right\} \dd x > u^{-1/2}\E \exp\left\{-\pi u^{-1}G^2\right\} = (1+u)^{-1/2}.
\]
Thus, by continuity, the infimum of left hand side of \eqref{eq:int-ineq-p>2} over $u_1 < u < u_0$ is strictly larger than $1$. Decreasing $c$ if necessary allows to finish the argument.
\end{proof}

\begin{proof}[Proof of \eqref{eq:st-Bp-p>2}]
We use different arguments, depending on whether the vector $a$ is close or not to the minimising one $e_1$. With hindsight, fix $\theta_p$ to be a positive sufficiently small constant which depends only on $p$ such that
\begin{equation}\label{eq:theta}
\begin{split}
(2\pi \E Y_1^2)^{-1/2}\exp(-0.28&\theta_p(\E|Y_1|^3)(\E Y_1^2)^{-5/2}) \\
& - (0.56\theta_p(\E|Y_1|^3)(\E Y_1^2)^{-3/2})^{1/2} > 1.
\end{split}
\end{equation}
Such a choice is possible since $2\pi \E Y_1^2 < 1$ for $p > 2$, as explained later in the proof.

\emph{Case 1: $a_1 > \theta_p$.} Here the starting point is a formula obtained from writing $f_a(0)$ as the convolution of the densities $\frac{1}{a_j}\exp(-\beta_p^p|x_j/a_j|^p)$ and changing the variables $y_j = x_j/a_j$, leading to
\[
f_a(0) = \frac{1}{a_1}\E\exp\left\{-\beta_p^p\left|\sum_{j=2}^n b_jY_j\right|^p\right\},
\]
with $b_j = \frac{a_j}{a_1}$. 
Let
\[
u = \sum_{j=2}^n b_j^2 = \frac{1-a_1^2}{a_1^2}.
\]
Note that our assumption  $a_1 \geq \theta_p$ is equivalent to $u \leq \theta_p^{-2}-1$. 
Since $Y_j$ is \emph{more} peaked than a Gaussian with density $\exp(-\pi x^2)$, we get
\[
\E\exp\left\{-\beta_p^p\left|\sum_{j=2}^nb_jY_j\right|^p\right\} \geq \int_{\R} \exp\left\{-\beta_p^p\left(\sum_{j=2}^n b_j^2\right)^{p/2}\left|x\right|^p-\pi x^2\right\} \dd x.
\]
Note that $\frac{1}{a_1} = \sqrt{1+u}$. Lemma \ref{lm:int-ineq-p>2} applied with $u_0 = \theta_p^{-2}-1$ thus yields
\[
f_a(0) \geq 1+c_pu = 1 + c_p\frac{1-a_1^2}{a_1^2} \geq 1 + c_p(1-a_1).
\]
with a positive constant $c_p$ which depends only on $p$.

\emph{Case 2: $a_1 \leq \theta_p$.} Since in this case
\[
\rho = \sum_{j=1}^n \E|a_jY_j|^3 \leq a_1\E|Y_1|^3\sum_{j=1}^n a_j^2 \leq \theta_p\E|Y_1|^3,
\]
we can use the Berry-Esseen theorem to argue that $f_a(0)$ is \emph{large}. Let 
\[
\sigma_p = (\E Y_1^2)^{1/2}.
\]
We have (see, e.g. \cite{T12} which provides the current best value of the numerical constant in the Berry-Esseen theorem),
\[
\sup_{x \in \R} \left|\p{\sum_{j=1}^n a_jY_j \leq x} - \p{Z_p \leq x}\right| \leq 0.56\sigma_p^{-3}\rho,
\]
where $Z_p$ is a Gaussian random variable with variance $\sigma_p$. Let $\phi_p$ denote the density of $Z_p$. Crucially, peakedness yields
\[
\phi_p(0) = \frac{1}{\sqrt{2\pi}\sigma_p} > \frac{1}{\sqrt{2\pi}\sigma_2} =  1,
\]
since $p > 2$. Thanks to the symmetry and monotonicity of the densities involved, in particular we obtain that for every $\delta > 0$,
\[
\delta f_a(0) \geq \int_0^\delta f_a(x) \dd x \geq \int_0^\delta \phi_p(x) \dd x - \e_p
\]
with $\e_p = 0.56\theta_p\sigma_p^{-3}\E|Y_1|^3$. Letting, say $\delta = \e_p^{1/2}$ and using $\delta^{-1}\int_0^\delta \phi_p(x)\dd x > \phi_p(\delta)=\phi_p(0)e^{-\delta^2/(2\sigma_p^2)}$, we see that $\theta_p$ chosen sufficiently small according to \eqref{eq:theta} guarantees that
\[
f_a(0) \geq \e_p^{-1/2}\int_0^{\e_p^{1/2}} \phi_p(x) \dd x - \e_p^{1/2} \geq  \phi_p(0)e^{-\e_p/(2\sigma_p^2)}- \e_p^{1/2} = 1 + c_p  
\]
with a positive constant $c_p$ which depends only on $p$. This gives $f_p(0) \geq 1 + c_p$, which finishes the proof. 
\end{proof}

\begin{remark}\label{rem:opt-Bp-p>2}
It can be seen again by taking vectors with exactly two nonzero coordinates that the dependence on $\delta(a) = |a-e_1|^2$ in \eqref{eq:st-Bp-p>2} modulo a constant factor is best possible. For instance, take $\e \to 0$ and consider $a_\e = (\sqrt{1-\e}, \sqrt{\e}, 0, \dots, 0)$. Then $\delta_\e = \delta(a_\e) = 2(1-\sqrt{1-\e}) = \e + O(\e^2)$ and 
\[
\frac{\vol_{n-1}(B_p^n \cap a_\e^\perp)}{\vol_{n-1}(B_p^{n-1})} = \left((1-\e)^{p/2} + \e^{p/2}\right)^{-1/p} = 1 + \frac{1}{2}\e + O(\e^{p/2}) = 1 + \frac{1}{2}\delta_\e + o(\delta_\e),
\]
since $p > 2$.
\end{remark}

\section{Conclusion}\label{sec:conclusion}

Our result of Theorem \ref{thm:p=1}  confirms the intuition that the (unknown) extremal subspaces for minimal-volume central sections of $B_p^n$, $0 < p < 2$, are conceivably \emph{as symmetric as possible}. Note that in the case of the corresponding question for maximal-volume sections and $p>2$, the situation is more delicate, at least for \emph{large} $p$, as suggested by Ball's results (even in the hyperplane case).

It has been elusive how to extend the arguments from Section \ref{sec:p=1,k=2} to other values of $p$ than $p=1$, or higher dimensions $k$ than $k=2$. We conjecture that when $k=2$, the minimising subspace $H$ is the same as in Theorem \ref{thm:p=1} for all $0 < p < 2$.

Theorem \ref{thm:stab} deals only with the case of hyperplane sections. It would be of interest to ask for corresponding stability results for lower dimensional sections. We believe that (at least some of) our methods are robust enough to yield satisfactory answers. Another challenging and intriguing question is that of a sharp dependence on $p$ of the constants $c_p$ in Theorem \ref{thm:stab}.

\end{document}